\newtheorem{theorem}{Theorem}[section]
\newtheorem{lemma}[theorem]{Lemma}
\newtheorem{proposition}[theorem]{Proposition}
\newtheorem{corollary}[theorem]{Corollary}
\theoremstyle{definition}
\newtheorem{definition}[theorem]{Definition}
\newtheorem{example}[theorem]{Example}
\newtheorem{remark}[theorem]{Remark}
\newcounter{mycount}
\newcommand{\myref}[1]{\hyperref[#1]{#1}}
\DeclareMathAlphabet{\mcal}{OMS}{zplm}{m}{n}
\def\F{\mathcal{F}}
\def\G{\mathcal{G}}
\def\C{\mathcal{C}}
\def\W{\mathcal{W}}
\def\sleq{\sqsubseteq}
\def\DL{{\sf DL}}
\def\up{{\uparrow}}
\def\down{{\downarrow}}
\begin{document}

\title{Remarks on hyperspaces for Priestley spaces}
\author{G.~Bezhanishvili, J.~Harding, P.~J.~Morandi}
\address{Department of Mathematical Sciences\\
New Mexico State University\\
Las Cruces, NM 88003\\
USA}
\email{guram@nmsu.edu, jharding@nmsu.edu, pamorand@nmsu.edu}

\subjclass[2010]{03B45; 54B20}
\keywords{Hyperspace; hit-or-miss topology; Vietoris space; coalgebra; Priestley space; distributive lattice; positive modal logic}

\begin{abstract}
The Vietoris space of a Stone space plays an important role in the coalgebraic approach to modal logic. When generalizing this to positive modal logic, there is a variety of relevant hyperspace constructions based on various topologies on a Priestley space and mechanisms to topologize the hyperspace of closed sets. A number of authors considered hyperspaces of Priestley spaces and their application to the coalgebraic approach to positive modal logic. A mixture of techniques from category theory, pointfree topology, and Priestley duality have been employed. Our aim is to provide a unifying approach to this area of research relying only on a basic familiarity with Priestley duality and related free constructions of distributive lattices.
\end{abstract}

\maketitle

\section{Introduction}

For a topological space $X$, a topology on the set of closed sets of $X$ is called a {\em hyperspace topology}. The classical examples are the Hausdorff distance and its generalization to the Vietoris space of a compact Hausdorff space (see e.g. \cite[Ch.~III-4]{Joh82}). Hyperspace constructions are a standard technique in topology \cite{Eng89} and play an important role in domain theory and theoretical computer science for modeling of nondeterministic choice and parallel processing \cite{GHKLMS03}. 
There are three classic constructions of powerdomains: the Smyth, Hoare, and Plotkin powerdomains (see, e.g., \cite[Ch.~IV-8]{GHKLMS03}). Smyth \cite{Smy83} pointed out a close connection of these three powerdomain constructions and hyperspace constructions in topology. On the other hand, Hennessy and Plotkin \cite{HP79} proved that powerdomain constructions are left adjoints to appropriate forgetful functors, and Abramsky \cite{Abr91} (see also \cite{AJ94}) provided connection to logic and Stone duality. 

The above three powerdomain constructions are closely related to hyperspace constructions for Priestley spaces, a type of ordered topological spaces that by Priestley duality \cite{Pri70,Pri72} are dual to bounded distributive lattices. There are several natural hyperspaces that one can associate with a Priestley space,
which are motivated by the coalgebraic approach to modal logic. Most standard of these is the so-called ``convex Vietoris space'' of a Priestley space, studied by a number of authors 
(see e.g.~\cite{Pal04,BK07,BKR07,VV14}). There are close links between this construction and the construction of the ``Vietoris frame" in pointfree topology \cite{Joh82,Joh85,Vic89}. Indeed, if $(X,\pi,\leq)$ is the Priestley dual of a bounded distributive lattice $L$, then it is shown in 
\cite{BK07,BKR07,VV14} that the convex Vietoris space of this Priestley space is the dual Priestley space of a bounded distributive lattice constructed from $L$ in a manner similar to the Vietoris frame construction in pointfree topology. Consequently, 
proofs in these papers rely on results from frame theory \cite{Joh85,Vic89}. 

The convex Vietoris space has also been studied from the perspective of bitopological spaces (see \cite{Lau15} and \cite[Ch.~4]{Jakl2017}). In particular, it is pointed out in \cite[Rem.~4.5.6]{Jakl2017} that a direct proof can be designed, avoiding the machinery of pointfree topology, by working with special pairs $(F,I)$ of filters and ideals of a distributive lattice that were also studied by Massas \cite{Massas2016}. Such pairs are a key ingredient in our treatment. Since they provide a generalization of prime filters and their complementary prime ideals, we call them weakly prime pairs (see below). 

Our purpose here is to provide a systematic self-contained account of various hyperspace constructions related to Priestley spaces. This includes the convex Vietoris space described above, as well as other hyperspaces arising from the hit, miss, and hit-or-miss topologies on the closed sets of each of the three natural topologies associated with a Priestley space: the Stone topology, the topology of open upsets, and the topology of open downsets. 

If $(X,\pi,\leq)$ is the Priestley space of $L$, we describe the corresponding hyperspaces concretely in terms of topologies on various sets of ideals and filters of $L$. Each of these hyperspaces is a Priestley space, or its close relative a spectral space, hence has a dual distributive lattice. These dual lattices are usually described freely, \`{a}~la Johnstone \cite{Joh85}. Here we also give their concrete descriptions in terms of $L$. The results obtained are used to give coalgebraic proofs of various dualities for positive modal logic. 

Many of these results exist in the literature in different forms (see, e.g., \cite{Pal04,BK07,BKR07,VV14,Lau15,Jakl2017}). 
Our approach provides a new perspective on the hyperspace constructions for Priestley spaces, gives direct descriptions of various free constructions, and fills some gaps in the literature. We also provide a unified and easily accessible treatment of the wide range of results related to hyperspaces of Priestley spaces.

The hit-or-miss topology \cite{Mat75} of a topological space $X$ is the topology on the set $\F$ of closed sets of $X$ having as subbasis all sets of the form 
\begin{align*}
\Diamond_U=&\, \{C\in\F\mid C\cap U\neq\varnothing\} \\
\Box_K=&\,\{C\in\F \mid C\cap K=\varnothing\} 
\end{align*}
where $U$ is open and $K$ is compact. When applied to a compact Hausdorff space $X$, where compact sets are the complements of opens, this yields the Vietoris space of $X$ (see e.g.~\cite[Ch.~III-4]{Joh82}). 
The hit-or-miss topology on $\F$ can be naturally viewed in three pieces.  The topology generated by the sets $\Diamond_U$ where $U$ is open in $X$ is the hit topology on $\F$; the topology generated by the $\Box_K$ where $K$ is compact in $X$ is the miss topology; and their join is the hit-or-miss topology. 

A Priestley space $(X,\pi,\leq)$ carries three natural topologies: the Stone topology $\pi$, and the spectral topologies $\pi^+$ and $\pi^-$ of open upsets and open downsets. The hit-or-miss topology applied to a locally compact $T_0$-space is a Priestley space under set inclusion \cite{Fel62,Hof85}. In particular, the hit-or-miss topology applied to the spectral spaces $\pi^+,\pi^-$ is Priestly. The hit and the miss topologies applied to $\pi^+,\pi^-$ are spectral. 

Suppose $(X,\pi,\leq)$ is the Priestley space of a bounded distributive lattice $L$. The hit-or-miss hyperspaces constructed from $\pi^+,\pi^-$ are Priestley spaces and are order-homeomorphic to ones constructed in a simple way from the ideals and from the filters of $L$. They are the Priestley spaces of lattices we call $L^\Box$ and $L^\Diamond$. The associated hit and miss topologies constructed from $\pi^+,\pi^-$ are the corresponding spectral spaces. The lattices $L^\Box$ and $L^\Diamond$ are realized via free constructions, modulo certain relations, over $L$ (see, e.g., \cite{BKR07}). We describe them concretely as sublattices of the powerset of $L$. 

One can also consider the hit-or-miss topology constructed from the Stone topology $\pi$. 
Here little new is gained since $\pi$ is the topology arising from the dual of the free Boolean extension $B_L$ of $L$. So this situation falls under those previously considered. A reason this case is less interesting is that the order of the Priestley space $(X,\pi,\leq)$ that was used to form the topologies $\pi^+,\pi^-$, and so is implicitly used in constructing the hit-or-miss topologies for these spaces, no longer plays a role in constructing the hit-or-miss topology on the hyperspace of $(X,\pi)$. 

A natural candidate to incorporate the order of the Priestley space in constructing the hit-or-miss topology on the hyperspace of $(X,\pi)$ is to use the Egli-Milner order $\sqsubseteq$ on the closed sets $\F$ (see e.g.~\cite{Smy91}). The Egli-Milner order is in general a quasi-order. There are two paths to obtain a partial ordering from it. 
The first is the usual approach to the Vietoris space of a Priestley space \cite{BK07,BKR07,VV14}, restricting $\sqsubseteq$ to the set $\C$ of convex closed sets where it is a partial order, then equipping $\C$ with an appropriate topology. The second \cite{Pal04} is to take the quotient $\F/{\sim}$ by the equivalence relation $\sim$ obtained from the quasi-order $\sqsubseteq$. With the quotient topology, $\F/{\sim}$ is a Priestley space. We show that the approaches of \cite{BK07,BKR07,VV14} and \cite{Pal04} are equivalent and that the topology used in the usual Vietoris construction \cite{BK07,BKR07,VV14} can be strictly weaker than the subspace topology. This answers a question raised in \cite{Pal04}. 

Let $L^{\Diamond\Box}$ be the bounded distributive lattice whose Priestley space is order-homeomorphic to the Vietoris space $\C$. In \cite{BK07,VV14} Johnstone's construction \cite{Joh85} of the Vietoris frame was used to show that $L^{\Diamond\Box}$ can be realized via a free construction over $L$. In \cite{BKR07} this result was obtained by a different approach, making use of \cite[Thm.~11.4.4]{Vic89}. Here we follow the path suggested in \cite[Rem.~4.5.6]{Jakl2017} and prove this result directly 
in terms of special pairs $(F,I)$ of a filter $F$ and an ideal $I$ of $L$, generalizing the pairs consisting of a prime filter and its complementary prime ideal. 
We also provide a concrete realization of $L^{\Diamond\Box}$ as a sublattice of the powerset of $B_L$ and describe it as an adjoint construction. 

Since Abramsky's pioneering work \cite{Abr88}, various connections between the hyperspace constructions in topology and duality theory in modal logic have been discovered. In particular, in  \cite{Abr88,KKV04} the Vietoris endofunctor on Stone spaces is used to provide a coalgebraic proof of J\'onsson-Tarski duality, which is so fundamental in modal logic. There are various generalizations of J\'onsson-Tarski duality to distributive lattices with operators \cite{Gol89,CLP91,Dun95,Pet96,Har98,CJ99}. Several authors \cite{Pal04,BK07,BKR07,VV14,Lau15,Jakl2017} have considered hyperspace endofunctors on Priestley and spectral spaces to provide coalgebraic proofs of these generalizations. We provide a brief account of these coalgebraic treatments from the unified perspective developed here.

As the first step, the various hyperspace constructions considered here together with their associated free constructions yield pairs of endofunctors on the categories $\sf PS$ of Priestley spaces and $\sf DL$ of bounded distributive lattices.
These pairs of endofunctors commute with the usual contravariant functors of Priestley duality, yielding that 
the associated categories of algebras and coalgebras for matching pairs of endofunctors are dually equivalent. 

Coalgebras for the endofunctor on $\sf PS$ that takes $X$ to the Priestley space of its convex closed sets $\C$ can be seen as Priestley spaces with a binary relation satisfying certain continuity conditions. The duality between these categories of algebras and coalgebras then yields a coalgebraic proof of Celani-Jansana duality \cite{CJ99} (see also \cite{Har98}).
Similar dualities are obtained using algebras and coalgebras for the endofunctors associated to the free constructions $L^\Diamond$ and $L^\Box$ on ${\sf DL}$ and the upper and lower Vietoris endofunctors on $\sf PS$. This gives a coalgebraic proof of Goldblatt duality \cite{Gol89} (see also \cite{CLP91,Pet96}). 

The paper is organized in the following way. The second section provides preliminaries. The third treats hyperspace topologies of spectral spaces. The fourth treats hyperspace topologies for Priestley spaces, both the essentially trivial version that ignores order, and the more interesting version involving the Egli-Milner order and convex sets. The fifth and final section gives coalgebraic treatment of dualities described above.

\section{Preliminaries} \label{sec: prelims}

\begin{definition}
For a topological space $X$ with closed sets $\F$, define for each open set $U$ and a compact set $K$ of $X$
\begin{align}
\Diamond_U=&\, \{C\in\F\mid C\cap U\neq\varnothing\} \label{b}\\
\Box_K=&\,\{C\in\F \mid C\cap K=\varnothing\} \label{a}
\end{align}
The topology generated by the sets $\Diamond_U$ where $U$ is open is the \emph{hit topology} on $\F$, the topology generated by the sets $\Box_K$ where $K$ is compact is the \emph{miss topology} on $\F$, and the join of these is the \emph{hit-or-miss topology} on $\F$. These hyperspaces are written $\F^\Diamond$, $\F^\Box$ and $\F^{\Diamond\Box}$. 
\end{definition}

Since $\Box_{K}\cap\Box_{L}=\Box_{K\cup L}$, it is easy to see that the sets $\Box_K$ where $K$ is compact form a basis for the miss topology. But we can do better. Recall (see e.g. \cite[p.~43]{GHKLMS03}) that a subset of a topological space is {\em saturated} if it is the intersection of open sets, and the {\em saturation} $A^s$ of a set $A$ is the intersection of the open sets that contain it. 

\begin{lemma} \label{lem:bases}
The hit topology of $\F$ has as a subbasis the sets $\Diamond_B$ where $B$ ranges over the members of a basis of the topology of $X$, and the miss topology of $\F$ has as a basis the sets $\Box_K$ where $K$ is compact and saturated. 
\end{lemma}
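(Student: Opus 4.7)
My plan is to prove each claim by producing a single algebraic identity that replaces an arbitrary open (respectively, arbitrary compact) by one from the more restricted class, and then verifying that the restricted sets do generate the stated (sub)topology.

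For the hit topology, the key identity is that for any open $U$ written as a union $U=\bigcup_i B_i$ of basic opens, one has
\[
\Diamond_U \;=\; \bigcup_{i}\Diamond_{B_i},
\]
since $C\cap U\neq\varnothing$ exactly when $C$ meets some $B_i$. Hence every generator $\Diamond_U$ of the hit topology is already a union of sets $\Diamond_B$ with $B$ in the given basis, so the collection $\{\Diamond_B:B\text{ basic open}\}$ generates the hit topology as a subbasis. (One cannot in general improve this to a basis because $\Diamond_{B_1}\cap\Diamond_{B_2}$ need not be a union of sets $\Diamond_B$.)

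For the miss topology, the excerpt already observes $\Box_K\cap\Box_L=\Box_{K\cup L}$, so the $\Box_K$ with $K$ compact form a basis; it then suffices to show that each $\Box_K$ equals $\Box_{K'}$ for some compact saturated $K'$. I take $K'=K^s$, the saturation of $K$, and argue as follows. If $C\in\F$ then $X\setminus C$ is open, so the condition $C\cap K=\varnothing$, i.e.\ $K\subseteq X\setminus C$, is by the defining property of $K^s$ equivalent to $K^s\subseteq X\setminus C$, i.e.\ $C\cap K^s=\varnothing$. Thus $\Box_K=\Box_{K^s}$.

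It therefore remains to check that $K^s$ is compact. Given an open cover $\{U_\alpha\}$ of $K^s$, it also covers $K$, which is compact, so there is a finite subcollection whose union $U=U_{\alpha_1}\cup\cdots\cup U_{\alpha_n}$ contains $K$. Since $U$ is open and contains $K$, it contains every point of $K^s$ by the definition of saturation, so the same finite subcollection covers $K^s$. Hence $\{\Box_{K^s}:K\text{ compact}\}=\{\Box_{K'}:K'\text{ compact and saturated}\}$ is a basis for the miss topology. There is no real obstacle here; the only substantive observation is that the closed sets $C$ test openings (not arbitrary sets), which is precisely what lets saturation be invoked.
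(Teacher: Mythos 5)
Your proof is correct and follows essentially the same route as the paper's: the identity $\Diamond_U=\bigcup_i\Diamond_{B_i}$ for the hit topology, and the replacement $\Box_K=\Box_{K^s}$ for the miss topology, using that a closed set is disjoint from $K$ iff it is disjoint from $K^s$ and that $K^s$ is compact. You merely spell out the compactness of $K^s$, which the paper states without proof.
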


\begin{proof}
The first statement follows from the observation that if $U=\bigcup_I U_i$, then $\Diamond_U=\bigcup_I\Diamond_{U_i}$. The second statement follows from the observations that a closed set is disjoint from a set $A$ iff it is disjoint from its saturation $A^s$ and that the saturation of a compact set is compact. So $\Box_K$ is equal to $\Box_{K^s}$ and $K^s$ is compact and saturated. 
\end{proof} 

We will apply these notions in the setting of Priestley spaces and spectral spaces. To define these, we first recall that a subset of a topological space $X$ is {\em clopen} if it is both closed and open, that $X$ is {\em zero-dimensional} if clopen subsets of $X$ form a basis, and that $X$ is a {\em Stone space} if $X$ is compact, Hausdorff, and zero-dimensional.

\begin{definition}
For a poset $(X,\le)$ and $S\subseteq X$, let 
\begin{eqnarray*}
{\uparrow}S &=& \{x\in X \mid x\ge s \mbox{ for some } s\in S\} \\
{\downarrow}S &=& \{x\in X \mid x\le s \mbox{ for some } s\in S\}.
\end{eqnarray*} 
If $S=\{s\}$ we simply write ${\uparrow}s$ and ${\downarrow}s$. We call $S$ an {\em upset} if $S={\uparrow}S$ and a {\em downset} if $S={\downarrow}S$. 
\end{definition}

\begin{definition} 
A {\em Priestley space} is a triple $(X,\pi,\leq)$ where $(X,\pi)$ is a compact space and $\le$ is a partial order on $X$ that satisfies the following separation axiom: if $x\nleq y$, then there is a clopen upset $U$ containing $x$ and missing $y$.
\end{definition}

There is a close connection between Priestley spaces and distributive lattices. Let $\sf DL$ be the category of bounded distributive lattices and bounded lattice homomorphisms and let $\sf PS$ be the category of Priestley spaces and continuous order-preserving maps.

\begin{theorem}
There is a contravariant functor $P:{\sf DL}\to{\sf PS}$ with $P(L) = (X,\pi,\leq)$ where $X$ is the set of prime filters of $L$, $\leq$ is the partial ordering of set inclusion, and $\pi$ is the topology generated by sets $a^+,a^-$ for all $a\in L$ where 
\[
a^+=\{x\in X\mid a\in x\} \ \mbox{ and } \ a^-=\{x\in X\mid a\not\in x\}.
\]
For a morphism $f:L\to M$, $P(f)$ is the inverse image map $f^{-1}$. There is another contravariant functor $CU:{\sf PS}\to{\sf DL}$ taking a Priestley space to its lattice of clopen upsets and a morphism $g$ between Priestley spaces to its inverse image map $g^{-1}$ between their lattices of clopen upsets. Together, these contravariant functors provide a dual equivalence. 
\end{theorem}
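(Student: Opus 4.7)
The plan is to verify in turn that (i) $P$ and $CU$ are well-defined on objects, (ii) they are well-defined on morphisms and are contravariant functors, and (iii) there are natural isomorphisms $\eta_L\colon L\to CU(P(L))$ and $\varepsilon_X\colon X\to P(CU(X))$ that together establish the dual equivalence.

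For (i), starting with $P(L)=(X,\pi,\le)$, the separation axiom is immediate: if $x\nleq y$ as prime filters then there is $a\in x\setminus y$, so $a^+$ is a clopen upset separating them. The serious point is compactness of $(X,\pi)$. I would prove this by the Alexander subbase theorem applied to the subbasis $\{a^+\}\cup\{a^-\}$: if $\{a_i^+:i\in I\}\cup\{b_j^-:j\in J\}$ has no common refinement to a prime filter, then using the distributive prime filter theorem (or equivalently Birkhoff/Stone's lemma separating a filter from a disjoint ideal by a prime filter) one extracts a finite inconsistency, namely a finite $I_0\subseteq I$, $J_0\subseteq J$ with $\bigwedge_{I_0} a_i \le \bigvee_{J_0} b_j$, yielding a finite subcover. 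Checking that $\le$ is a partial order is trivial as it is set inclusion.

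For (ii), if $f\colon L\to M$ is a bounded lattice homomorphism then $f^{-1}$ carries prime filters of $M$ to prime filters of $L$, clearly preserves inclusion, and satisfies $(f^{-1})^{-1}(a^+)=f(a)^+$ and similarly for $a^-$, giving continuity on a subbasis. Functoriality of $P$ is then formal. For $CU$, the clopen upsets of a Priestley space form a bounded sublattice of the powerset (closed under $\cap,\cup$ and containing $\varnothing,X$), and for a continuous order-preserving $g\colon X\to Y$ the map $g^{-1}$ sends clopen upsets to clopen upsets and preserves all bounded lattice operations. Contravariance is clear.

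For (iii), define $\eta_L(a)=a^+$ and $\varepsilon_X(x)=\{U\in CU(X)\mid x\in U\}$. That $\eta_L$ is a bounded lattice homomorphism follows directly from the definitions of $a^+$; it is injective because if $a\nleq b$ in $L$ then the prime filter theorem produces a prime filter containing $a$ but not $b$. The crux of the whole proof, and the step I expect to be the main obstacle, is surjectivity of $\eta_L$: every clopen upset $U$ of $P(L)$ has the form $a^+$. Here I would argue that $U$, being clopen, is a finite union of subbasic intersections, and then use the Priestley separation axiom together with compactness to collapse such a finite union into a single $a^+$; the key sublemma is that if $x\in U$ and $y\notin U$ then $x\nleq y$ (since $U$ is an upset), so there is a clopen upset separating them, and a compactness argument on the compact set $X\setminus U$ produces the desired element $a\in L$. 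For $\varepsilon_X$, one checks that $\varepsilon_X(x)$ is indeed a prime filter of $CU(X)$ (distributivity gives primality from the fact that clopen upsets are closed under finite joins), injectivity follows from the Priestley separation axiom (if $x\ne y$ choose them comparable or incomparable and separate by a clopen upset or its complement), and surjectivity uses compactness: a prime filter of $CU(X)$ generates a collection of clopen sets with the finite intersection property whose intersection, by compactness, is nonempty, and Priestley separation forces this intersection to be a single point. Continuity and order-preservation of $\varepsilon_X$ in both directions are verified by computing that $\varepsilon_X^{-1}(U^+)=U$ for $U\in CU(X)$. Naturality of $\eta$ and $\varepsilon$ in $L$ and $X$ respectively is a routine diagram chase on subbasic elements, completing the dual equivalence.
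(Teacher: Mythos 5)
This theorem is the classical Priestley duality; the paper states it in its preliminaries as known background (citing Priestley's original papers) and offers no proof, so there is no in-paper argument to compare yours against. Your outline is the standard textbook proof and is essentially correct: compactness via Alexander's subbase lemma and the prime filter (filter--ideal separation) theorem, continuity of $f^{-1}$ checked on the subbasis via $(f^{-1})^{-1}(a^+)=f(a)^+$, the unit $a\mapsto a^+$ with surjectivity obtained by a double compactness argument on a clopen upset $U$ and its complement, and the counit $x\mapsto\{U\in CU(X)\mid x\in U\}$ with surjectivity from the finite intersection property. One slip worth fixing: in the compactness step the finite inconsistency should read $\bigwedge_{J_0} b_j \le \bigvee_{I_0} a_i$, not $\bigwedge_{I_0} a_i \le \bigvee_{J_0} b_j$. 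A prime filter escapes the cover $\{a_i^+\}\cup\{b_j^-\}$ exactly when it contains every $b_j$ and no $a_i$, so it is the filter generated by the $b_j$ that must meet the ideal generated by the $a_i$; with the inequality as you wrote it, a finite subcover does not follow. Also, in the surjectivity of $\varepsilon_X$ you only need the intersection of the relevant clopen family to be nonempty (any point of it realizes the given prime filter); that it is a singleton then follows from injectivity rather than being needed up front.
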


For a Priestley space, there are three natural topologies of interest: the given topology $\pi$, the topology $\pi^+$ of open upsets, and the topology $\pi^-$ of open downsets. It is well known that $\pi=\pi^+\vee\pi^-$. The topologies $\pi$ arising as the topologies of a Priestley space are exactly Stone topologies. The topologies $\pi^+,\pi^-$ are characterized as spectral spaces, which are described as follows. 

\begin{definition}
A space is {\em sober} if every closed set that cannot be written as a union of two proper closed sets is the closure of a unique point, and it is {\em coherent} if the set of compact open sets is a basis that is a bounded sublattice of the lattice of all open sets. Spaces that are sober and coherent are called {\em spectral spaces}.
\end{definition}

\begin{definition}
For $L\in{\sf DL}$ with Priestley space $P(L)=(X,\pi,\leq)$, we let 
\[
P^+(L)=(X,\pi^+)\quad\mbox{ and }\quad P^-(L)=(X,\pi^-).
\]
\end{definition}

The space $P^+(L)$ has basis $\{a^+\mid a\in L\}$ and $P^-(L)$ has basis $\{a^-\mid a\in L\}$. Both spaces are spectral spaces. If $P(L)=(X,\pi,\leq)$, then using $L^d$ for the order-dual of $L$, we have $P(L^d)$ is order-homeomorphic to $(X,\pi,\geq)$, that $P^+(L^d)$ is homeomorphic to $P^-(L)$, and $P^-(L^d)$ is homeomorphic to $P^+(L)$. We don't retain the ordering from the Priestley space when passing to $P^+(L)$ since it does not play an essential role in our considerations, and it can be recovered from the topology $\pi^+$ as the specialization order. 

\begin{remark}
Historically spectral spaces preceded Priestley spaces. What we call $P^+(L)$ was originally produced by Stone \cite{Sto37b} by applying his famous construction to a bounded distributive lattice rather than a boolean algebra. For this reason $P^+(L)$ is often called the Stone space of $L$. 
\end{remark}

Throughout this note we assume that $(X,\pi,\leq)$ is a Priestley space, and use $\F, \F_+, \F_-$ for the sets of closed sets of the topologies $\pi$, $\pi^+$, and $\pi^-$ respectively. Thus, $\F$ is all closed sets, $\F_+$ is all closed downsets, and $\F_-$ is all closed upsets of $(X,\pi,\leq)$, each ordered by inclusion.

\begin{definition}
For a Priestley space $(X,\pi,\leq)$, we use $\F^\Diamond, \F^\Box, \F^{\Diamond\Box}$ for the hit, miss, and hit-or-miss topologies of $(X,\pi)$. Similarly, $\F_+^\Diamond, \F_+^\Box, \F_+^{\Diamond\Box}$ are these topologies for the topology $\pi^+$ of open upsets, and $\F_-^\Diamond, \F_-^\Box, \F_-^{\Diamond\Box}$ for the topology $\pi^-$ of open downsets.
\end{definition}

\begin{remark}
In \cite{Fel62} Fell generalized the Vietoris construction to produce a compact Hausdorff hyperspace for each locally compact space $X$. He took the closed sets $\F(X)$ and put the hit-or-miss topology on this hyperspace to produce what we call $\F(X)^{\Diamond\Box}$. Since each spectral space is locally compact, the hyperspace topologies $\F_+^{\Diamond\Box}$ and $\F_-^{\Diamond\Box}$ are the result of applying the Fell construction to the spectral spaces $(X,\pi^+)$ and $(X,\pi^-)$ respectively. 
\end{remark}

\begin{remark}
The terms upper and lower Vietoris space are often used in this context (see e.g.~\cite{Law11}). 
For example, for a Priestley space $(X,\pi,\leq)$ our $\F_-^\Box$ is called the upper Vietoris space of $(X,\pi^-)$ since its elements are closed upsets. Similarly, our $\F_+^\Diamond$ is called the lower Vietoris space of $(X,\pi^+)$ since its elements are closed downsets.  
\end{remark}

\section{The hyperspace topologies of a spectral space}\label{spectral}

Throughout this section $L\in {\sf DL}$ with Priestley space $(X,\pi,\leq)$. 
We consider hyperspace topologies on the closed sets $\F_+$ and $\F_-$ of the spectral spaces $P^+(L)=(X,\pi^+)$ and $P^-(L)=(X,\pi^-)$. 
We begin with focus on $\F_-$, the situation for $\F_+$ is similar. Our primary tool will be to realize $\F_-$ with its various topologies in terms of the filter lattice of $L$.

\begin{definition}
Let $\F(L)$ be the set of filters of $L$ ordered by inclusion. 
\end{definition}

For a filter $F$ of $L$ let $C_F$ be the closed upset $\bigcap\{a^+\mid a\in F\}$, and for a closed upset $C$ of $X$ let $F_C$ be the filter $\{a\in L\mid C\subseteq a^+\}$. Similarly, for an ideal $I$ of $L$ let $C_I$ be the closed downset $\bigcap\{a^-\mid a\in I\}$, and for a closed downset $C$ let $I_C$ be the ideal $\{a\in L\mid C\subseteq a^-\}$. The following is well-known (see e.g.~\cite[p.~54]{Pri84} or \cite[p.~385]{BBGK10}). 

\begin{proposition}\label{nmk}
There are mutually inverse dual order-isomorphisms $\Psi:\F_-\to\F(L)$ and $\Gamma:\F(L)\to\F_-$
taking a closed upset $C$ to the filter $\Psi(C)=F_C$ and a filter $F$ to the closed upset $\Gamma(F)=C_F$. Dually, there are order-isomorphisms between $\F_+$ and the ideal lattice $\mathcal I(L)$ taking a closed downset $C$ to the ideal $I_C$ and the ideal $I$ to the closed downset $C_I$.
\end{proposition}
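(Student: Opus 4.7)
The plan is to verify the four things needed: both maps are well defined, they are mutually inverse, and they reverse order; the ideal part follows by dualizing via $L^d$, so I focus on filters and closed upsets.

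First I would check well-definedness. For a filter $F$ of $L$, each $a^+$ is a clopen upset of $(X,\pi,\le)$, so $C_F=\bigcap\{a^+ : a\in F\}$ is automatically a closed upset, i.e.\ a member of $\F_-$. For a closed upset $C$, routine checks show $F_C=\{a\in L : C\subseteq a^+\}$ is a filter: it contains $1$ (since $1^+=X$); if $C\subseteq a^+$ and $C\subseteq b^+$, then $C\subseteq a^+\cap b^+=(a\wedge b)^+$; and $a\le b$ gives $a^+\subseteq b^+$, yielding upward closure. Order-reversal is also immediate: $F_1\subseteq F_2$ makes $C_{F_2}$ an intersection over a larger index family, hence contained in $C_{F_1}$, and $C_1\subseteq C_2$ clearly gives $F_{C_2}\subseteq F_{C_1}$. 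So the only real content is the pair of identity composites.

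The first composite: $F_{C_F}=F$ for every filter $F$. The inclusion $F\subseteq F_{C_F}$ is by definition of $C_F$. For the reverse, I argue contrapositively: if $a\notin F$, the standard prime filter existence theorem (a consequence of distributivity and Zorn's lemma) produces a prime filter $P\supseteq F$ with $a\notin P$. Then $P\in C_F$ (since $b\in F$ implies $b\in P$, i.e., $P\in b^+$), while $P\notin a^+$. Hence $C_F\not\subseteq a^+$, so $a\notin F_{C_F}$.

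The second composite: $C_{F_C}=C$ for every closed upset $C$. The inclusion $C\subseteq C_{F_C}$ is immediate from the definitions. The reverse is the main obstacle and is where Priestley separation together with compactness enter. Given $x\notin C$, I would show there is a clopen upset $U\supseteq C$ with $x\notin U$. Since $C$ is an upset, for each $y\in C$ we have $y\not\le x$, so Priestley separation yields a clopen upset $U_y\ni y$ missing $x$. The family $\{U_y : y\in C\}$ covers the closed (hence compact) set $C$ in $(X,\pi)$, so finitely many suffice, and their union $U$ is the desired clopen upset. Writing $U=a^+$ for the corresponding $a\in L$ (via the duality between $L$ and the clopen upsets of $P(L)$), we get $a\in F_C$ and $x\notin a^+$, so $x\notin C_{F_C}$. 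This completes the filter case; the ideal case is then obtained by applying the filter version to $L^d$, using that $P^+(L^d)$ is homeomorphic to $P^-(L)$ as noted earlier.
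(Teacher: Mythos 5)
Your proof is correct and complete. The paper does not actually prove Proposition~\ref{nmk} --- it records the statement as well-known and points to the literature (Priestley's survey and the paper of Bezhanishvili et al.) --- so there is no in-text argument to compare against; your argument, using the prime filter separation theorem to get $F_{C_F}=F$ and Priestley separation plus compactness of $C$ to get $C_{F_C}=C$, with the ideal half obtained by passing to $L^d$, is exactly the standard proof found in those references.
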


Since $\F_-$ and $\F(L)$ are in bijective correspondence, any topology on $\F_-$ can be moved to one on $\F(L)$ so as to make the correspondence 
a homeomorphism. In particular, we can move the hit, miss, and hit-or-miss topologies to ones on $\F(L)$.

\begin{definition}
For $a\in L$ let $a^\uparrow=\{F\in\F(L)\mid a\in F\}$ and $a^\downarrow = \{F\in\F(L)\mid a\not\in F\}$. 
\end{definition}

Note that for $a=a_1\wedge\cdots\wedge a_n$ we have $a^\uparrow=a_1^\uparrow\cap\cdots\cap a_n^\uparrow$ and $a^\downarrow = a_1^\downarrow\cup\cdots\cup a_n^\downarrow$.

\begin{proposition}\label{prop: filter space}
The set $\F(L)$ of filters of $L$ is a Priestley space under the topology generated by $\{a^\uparrow,a^\downarrow \mid a\in L\}$ and the partial ordering of set inclusion of filters. The topology of open upsets of this Priestley space is generated by the basis $\{a^\uparrow \mid a\in L\}$ and that of open downsets is generated by the subbasis $\{a^\downarrow \mid a\in L\}$. 
\end{proposition}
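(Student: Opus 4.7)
The plan is to realize $\F(L)$ as a closed subspace of $2^L$, deduce it is a Stone space, verify Priestley separation for set inclusion, and then pin down the upper and lower topologies via a principal-filter argument.

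First I would embed $\iota : \F(L) \to 2^L$ by $F \mapsto \chi_F$. The subbasic clopens $\{S \subseteq L : a \in S\}$ and $\{S \subseteq L : a \notin S\}$ of the product topology on $2^L$ (with $2$ discrete) pull back along $\iota$ to $a^\uparrow$ and $a^\downarrow$, so the topology of the statement is precisely the subspace topology from $2^L$. Being a filter is cut out by a family of closed conditions: $1 \in F$, the implication $(a \in F) \wedge (b \in F) \Rightarrow (a \wedge b) \in F$ for each $a,b \in L$, and $(a \in F) \wedge (a \leq b) \Rightarrow b \in F$ for each $a \leq b$. Each such condition defines a closed subset of $2^L$, so $\iota[\F(L)]$ is closed. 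Since $2^L$ is a Stone space by Tychonoff, so is $\F(L)$. Priestley separation is immediate: if $F \not\subseteq G$, pick $a \in F \setminus G$; then $a^\uparrow$ is a clopen upset separating $F$ from $G$.

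Next I would identify the topology $\pi^+$ of open upsets. Each $a^\uparrow$ is already a clopen upset, so the topology generated by $\{a^\uparrow\}$ is contained in $\pi^+$. Since clopen upsets form a basis for $\pi^+$ in every Priestley space, for the reverse inclusion it suffices to express each clopen upset $U$ as a union of $a^\uparrow$'s. Given $F \in U$, openness yields a basic neighborhood $V = a_1^\uparrow \cap \cdots \cap a_n^\uparrow \cap b_1^\downarrow \cap \cdots \cap b_m^\downarrow \subseteq U$, with $a_i \in F$ and $b_j \notin F$. Setting $a = a_1 \wedge \cdots \wedge a_n$, the principal filter ${\uparrow}a$ lies in $V$: each $a_i \geq a$ places ${\uparrow}a$ in $a_i^\uparrow$, while $b_j \geq a$ would force $b_j \in F$, contradicting $F \in b_j^\downarrow$. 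Since $a^\uparrow = \{F' \in \F(L) : F' \supseteq {\uparrow}a\}$ and $U$ is an upset, the membership ${\uparrow}a \in U$ gives $a^\uparrow \subseteq U$; and $F \in a^\uparrow$ because $a \in F$.

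Finally, the open-downset topology $\pi^-$ follows by complementation. In any Priestley space the closed upsets are intersections of clopen upsets, and clopen upsets are compact, so by the previous step each is a finite union of $a^\uparrow$'s. Taking complements expresses every open downset as a union of finite intersections of the sets $a^\downarrow = \F(L) \setminus a^\uparrow$, so $\{a^\downarrow : a \in L\}$ is a subbasis for $\pi^-$. The main obstacle I anticipate is spotting the principal-filter trick in the $\pi^+$ step; once one sees that the smallest filter containing $a$ is the right witness inside $V$, the rest is automatic from the filter axioms.
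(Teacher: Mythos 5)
Your proof is correct, and while it follows the same overall outline as the paper's (compactness plus Priestley separation, then identification of $\pi^+$ and $\pi^-$), it substitutes a genuinely different technique at each of the three key steps. For compactness the paper applies Alexander's subbase lemma directly: given a subbasic cover by sets $a^\uparrow$ ($a\in P$) and $b^\downarrow$ ($b\in Q$), the filter generated by $Q$ avoids all the $b^\downarrow$ and so lies in some $a^\uparrow$, which forces $b_1\wedge\cdots\wedge b_n\le a$ for finitely many $b_i\in Q$ and yields a finite subcover; your closed-embedding into $2^L$ plus Tychonoff is an equally standard alternative and has the minor bonus of giving zero-dimensionality for free. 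The most interesting divergence is in identifying $\pi^+$: the paper covers the complement of a clopen upset $U$ by sets $a_G^\downarrow$ (choosing $a_G\in F\setminus G$ for each $G\notin U$) and uses compactness of $U^c$ to place $F$ inside a finite intersection of $a^\uparrow$'s contained in $U$, whereas your principal-filter witness ${\uparrow}(a_1\wedge\cdots\wedge a_n)$ -- the least element of the basic neighborhood $V$ -- avoids compactness altogether and exhibits $U$ directly as a union of single sets $a^\uparrow$; this exploits the special feature of $\F(L)$ that every basic open set has a least member, and is arguably the cleaner argument. Finally, for $\pi^-$ the paper simply runs the covering argument symmetrically on a clopen downset, while you derive it from the $\pi^+$ case by writing a closed upset as an intersection of clopen upsets (each a finite union of $a^\uparrow$'s, by compactness) and complementing; this is valid but leans on an extra standard fact, so it is not shorter than the paper's symmetric argument, just a different bookkeeping of the same information.
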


\begin{proof}
Clearly the subbasis $\{a^\uparrow,a^\downarrow \mid a\in L\}$ consists of clopen sets since $a^\uparrow$ and $a^\downarrow$ are complements for each $a\in L$. If $F,G$ are filters and $F\not\subseteq G$, then there is $a\in F\setminus G$. So $F$ and $G$ are separated by the clopen sets $a^\uparrow$ and $a^\downarrow$. Suppose $\F(L)=\bigcup\{ a^\uparrow \mid a \in P \} \cup \bigcup\{ b^\downarrow \mid b \in Q\}$ is a cover by sets in the subbasis. Let $F$ be the filter generated by $Q$. Then $F\notin\bigcup\{ b^\downarrow \mid b\in Q\}$. Therefore, $F\in a^\uparrow$ for some $a\in P$. Then $a\in F$, so there are $b_1,\ldots,b_n\in Q$ with $b_1\wedge\cdots\wedge b_n\leq a$. This yields that $a^\uparrow,b_1^\downarrow,\dots,b_n^\downarrow$ is a finite subcover of $\mathcal{F}(L)$. It follows from Alexander's subbase lemma that $\mathcal{F}(L)$ is compact, and hence a Priestley space. 

Since $\mathcal{F}(L)$ is a Priestley space, the clopen upsets are a basis for the topology of open upsets. Suppose $U$ is a clopen upset and $F\in U$. For each $G\not\in U$ there is $a_{G}\in F\setminus G$. Then $\{a_{G}^\downarrow \mid G\not\in U\}$ covers the complement $U^c$. Since $U$ is clopen, $U^c$ is closed and hence compact since $\mathcal{F}(L)$ is compact. So there are $G_1,\ldots,G_n\not\in U$ with $U^c\subseteq a_{G_1}^\downarrow\cup\cdots\cup a_{G_n}^\downarrow$. Thus, $F\in a_{G_1}^\uparrow\cap\cdots\cap a_{G_n}^\uparrow\subseteq U$. Since each $a^\uparrow$ is an open upset, it follows that $\{a^\uparrow\mid a\in L\}$ is a subbasis for the topology of open upsets. A symmetric argument considering a clopen downset $V$ and $G\in V$ shows that $\{a^\downarrow\mid a\in L\}$ is a subbasis for the topology of open downsets. Finally, since $\{a^\uparrow\mid a\in L\}$ is closed under finite intersections, this subbasis is indeed a basis. 
\end{proof}

\begin{lemma} \label{rew}
For $a\in L$, the images of the subsets $a^\uparrow$ and $a^\downarrow$ under $\Gamma:\F(L)\to\F_-$ are given by $\Gamma(a^\uparrow)=\Box_{a^-}$ and $\Gamma(a^\downarrow)=\Diamond_{a^-}$. 
\end{lemma}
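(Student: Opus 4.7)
The plan is straightforward: unfold both identities to statements about membership and reduce them to the inversion $F=F_{C_F}$ provided by Proposition \ref{nmk}. There is no genuine topology or order-theoretic content at this point; the entire content is a rephrasing of the correspondence of Proposition \ref{nmk} in the language of $\Box$ and $\Diamond$.

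For the first identity $\Gamma(a^\uparrow)=\Box_{a^-}$, I would argue as follows. By definition, $F\in a^\uparrow$ precisely when $a\in F$, and $\Gamma(F)=C_F\in\Box_{a^-}$ precisely when $C_F\cap a^-=\varnothing$. Since $a^-$ and $a^+$ are complements in $X$, the latter is equivalent to $C_F\subseteq a^+$. Thus it suffices to verify
\[
a\in F\quad\Longleftrightarrow\quad C_F\subseteq a^+.
\]
The forward direction is immediate from the definition $C_F=\bigcap\{b^+\mid b\in F\}$. For the reverse direction, Proposition \ref{nmk} gives $F=F_{C_F}=\{b\in L\mid C_F\subseteq b^+\}$, so $C_F\subseteq a^+$ forces $a\in F$.

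For the second identity $\Gamma(a^\downarrow)=\Diamond_{a^-}$, I would simply appeal to complementation. By definition $a^\downarrow$ is the complement of $a^\uparrow$ in $\F(L)$, and $\Diamond_{a^-}$ is the complement of $\Box_{a^-}$ in $\F_-$ since ``meets $a^-$'' is the negation of ``misses $a^-$.'' Because $\Gamma$ is a bijection by Proposition \ref{nmk},
\[
\Gamma(a^\downarrow)=\F_-\setminus\Gamma(a^\uparrow)=\F_-\setminus\Box_{a^-}=\Diamond_{a^-}.
\]

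The only point that deserves a passing remark is that $a^-$ is a legitimate argument for both $\Box$ and $\Diamond$ in the hyperspace of $(X,\pi^-)$: it is a basic open of $\pi^-$ by definition of the spectral topology, and it is closed in the compact space $(X,\pi)$ hence compact in the coarser $\pi^-$. Beyond this bookkeeping, there is no obstacle; the statement is just the translation of Proposition \ref{nmk} along the hit-and-miss description.
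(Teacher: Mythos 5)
Your proof is correct and follows essentially the same route as the paper's: both reduce $\Gamma(F)\in\Box_{a^-}$ to $C_F\subseteq a^+$ and then to $a\in F$ via the inversion $\Psi\Gamma=\mathrm{id}$ from Proposition~\ref{nmk}, and both obtain the $\Diamond$ identity by complementation under the bijection $\Gamma$. Your closing remark that $a^-$ is $\pi^-$-open and compact, so that $\Box_{a^-}$ and $\Diamond_{a^-}$ are legitimately formed, is a small bookkeeping point the paper leaves implicit.
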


\begin{proof}
Let $F$ be a filter of $L$. We have 
$\Gamma(F)\in\Box_{a^-}$ is equivalent to $\Gamma(F)\cap a^-=\varnothing$, which is equivalent to $\Gamma(F)\subseteq a^+$, and thus to $a\in\Psi\Gamma(F)$. Since $\Psi\Gamma$ is the identity, $\Gamma(F)\in\Box_{a^-}$ is therefore equivalent to $a\in F$. So $F\in\Gamma^{-1}(\Box_{a^-})$ iff $F\in a^\uparrow$. The second statement follows because $a^\uparrow$ and $a^\downarrow$ are complements, $\Box_{a^-}$ and $\Diamond_{a^-}$ are complements, and $\Gamma$ is a bijection.
\end{proof}

\begin{lemma} \label{sew}
$\F_-^\Diamond$ has as a subbasis the sets $\Diamond_{a^-}$ where $a\in L$, and $\F_-^\Box$ has as a basis the sets $\Box_{a^-}$ where $a\in L$.
\end{lemma}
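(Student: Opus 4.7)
The plan is to handle the two claims separately. The first is essentially a direct application of Lemma~\ref{lem:bases}: since $\{a^- : a \in L\}$ is a basis for the topology $\pi^-$ of $P^-(L)$, the lemma immediately gives that $\{\Diamond_{a^-} : a \in L\}$ is a subbasis for the hit topology $\F_-^\Diamond$. This takes one line.

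For the miss topology, my plan is to verify that $\{\Box_{a^-} : a \in L\}$ is both contained in $\F_-^\Box$ and closed under finite intersections. Containment follows because each $a^-$ is a basic compact open of the spectral space $P^-(L)$; closure under finite intersections follows from the general identity $\Box_K \cap \Box_{K'} = \Box_{K \cup K'}$ together with the computation $a^- \cup b^- = (a \wedge b)^-$, giving $\Box_{a^-} \cap \Box_{b^-} = \Box_{(a \wedge b)^-}$. So $\{\Box_{a^-}\}$ is already a basis for the topology it generates, and that topology is contained in $\F_-^\Box$.

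The real work is the reverse inclusion: every basic miss-open $\Box_K$, with $K$ compact in $\pi^-$, should be a union of sets $\Box_{a^-}$. Given $C \in \Box_K$, that is, a closed upset $C$ disjoint from $K$, I aim to produce $a \in L$ with $C \in \Box_{a^-} \subseteq \Box_K$, equivalently with $C \subseteq a^+$ and $K \subseteq a^-$. The key step is to invoke the standard Priestley identity $C = \bigcap\{a^+ : a \in F_C\}$ (implicit in Proposition~\ref{nmk}), which yields the open cover $K \subseteq C^c = \bigcup\{a^- : a \in F_C\}$ in $\pi^-$. Compactness of $K$ reduces this to a finite subcover $a_1^-, \ldots, a_n^-$, and setting $a = a_1 \wedge \cdots \wedge a_n$ gives $a \in F_C$ because $F_C$ is a filter, whence $C \subseteq a^+$, and also $K \subseteq a_1^- \cup \cdots \cup a_n^- = a^-$, as required.

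The step I expect to take the most thought is recognizing this reduction, namely that compactness of $K$ in $\pi^-$ together with the representation of the closed upset $C$ as an intersection of basic clopen upsets lets one collapse finitely many separating witnesses into a single element of $L$ via the filter operation. Once that move is identified, everything else is formal bookkeeping.
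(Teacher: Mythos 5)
Your proof is correct, and the first half (the hit topology) coincides with the paper's one-line application of Lemma~\ref{lem:bases}. For the miss topology you take a genuinely different, though closely related, route. The paper first uses the second half of Lemma~\ref{lem:bases} to reduce to $\Box_K$ with $K$ compact \emph{saturated} in $\pi^-$, invokes the fact that such $K$ are exactly the closed downsets of $\pi$, writes $K=\bigcap\{a^-\mid K\subseteq a^-\}$ as a down-directed intersection, and then applies the finite intersection property to the $\pi$-closed set $C$ inside the compact space $(X,\pi)$ to conclude $\Box_K=\bigcup\{\Box_{a^-}\mid K\subseteq a^-\}$; in short, compactness is applied to $C$ via the ambient Stone topology. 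You instead work with an arbitrary compact $K$ of $\pi^-$, cover it by $C^c=\bigcup\{a^-\mid a\in F_C\}$ using the representation $C=\bigcap\{a^+\mid a\in F_C\}$ from Proposition~\ref{nmk}, apply compactness of $K$ in $\pi^-$ to get a finite subcover, and collapse it to a single witness $a=a_1\wedge\cdots\wedge a_n\in F_C$ using $a_1^-\cup\cdots\cup a_n^-=(a_1\wedge\cdots\wedge a_n)^-$; compactness is applied to $K$ directly. Your version buys independence from the characterization of compact saturated subsets of a spectral space as closed downsets of the Priestley topology (the paper's citation of \cite[Thm.~6.2]{BBGK10}) and from the saturation step of Lemma~\ref{lem:bases}, at the mild cost of explicitly verifying closure of $\{\Box_{a^-}\}$ under finite intersections, which the paper gets for free from the directedness of its intersection. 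Both arguments are complete; yours is arguably the more self-contained of the two.
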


\begin{proof}
Since the sets $a^-$ for $a\in L$ are a basis of the topology $\pi^-$, by Lemma~\ref{lem:bases} the topology of $\F_-^\Diamond$ has as a subbasis all sets of the form $\Diamond_{a^-}$ where $a\in L$. This same lemma provides that the topology of $\F_-^\Box$ has as a basis all $\Box_K$ where $K$ is compact and saturated in $\pi^-$. Such $K$ is a closed downset in the $\pi$ topology (see e.g.~\cite[Thm.~6.2]{BBGK10}), so by the dual statement to Proposition~\ref{nmk}, $K=\bigcap\{a^-\mid K\subseteq a^-\}$ and this intersection is down-directed. Any $C\in\F_-$ is closed in $\pi^-$ and hence in $\pi$. Since $X$ is compact under $\pi$, the finite intersection property says that $C$ is disjoint from $K$ iff it is disjoint from some $a^-$ with $K\subseteq a^-$. Thus, $\Box_K=\bigcup\{\Box_{a^-}\mid K\subseteq a^-\}$. This gives that the topology of $\F_-^\Box$ has as a basis all sets of the form $\Box_{a^-}$ where $a\in L$. 
\end{proof}

\begin{theorem}\label{them:F-}
$\Gamma$ is a homeomorphism from $\F(L)$ with the open upset topology to $\F_-^\Box$, a homeomorphism from $\F(L)$ with the open downset topology to $\F_-^{\Diamond}$, and a dual order-homeomorphism from the Priestley space $\F(L)$ to $\F_-^{\Diamond\Box}$.
In particular, $\F_-^\Box$ and $\F_-^\Diamond$ are spectral spaces and $\F_-^{\Diamond\Box}$ is a Priestley space. 
\end{theorem}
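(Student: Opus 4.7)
The plan is to assemble the theorem directly from the three pieces already established: the bijection $\Gamma\colon\F(L)\to\F_-$ from Proposition~\ref{nmk}, the computation of $\Gamma$ on the distinguished (sub)basic sets from Lemma~\ref{rew}, and the explicit (sub)bases for the hit and miss topologies from Lemma~\ref{sew}. Since $\Gamma$ is already known to be a bijection, to prove it is a homeomorphism between two topologies it suffices to check that it sends a subbasis on the source to a subbasis on the target.

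First I would handle the miss topology. Proposition~\ref{prop: filter space} says that $\{a^\uparrow\mid a\in L\}$ is a basis for the open upset topology on $\F(L)$, and Lemma~\ref{sew} says that $\{\Box_{a^-}\mid a\in L\}$ is a basis for $\F_-^\Box$. By Lemma~\ref{rew}, $\Gamma(a^\uparrow)=\Box_{a^-}$, so $\Gamma$ is a homeomorphism from $\F(L)$ with the open upset topology onto $\F_-^\Box$. The symmetric argument using the subbases $\{a^\downarrow\mid a\in L\}$ and $\{\Diamond_{a^-}\mid a\in L\}$ from Proposition~\ref{prop: filter space} and Lemma~\ref{sew}, together with $\Gamma(a^\downarrow)=\Diamond_{a^-}$, shows that $\Gamma$ is a homeomorphism from $\F(L)$ with the open downset topology onto $\F_-^\Diamond$.

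Next I would derive the Priestley statement. The Priestley topology on $\F(L)$ is the join of its open upset and open downset topologies (since $\pi=\pi^+\vee\pi^-$ for any Priestley space), and by definition the hit-or-miss topology $\F_-^{\Diamond\Box}$ is the join of $\F_-^\Diamond$ and $\F_-^\Box$. A bijection that is a homeomorphism for each of two topologies is a homeomorphism for their join, so $\Gamma$ is a homeomorphism $\F(L)\to\F_-^{\Diamond\Box}$. That $\Gamma$ reverses order is the content of Proposition~\ref{nmk}, so $\Gamma$ is a dual order-homeomorphism.

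Finally, for the topological characterizations: $\F(L)$ is a Priestley space by Proposition~\ref{prop: filter space}, so its image $\F_-^{\Diamond\Box}$ under the order-reversing homeomorphism $\Gamma$ is a Priestley space. For the spectral claims, $\F_-^\Box$ and $\F_-^\Diamond$ are homeomorphic to the open upset and open downset topologies of the Priestley space $\F(L)$ respectively, and these are spectral since they coincide with $P^+$ and $P^-$ of the bounded distributive lattice dual to $\F(L)$ (equivalently, of its order dual). I do not expect a real obstacle here; the only point requiring care is bookkeeping the order reversal, so that the open upset topology on $\F(L)$ matches the miss (rather than the hit) topology on $\F_-$, which is exactly what Lemma~\ref{rew} confirms.
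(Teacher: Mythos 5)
Your proposal is correct and follows essentially the same route as the paper: it combines the bijection from Proposition~\ref{nmk}, the identities $\Gamma(a^\uparrow)=\Box_{a^-}$ and $\Gamma(a^\downarrow)=\Diamond_{a^-}$ from Lemma~\ref{rew}, and the (sub)bases from Proposition~\ref{prop: filter space} and Lemma~\ref{sew}, then passes to the join of the two topologies and transfers the Priestley/spectral properties along the dual order-homeomorphism. The only cosmetic difference is that you phrase the join step as a general fact about bijections and joins of topologies, whereas the paper speaks of carrying a subbasis of the Priestley topology to a subbasis of $\F_-^{\Diamond\Box}$; these are the same argument.
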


\begin{proof}
By Proposition~\ref{nmk}, $\Gamma:\F(L)\to\F_-$ is a bijection. By Proposition~\ref{prop: filter space} and Lemmas~\ref{rew} and~\ref{sew}, it takes a basis for the topology of open upsets of $\F(L)$ to a basis of $\F_-^\Box$, and a subbasis for the topology of open downsets to a subbasis of $\F_-^\Diamond$. Therefore, it takes a subbasis of the Priestley topology on $\F(L)$ to a subbasis of $\F_-^{\Diamond\Box}$. Since $\Gamma$ is a dual order-isomorphism by Proposition~\ref{nmk}, the fact that the order-dual of a Priestley space is again a Priestley space yields that $\F_-^{\Diamond\Box}$ is a Priestley space. Since $\F_-^\Box$ and $\F_-^\Diamond$ are the topologies of open upsets and open downsets of $\F_-^{\Diamond\Box}$, they are spectral spaces.
\end{proof}

We next describe the bounded lattices associated to these spaces. Our primary tool will be a certain free construction which generalizes a similar construction in modal logic \cite{KKV04}. An equivalent construction was considered in \cite[Sec.~4.2]{BKR07}. 

Let $\sf MS$ be the category of unital meet-semilattices and unital meet-semilattice homomorphisms. The forgetful functor ${\sf DL}\to{\sf MS}$ has a left adjoint ${\sf MS}\to{\sf DL}$, which can be described as follows.

\begin{definition} \label{def: Lbox}
Let $M \in \sf MS$ and let $M^\Box$ be the bounded distributive lattice freely generated by the set of symbols $\Box_a$ for $a\in M$ modulo the relations $\Box_1=1$ and $\Box_{a\wedge b}=\Box_a\wedge \Box_b$ for all $a,b\in M$. 
We call $M^\Box$ the {\em bounded distributive lattice freely generated by the unital meet semilattice $M$}. 
\end{definition}

For $M \in \sf MS$ call a downset $S$ of $M$ {\em finitely generated} if $S={\downarrow}F$ for some finite $F\subseteq M$. Let $\mathcal D_{{\rm fg}}(M)$ be the set of finitely generated downsets of $M$ ordered by inclusion. It is easily seen that 
${\downarrow}F\cup{\downarrow}G={\downarrow}(F\cup G)$ and ${\downarrow}F\cap{\downarrow}G={\downarrow}\{a\wedge b : a\in F, b\in G\}$. Thus, $\mathcal D_{{\rm fg}}(M)$ is a bounded sublattice of the powerset of $M$. 
The following appears to be folklore. We include a short proof for the convenience of the reader.

\begin{theorem}\label{thm:Dfg}
For $M \in \sf MS$, $M^\Box$ is isomorphic to $\mathcal D_{{\rm fg}}(M)$.
\end{theorem}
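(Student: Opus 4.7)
The plan is to construct an explicit isomorphism $\varphi\colon M^\Box \to \mathcal D_{{\rm fg}}(M)$ together with an explicit inverse $\psi$. First I would observe that the map $a \mapsto {\downarrow}a$ from $M$ into the meet-semilattice reduct of $\mathcal D_{{\rm fg}}(M)$ is a unital meet-semilattice homomorphism, since ${\downarrow}1 = M$ and ${\downarrow}(a \wedge b) = {\downarrow}a \cap {\downarrow}b$. By the universal property of $M^\Box$ from Definition~\ref{def: Lbox}, this assignment extends uniquely to a bounded lattice homomorphism $\varphi\colon M^\Box \to \mathcal D_{{\rm fg}}(M)$ with $\varphi(\Box_a) = {\downarrow}a$. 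Surjectivity follows at once from ${\downarrow}\{a_1, \ldots, a_n\} = {\downarrow}a_1 \cup \cdots \cup {\downarrow}a_n = \varphi(\Box_{a_1} \vee \cdots \vee \Box_{a_n})$.

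For injectivity I would build an inverse directly. The key observation is that $\Box$ is monotone in $M^\Box$: if $a \leq b$ in $M$, then $a = a \wedge b$, so $\Box_a = \Box_a \wedge \Box_b$, hence $\Box_a \leq \Box_b$. This lets me define $\psi\colon \mathcal D_{{\rm fg}}(M) \to M^\Box$ by $\psi({\downarrow}\{a_1,\ldots,a_n\}) = \Box_{a_1} \vee \cdots \vee \Box_{a_n}$. To see that $\psi$ is well-defined, suppose ${\downarrow}F = {\downarrow}G$ for finite $F, G \subseteq M$; each $a \in F$ lies below some $b \in G$, so monotonicity of $\Box$ gives $\bigvee_{a \in F} \Box_a \leq \bigvee_{b \in G} \Box_b$, and by symmetry the two joins coincide.

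Next I would verify that $\psi$ is a bounded lattice homomorphism. Preservation of $\vee$ and the bounds is immediate from the definition, and for meets I use the identity ${\downarrow}F \cap {\downarrow}G = {\downarrow}\{a \wedge b : a \in F,\, b \in G\}$ together with the defining relation $\Box_{a \wedge b} = \Box_a \wedge \Box_b$ and distributivity in $M^\Box$ to compute
\[
\psi({\downarrow}F \cap {\downarrow}G) = \bigvee_{a \in F,\, b \in G} (\Box_a \wedge \Box_b) = \Big(\bigvee_{a \in F} \Box_a\Big) \wedge \Big(\bigvee_{b \in G} \Box_b\Big) = \psi({\downarrow}F) \wedge \psi({\downarrow}G).
\]
Finally, $\varphi\psi$ fixes every finitely generated downset ${\downarrow}\{a_1,\ldots,a_n\}$ on the nose, and $\psi\varphi$ fixes each generator $\Box_a$; since both composites are bounded lattice homomorphisms agreeing with the identity on generating sets, they are the respective identities.

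The only real obstacle is the well-definedness of $\psi$, and it is mild: it rests entirely on the monotonicity of $\Box$ derived from the defining relation $\Box_{a \wedge b} = \Box_a \wedge \Box_b$. Once that observation is made, the remaining verifications are routine.
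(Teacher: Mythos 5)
Your proof is correct, but it is organized differently from the paper's. The paper never works inside the presented lattice $M^\Box$ at all: it shows directly that $\mathcal D_{{\rm fg}}(M)$ \emph{has} the universal property required of $M^\Box$, by taking an arbitrary unital meet-semilattice homomorphism $f:M\to D$ into an arbitrary bounded distributive lattice and extending it to $\overline{f}(S)=\bigvee f(S)$, with the meet-preservation check resting on distributivity in $D$ and the identity ${\downarrow}F\cap{\downarrow}G={\downarrow}\{a\wedge b : a\in F, b\in G\}$. You instead use the universal property of $M^\Box$ only to produce the comparison map $\varphi$ with $\varphi(\Box_a)={\downarrow}a$, and then construct an explicit two-sided inverse $\psi$ by hand, where the real work is the well-definedness of $\psi$ via monotonicity of $\Box$ (correctly derived from $\Box_{a\wedge b}=\Box_a\wedge\Box_b$) and the same distributivity computation, now carried out inside $M^\Box$ itself. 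The two arguments share their key identities, but the paper's version is slightly more economical (one map, one universal-property check, uniqueness for free from the generators) and makes transparent that $\mathcal D_{{\rm fg}}$ realizes the left adjoint on objects; yours has the virtue of exhibiting the inverse concretely, at the modest cost of a well-definedness argument and the final check that both composites fix generating sets. Both are complete proofs.
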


\begin{proof}
The map ${\downarrow}:M\to\mathcal D_{{\rm fg}}(M)$ is a unital meet-semilattice embedding.
Let $f:M\to D$ be a unital meet-semilattice homomorphism to a bounded distributive lattice $D$. It is sufficient to show that there is a unique bounded lattice homomorphism $\overline{f}:\mathcal D_{{\rm fg}}(M)\to D$ such that $\overline{f}({\downarrow}a)=f(a)$ for each $a\in M$.
Let $S\in\mathcal D_{{\rm fg}}(M)$. Then $S={\downarrow}F$ for some finite $F\subseteq S$. 
It follows easily that $\bigvee f(S)=\bigvee f(F)$.
Thus, we may define $\overline{f}:\mathcal D_{{\rm fg}}(M)\to D$ by setting $\overline{f}(S)=\bigvee f(S)$. Clearly $\overline{f}$ preserves finite joins and the bounds. If $S={\downarrow}F$ and $T={\downarrow}G$ for some finite $F,G\subseteq M$, then by distributivity in $D$,
\[
\bigvee f(S)\wedge \bigvee f(T) = \bigvee f(F) \wedge \bigvee f(G) = \bigvee \{ f(a)\wedge f(b) : a\in F,b\in G \}. 
\]
Since $f$ preserves finite meets and $S\cap T={\downarrow}\{a\wedge b : a\in F,b\in G\}$, we have $\overline{f}(S\cap T)=\overline{f}(S)\wedge\overline{f}(T)$. Therefore, $\overline{f}$ is a bounded lattice homomorphism, and clearly $\overline{f}({\downarrow}a)=f(a)$. Since the principal downsets generate $\mathcal D_{{\rm fg}}(M)$ as a lattice, $\overline{f}$ is the unique such bounded lattice homomorphism.
\end{proof}

While the definition of $M^\Box$ is valid for any meet-semilattice $M$, our purpose is to apply it to the meet-semilattice reduct of $L \in \DL$. With this we have the following (cf.~\cite[Thm.~4.10]{BKR07}).

\begin{theorem}\label{thm:dual of Lbox}
The Priestley space $P(L^\Box)$ is order-homeomorphic to $\F(L)$, and thus dually order-homeomorphic to $\F_-^{\Diamond\Box}$. The spectral space $P^+(L^\Box)$ is homeomorphic to $\F_-^{\Box}$, and the spectral space $P^-(L^\Box)$ is homeomorphic to $\F_-^{\Diamond}$.
\end{theorem}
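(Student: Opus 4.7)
The plan is to construct an explicit order-preserving homeomorphism $\Phi : \F(L) \to P(L^\Box)$ and then combine it with the dual order-homeomorphism $\Gamma : \F(L) \to \F_-^{\Diamond\Box}$ from Theorem~\ref{them:F-} to derive all three assertions at once. Throughout I identify $L^\Box$ with $\mathcal{D}_{{\rm fg}}(L)$ via Theorem~\ref{thm:Dfg}, sending each generator $\Box_a$ to ${\downarrow}a$. The idea behind $\Phi$ comes from the adjunction: a prime filter $x$ of $\mathcal{D}_{{\rm fg}}(L)$ is the $1$-preimage of a bounded lattice homomorphism $\mathcal{D}_{{\rm fg}}(L) \to \mathbf{2}$, which by the universal property of $L^\Box$ is determined by the unital meet-semilattice homomorphism $L \to \mathbf{2}$ sending $a \mapsto 1$ iff ${\downarrow}a \in x$; the $1$-preimage of this latter homomorphism is a filter of $L$. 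Unwinding this correspondence yields the candidate
\[
\Phi(F) \;=\; \{S \in \mathcal{D}_{{\rm fg}}(L) : S \cap F \neq \varnothing\}, \qquad \Phi^{-1}(x) \;=\; \{a \in L : {\downarrow}a \in x\}.
\]

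I would then carry out routine verifications: that $\Phi(F)$ is a prime filter of $\mathcal{D}_{{\rm fg}}(L)$ (using ${\downarrow}F_1 \cap {\downarrow}F_2 = {\downarrow}\{a \wedge b : a \in F_1, b \in F_2\}$ for the meet, and the fact that every element of $\mathcal{D}_{{\rm fg}}(L)$ is a finite join of principal downsets for primality); that $\Phi^{-1}(x)$ is a filter of $L$; and that $\Phi$ and $\Phi^{-1}$ are mutually inverse. For order-preservation, $F \subseteq F'$ immediately gives $\Phi(F) \subseteq \Phi(F')$, while conversely any $a \in F \setminus F'$ yields ${\downarrow}a \in \Phi(F) \setminus \Phi(F')$ because $F'$ is an upset. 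For the topology, the Priestley subbasis $\{S^+, S^- : S \in L^\Box\}$ on $P(L^\Box)$ is already generated by the smaller family $\{({\downarrow}a)^+, ({\downarrow}a)^- : a \in L\}$, since each $S$ is a finite join ${\downarrow}a_1 \vee \cdots \vee {\downarrow}a_n$. A short calculation gives $\Phi^{-1}(({\downarrow}a)^+) = a^\uparrow$ and $\Phi^{-1}(({\downarrow}a)^-) = a^\downarrow$, so by Proposition~\ref{prop: filter space} $\Phi$ pulls a subbasis back to a subbasis and is therefore a homeomorphism.

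To conclude, since $\Phi$ is an order-preserving homeomorphism, it identifies $P^+(L^\Box)$ with the open upset topology on $\F(L)$, which by Theorem~\ref{them:F-} is homeomorphic via $\Gamma$ to $\F_-^\Box$; the same reasoning on the downset side gives $P^-(L^\Box) \cong \F_-^\Diamond$, and the composite $\Gamma \circ \Phi^{-1}$ realizes the dual order-homeomorphism $P(L^\Box) \cong \F_-^{\Diamond\Box}$. The main point requiring care is the bookkeeping of orders and topologies: $\Gamma$ reverses inclusion while $\Phi$ preserves it, so one must check carefully that the subbasic open $({\downarrow}a)^+$ on the Priestley side corresponds to the open upset $a^\uparrow$ on the filter side and then to the basic miss-open $\Box_{a^-}$ on $\F_-^\Box$, and dually for the downward direction.
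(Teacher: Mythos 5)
Your proposal is correct and follows essentially the same route as the paper's proof: both establish the bijection between prime filters of $L^\Box$ and filters of $L$ via $x\mapsto\{a\in L : \Box_a\in x\}$ (you realize it concretely through $\mathcal D_{{\rm fg}}(L)$ from Theorem~\ref{thm:Dfg}, the paper invokes the universal mapping property directly), check that it matches $(\Box_a)^+,(\Box_a)^-$ with $a^\uparrow,a^\downarrow$, and then finish by Proposition~\ref{prop: filter space} and Theorem~\ref{them:F-}. The extra explicitness of your map $\Phi(F)=\{S : S\cap F\neq\varnothing\}$ is a harmless (and correct) elaboration, not a different argument.
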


\begin{proof}
By the universal mapping property for $L^\Box$ each unital meet-semilattice homomorphism from $L$ to 2 extends uniquely to a bounded distributive lattice homomorphism from $L^\Box$ to 2. 
Suppose $F$ is a filter of $L^\Box$. Since $\Box$ preserves finite meets, it is easy to see that $\{a\mid \Box_a\in F\}$ is a filter of $L$. Conversely, since filters of $L$ correspond to unital meet-semilattice homomorphisms from $L$ to $2$, the universal mapping property for $L^\Box$ implies that each filter of $L$ is obtained as $\{a\mid \Box_a\in F\}$ for a unique prime filter $F$ of $L^\Box$. This provides a bijective correspondence between $\F(L)$ and the set of prime filters of $L^\Box$. This correspondence takes $a^\uparrow$ to $(\Box_a)^+$ and $a^\downarrow$ to $(\Box_a)^-$. Thus, by Proposition~\ref{prop: filter space} the Priestley space of $L^\Box$ is order-homeomorphic to $\F(L)$. The remaining statements follow from Theorem~\ref{them:F-}.
\end{proof}

We next obtain analogous results for $\F_+$ using the results obtained for $\F_-$ and various symmetries. To begin, we introduce the following notion that is dual to $M^\Box$ (cf.~\cite[Sec.~4.1]{BKR07}). 

\begin{definition} \label{def: Ldiamond}
For a join-semilattice $N$ let $N^\Diamond$ be the bounded distributive lattice freely generated by the set of symbols $\Diamond_a$ for $a\in N$ modulo the relations $\Diamond_0=0$ and $\Diamond_{a\vee b}=\Diamond_a\vee \Diamond_b$ for all $a,b\in N$. 
\end{definition}

Analogous to the situation with $M^\Box$, we have that $N^\Diamond$ is the bounded distributive lattice freely generated by a join-semilattice $N$ with 0. Thus, it provides left adjoint to the forgetful functor from $\sf DL$ to the category of join-semilattices with 0. Using $P^d$ for the order-dual of a poset $P$, it follows from the definitions that if $N$ is a join-semilattice with $0$, then $N^\Diamond$ is isomorphic to the order-dual of $N^{d\,\Box}$, and hence $N^\Diamond$ is isomorphic to $N^{d\,\Box\,d}$. The following is then an immediate consequence of Theorem~\ref{thm:Dfg}:

\begin{corollary}
For a join-semilattice $N$ with $0$, $N^\Diamond$ is isomorphic to the lattice of finitely generated upsets of $N$ ordered by reverse inclusion. 
\end{corollary}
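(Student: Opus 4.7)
The plan is to reduce to Theorem~\ref{thm:Dfg} via the dualization already spelled out in the remark immediately preceding the corollary, namely $N^\Diamond \cong N^{d\,\Box\,d}$. Since $N$ is a join-semilattice with $0$, its order-dual $N^d$ is a meet-semilattice with $1$, so Theorem~\ref{thm:Dfg} applies to $N^d$ and yields $N^{d\,\Box}\cong \mathcal{D}_{\rm fg}(N^d)$, the lattice of finitely generated downsets of $N^d$ ordered by inclusion.

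The second step is to translate this back to the language of $N$. A downset of $N^d$ is exactly an upset of $N$, and a subset is finitely generated as a downset of $N^d$ (as in the definition ${\downarrow}F$ for some finite $F$) iff it is finitely generated as an upset of $N$ in the dual sense (${\uparrow}F$ for some finite $F$). Thus $\mathcal{D}_{\rm fg}(N^d)$ is, as a poset, the lattice of finitely generated upsets of $N$ ordered by inclusion.

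Finally, to obtain $N^\Diamond$ we take the order-dual one more time, as per $N^\Diamond \cong N^{d\,\Box\,d}$. Taking the order-dual of (finitely generated upsets of $N$, ordered by inclusion) yields finitely generated upsets of $N$ ordered by reverse inclusion, which is the claimed description.

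The only real content is the dualization bookkeeping; the hard work was already done in Theorem~\ref{thm:Dfg}. The one point requiring a moment's care is the interaction of the two dualizations in $N^{d\,\Box\,d}$: the inner $d$ turns joins into meets so that Definition~\ref{def: Lbox} can be applied, while the outer $d$ reverses the order on the resulting lattice of finitely generated downsets, converting inclusion of upsets of $N$ into reverse inclusion. No separate universal-property argument is needed, since it is carried along by the isomorphism $N^\Diamond \cong N^{d\,\Box\,d}$ already established.
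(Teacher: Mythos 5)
Your proposal is correct and is exactly the argument the paper intends: the corollary is stated as an immediate consequence of Theorem~\ref{thm:Dfg} via the identification $N^\Diamond\cong N^{d\,\Box\,d}$ given just before it, and your dualization bookkeeping (inner dual to apply the theorem, outer dual to convert inclusion into reverse inclusion) fills in precisely the routine details the paper leaves implicit.
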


Returning to $L \in \sf DL$, let $P(L^d)=(Y,\mu,\leq)$. Elements of $Y$ are prime filters of $L^d$, hence prime ideals of $L$. For $y\in Y$ the set complement $y^c$ of $y$ in $L$ is a prime filter of $L$. This defines a map $\Delta:Y\to X$ given by $\Delta(y)=y^c$. It is easily seen that $\Delta:P(L^d)\to P(L)$ is a dual order-homeomorphism with $\Delta(a^+)=a^-$ and $\Delta(a^-)=a^+$ for each $a\in L$.

\begin{definition}
Let $\G,\G_-,\G_+$ be the closed sets of $(Y,\mu), (Y,\mu^-)$ and $(Y,\mu^+)$ respectively, ordered by set inclusion. Let $\G^\Diamond,\G^\Box$ and $\G^{\Diamond\Box}$ be the hit, miss, and hit-or-miss topologies on $\G$ arising from the space $(Y,\mu)$; and similarly let $\G_-^\Diamond,\G_-^\Box,\G_-^{\Diamond\Box}$ and $\G_+^\Diamond,\G_+^\Box,\G_+^{\Diamond\Box}$ be those arising form the spaces $(Y,\mu^-)$ and $(Y,\mu^+)$ respectively. 
\end{definition}

Since $\Delta(a^+)=a^-$ and $\Delta(a^-)=a^+$, it is clear that $\Delta$ induces order-isomorphisms between $\G$ and $\F$, between $\G_+$ and $\F_-$, and between $\G_-$ and $\F_+$. The subset $\Box_{a^+}$ of $\G_-$ consists of all closed sets $C$ of $\G_-$ that are disjoint from $a^+$, hence is all $C\in\G_-$ such that $a\not\in y$ for all $y\in C$. Therefore, $\Delta(\Box_{a^+})$ is all $D\in\F_+$ such that $a\in x$ for all $x\in D$, and hence $\Delta(\Box_{a^+})=\Box_{a^-}$. Similar reasoning provides 
\[ \Delta(\Box_{a^+})=\Box_{a^-}, \quad \Delta(\Box_{a^-})=\Box_{a^+}, \quad \Delta(\Diamond_{a^+})=\Diamond_{a^-}, \quad \Delta(\Diamond_{a^-})=\Diamond_{a^+}. \]
Then using $\simeq$ for homeomorphic and $\cong$ for order-homeomorphic, we obtain:

\begin{proposition}\label{flipover}
We have the following homeomorphisms and order-homeomorphisms:
\begin{eqnarray*}
\G_-^\Box\simeq\F_+^\Box, \ \G_-^\Diamond\simeq\F_+^\Diamond \ \mbox{ and } \ \G_-^{\Diamond\Box}\cong\F_+^{\Diamond\Box}; \\ 
\G_+^\Box\simeq\F_-^\Box, \ \G_+^\Diamond\simeq\F_-^\Diamond \ \mbox{ and } \ \G_+^{\Diamond\Box}\cong\F_-^{\Diamond\Box}; \\ 
\G^\Box\simeq\F^\Box, \ \G^\Diamond\simeq\F^\Diamond \ \mbox{ and } \ \G^{\Diamond\Box}\cong\F^{\Diamond\Box}.
\end{eqnarray*}
\end{proposition}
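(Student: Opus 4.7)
The plan is to read off the statement from the observations already recorded in the paragraph preceding it. Since $\Delta:P(L^d)\to P(L)$ is a dual order-homeomorphism, the map $(Y,\mu)\to(X,\pi)$ is a homeomorphism, and under order reversal so are $(Y,\mu^+)\to(X,\pi^-)$ and $(Y,\mu^-)\to(X,\pi^+)$. Moreover, $\Delta$ applied pointwise to subsets sends closed sets to closed sets and preserves inclusion, giving the three order-isomorphisms $\G\to\F$, $\G_+\to\F_-$, and $\G_-\to\F_+$ at the level of posets of closed sets.

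To establish the six homeomorphisms in the first two rows, I would invoke the displayed identities $\Delta(\Box_{a^{\pm}})=\Box_{a^{\mp}}$ and $\Delta(\Diamond_{a^{\pm}})=\Diamond_{a^{\mp}}$. By Lemma~\ref{sew} and its symmetric analogue, obtained by applying the lemma to $L^d$ (which swaps $+$ and $-$ and replaces $\F_-$ by $\F_+$), the sets $\Box_{a^{\pm}}$ form a basis of the appropriate miss topology and the sets $\Diamond_{a^{\pm}}$ form a subbasis of the appropriate hit topology. Hence $\Delta$ transports basis to basis and subbasis to subbasis, so it is a homeomorphism for each of the miss, the hit, and, by joining the two, the hit-or-miss topologies. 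For the third row the same argument applies with $(Y,\mu)\to(X,\pi)$ in place of $(Y,\mu^{\pm})\to(X,\pi^{\mp})$: a homeomorphism of base spaces transports open sets and compact sets bijectively, and hence transports the generators $\Diamond_U$ and $\Box_K$ of the hyperspace topologies.

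For the three order-homeomorphisms involving the hit-or-miss topologies, the only additional input is that the pointwise action of $\Delta$ on closed sets preserves set inclusion; combined with the just-established homeomorphism for the Priestley topologies, this gives order-homeomorphisms of Priestley spaces. No serious obstacle is anticipated; the argument is essentially bookkeeping with the translation formulas already established, the only mild subtlety being to cite Lemma~\ref{sew} in each of its obvious variants rather than only for $\F_-$.
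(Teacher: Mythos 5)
Your proposal is correct and follows essentially the same route as the paper, which derives the proposition directly from the identities $\Delta(\Box_{a^{\pm}})=\Box_{a^{\mp}}$ and $\Delta(\Diamond_{a^{\pm}})=\Diamond_{a^{\mp}}$ on the (sub)basic sets of the hyperspace topologies. Your remark for the third row --- that a homeomorphism of base spaces transports opens and compacts, hence the generators $\Diamond_U$ and $\Box_K$ --- in fact handles all three rows at once and makes the appeal to Lemma~\ref{sew} and its variants unnecessary.
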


In addition, using $\cong^d$ for dually order-homeomorphic, we have the following consequence of Theorem~\ref{thm:dual of Lbox} and Proposition~\ref{flipover} (cf.~\cite[Thm.~4.6]{BKR07}).  

\begin{corollary}\label{cor:Ldiamond}
The Priestley space $P(L^\Diamond)$ is order-homeomorphic to $\F_+^{\Diamond\Box}$, the spectral space  $P^+(L^\Diamond)$ is homeomorphic to $\F_+^\Diamond$ and the spectral space $P^-(L^\Diamond)$ is homeomorphic to $\F_+^\Box$.
\end{corollary}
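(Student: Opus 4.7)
The strategy is to apply Theorem~\ref{thm:dual of Lbox} to $L^d$ and then convert the resulting identifications into ones involving $\F_+$ via the symmetries recorded in Proposition~\ref{flipover}. The bookkeeping device is the isomorphism $L^\Diamond\cong L^{d\,\Box\,d}$ established above, together with the general principle that Priestley duality interchanges order-dualization of bounded distributive lattices with order-reversal on Priestley spaces.

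First I would record that for any $K\in\DL$, the Priestley space $P(K^d)$ has the same underlying set and topology as $P(K)$ but with the order reversed (for instance, via the complementation bijection between prime filters of $K$ and prime filters of $K^d$, which sends $a^+$ to $a^-$). This immediately gives the interchange $P^+(K^d)=P^-(K)$ and $P^-(K^d)=P^+(K)$ on the spectral pieces, because the open upsets under the reversed order are exactly the open downsets under the original order. Applying this with $K=(L^d)^\Box$ and combining with $L^\Diamond\cong(L^d)^{\Box\,d}$ yields that $P(L^\Diamond)$ is order-homeomorphic to $P((L^d)^\Box)$ with its order reversed, $P^+(L^\Diamond)\simeq P^-((L^d)^\Box)$, and $P^-(L^\Diamond)\simeq P^+((L^d)^\Box)$.

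Next I would apply Theorem~\ref{thm:dual of Lbox} to $L^d$ in place of $L$. Since $P(L^d)=(Y,\mu,\leq)$, the theorem provides a dual order-homeomorphism $P((L^d)^\Box)\cong^d \G_-^{\Diamond\Box}$ together with homeomorphisms $P^+((L^d)^\Box)\simeq \G_-^\Box$ and $P^-((L^d)^\Box)\simeq \G_-^\Diamond$. Reversing the order in the first statement converts the dual order-homeomorphism into an order-homeomorphism, and substituting into the three identifications of the previous paragraph yields $P(L^\Diamond)\cong \G_-^{\Diamond\Box}$, $P^+(L^\Diamond)\simeq \G_-^\Diamond$, and $P^-(L^\Diamond)\simeq \G_-^\Box$.

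Finally, Proposition~\ref{flipover} supplies $\G_-^{\Diamond\Box}\cong \F_+^{\Diamond\Box}$, $\G_-^\Diamond\simeq \F_+^\Diamond$, and $\G_-^\Box\simeq \F_+^\Box$; chaining these with the identifications from the previous step produces the three claims. No step is technically hard; the only mild hazard is correctly tracking the two order-reversals — one from passing to $L^d$, one from the ``dual'' in Theorem~\ref{thm:dual of Lbox} — which cancel in the Priestley statement but swap $P^+$ with $P^-$ in the spectral ones.
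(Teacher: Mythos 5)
Your argument is correct and follows essentially the same route as the paper: apply Theorem~\ref{thm:dual of Lbox} to $L^d$, use Proposition~\ref{flipover} to pass from $\G_-$ to $\F_+$, and use $L^\Diamond\cong L^{d\,\Box\,d}$ together with the order-reversal behaviour of $P$, $P^+$, $P^-$ under lattice dualization. The only difference is the order in which you chain these identifications (you dualize first and invoke Proposition~\ref{flipover} last, while the paper does the reverse), and your bookkeeping of the two order-reversals is accurate.
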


\begin{proof}
Theorem~\ref{thm:dual of Lbox} gives the following: 
\[P(L^{\Box}) \cong^d \F_-^{\Diamond\Box}, \quad P^+(L^{\Box})\simeq \F_-^\Box\quad \mbox{and}\quad P^-(L^{\Box})\simeq\F_-^\Diamond.\]
This applied to $L^d$ gives the following:
\[P(L^{d\,\Box}) \cong^d \G_-^{\Diamond\Box}, \quad P^+(L^{d\,\Box})\simeq \G_-^\Box\quad \mbox{and}\quad P^-(L^{d\,\Box})\simeq\G_-^\Diamond.\]
By Proposition~\ref{flipover},
\[P(L^{d\,\Box}) \cong^d \F_+^{\Diamond\Box}, \quad P^+(L^{d\,\Box})\simeq \F_+^\Box\quad \mbox{and}\quad P^-(L^{d\,\Box})\simeq\F_+^\Diamond.\]
Thus, since $L^\Diamond$ is isomorphic to $L^{d\,\Box\,d}$ we have 
\[P(L^\Diamond) \cong \F_+^{\Diamond\Box}, \quad P^-(L^\Diamond)\simeq \F_+^\Box\quad \mbox{and}\quad P^+(L^\Diamond)\simeq\F_+^\Diamond.\]
This provides the stated claims. 
\end{proof}

\section{The hyperspace topologies of a Priestley space} \label{Priestley}

We have shown that $\F_-^{\Diamond\Box}$ and $\F_+^{\Diamond\Box}$ are Priestley spaces and realized them as Priestley spaces of bounded distributive lattices $L^\Box$ and $L^\Diamond$ constructed from $L$. A similar result for $\F^{\Diamond\Box}$ is a simple consequence of this. The essential point is that the order of $(X,\pi,\leq)$ plays no role in the definition of $\F^{\Diamond\Box}$. We can forget the order of $(X,\pi,\leq)$ for this purpose, or better yet, replace it with the partial ordering of equality to obtain the Priestley space $(X,\pi,=)$ that is the Priestley space of the free Boolean extension $B_L$ of $L$. For this Priestley space we have that $\F, \F_-,$ and $\F_+$ coincide. Therefore, Theorem~\ref{thm:dual of Lbox} and Corollary~\ref{cor:Ldiamond} imply the following.

\begin{corollary}\label{cor:free-bool-ext}
$\F^{\Diamond\Box}$ is order-homeomorphic to the Priestley space of $B_L^\Diamond$ and dually order-homeomorphic to that of  $B_L^\Box$. The spectral space of $B_L^\Box$ is homeomorphic to $\F^{\Box}$, and the spectral space of $B_L^\Diamond$ is homeomorphic to $\F^{\Diamond}$.
\end{corollary}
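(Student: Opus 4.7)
The plan is to reduce the corollary to the results of Section \ref{spectral} by applying Theorem \ref{thm:dual of Lbox} and Corollary \ref{cor:Ldiamond} to the free Boolean extension $B_L$ rather than to $L$ itself. The first step is to identify the Priestley space of $B_L$ as $(X,\pi,=)$, the same underlying space as $P(L)$ but with trivial order. This follows from the universal property of $B_L$: prime filters of $L$ correspond bijectively to unital lattice homomorphisms $L\to 2$, hence to Boolean homomorphisms $B_L\to 2$, i.e. to prime filters of $B_L$; and in a Boolean algebra any two distinct prime filters are incomparable under inclusion, so the inherited order is equality.

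With this identification in place, every subset of $X$ is both an upset and a downset for the discrete order, so the topologies $\pi^+$ and $\pi^-$ attached to the Priestley space $(X,\pi,=)$ of $B_L$ both coincide with $\pi$, and the three collections of closed sets $\F_-, \F, \F_+$ associated with $B_L$ all reduce to $\F$. Consequently, the hit, miss, and hit-or-miss hyperspace topologies produced from $\pi^+$ and from $\pi^-$ in the sense of Section \ref{spectral}, when applied to $B_L$, are literally the hyperspace topologies $\F^\Diamond, \F^\Box, \F^{\Diamond\Box}$ of the Stone topology $\pi$. Plugging $B_L$ into Theorem \ref{thm:dual of Lbox} then gives $P(B_L^\Box)\cong^d \F^{\Diamond\Box}$ and $P^+(B_L^\Box)\simeq \F^\Box$, while plugging $B_L$ into Corollary \ref{cor:Ldiamond} gives $P(B_L^\Diamond)\cong \F^{\Diamond\Box}$ and $P^+(B_L^\Diamond)\simeq \F^\Diamond$, which is exactly the content of the statement.

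Since the argument is a direct bookkeeping reduction, there is no substantive obstacle. The only point worth articulating carefully is the identification $P(B_L)=(X,\pi,=)$ together with the resulting collapse $\F_-=\F=\F_+$; after that, the four assertions of the corollary are immediate specializations of theorems already proved in the previous section.
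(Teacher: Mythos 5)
Your proposal is correct and follows essentially the same route as the paper: identify $P(B_L)$ with $(X,\pi,=)$, observe that the trivial order forces $\F_-=\F=\F_+$ and collapses $\pi^+,\pi^-$ to $\pi$, and then specialize Theorem~\ref{thm:dual of Lbox} and Corollary~\ref{cor:Ldiamond} to $B_L$. The paper states this reduction more tersely in the paragraph preceding the corollary, while you spell out the identification of $P(B_L)$ via the universal property; the substance is identical.
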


\begin{remark}
We emphasize that neither $B_L^\Diamond$ nor $B_L^\Box$ is a Boolean algebra since the free generation in Definitions~\ref{def: Lbox} and~\ref{def: Ldiamond} happens in $\sf DL$. Therefore, our constructions in Corollary~\ref{cor:free-bool-ext} differ from that in \cite[Prop.~3.12]{KKV04}.
\end{remark}

We next discuss how to lift the order of the Priestley space $P(L) = (X, \pi, \leq)$ to $\F^{\Diamond\Box}$.
A natural candidate is the {\em Egli-Milner order} on the powerset of a poset. 

\begin{definition}  (see, e.g., \cite{Smy91})
For a poset $X$, the Egli-Milner order $\sleq$ on the powerset $\wp(X)$ is given by 
\[A\sqsubseteq B\mbox{ iff }A\subseteq \down B\mbox{ and }B\subseteq \up A.\]
\end{definition}

A note of caution on the name, the Egli-Milner order is not necessarily a partial order on the powerset. It is however a quasi order. We continue to use the terms upset and downset with respect to the Egli-Milner order. As before, a set $U$ is an upset if $F\in U$ and $F\sqsubseteq G$ imply $G\in U$, and a downset is defined similarly. Our interest is in the restriction of the Egli-Milner order to the closed sets $\F$. As mentioned in the introduction, this is something that a number of authors have considered. In particular, the next lemma goes back to Palmigiano \cite[Lem.~26]{Pal04}. For the reader's convenience, we include a short proof. 

\begin{lemma}\label{aqw}
For $a\in L$, the sets $\Box_{a^+},\Diamond_{a^-}$ are downsets in the Egli-Milner order and $\Box_{a^-},\Diamond_{a^+}$ are upsets. Further, if $A,B\in \mathcal{F}$ with $A\not\sqsubseteq B$ then there is $a\in L$ with either
\begin{enumerate}
\item $A\in\Diamond_{a^+}$ and $\,B\in\Box_{a^+}$
\item $A\in\Box_{a^-}$ and $\,B\in\Diamond_{a^-}$. 
\end{enumerate}
\end{lemma}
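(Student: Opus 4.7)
The plan is to split the argument into two parts: first the monotonicity of the four subbasic sets under the Egli-Milner quasi-order, then the separation property for $A\not\sqsubseteq B$. The only two facts I will use beyond the definitions are that $a^+$ is a clopen upset with complement the clopen downset $a^-$, and that closed subsets of a Priestley space are compact and satisfy the usual separation by clopen upsets.

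For monotonicity, each of the four cases is a one-line exploitation of the appropriate half of the Egli-Milner definition together with the fact that $a^+$ is an upset (so ${\uparrow}a^+=a^+$) and $a^-$ is a downset (so ${\downarrow}a^-=a^-$). For example, if $A\in\Box_{a^+}$ and $B\sqsubseteq A$, then $A\subseteq a^-$, so $B\subseteq{\downarrow}A\subseteq a^-$, giving $B\in\Box_{a^+}$; the cases $\Diamond_{a^-}$, $\Box_{a^-}$, and $\Diamond_{a^+}$ are symmetric, alternating between the two clauses $A\subseteq{\downarrow}B$ and $B\subseteq{\uparrow}A$. The main thing to be careful about is matching each clause to the right set.

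For the separation statement, $A\not\sqsubseteq B$ means one of the two defining clauses fails. In the first case, $A\not\subseteq{\downarrow}B$, so I pick $x\in A$ with $x\not\le y$ for every $y\in B$. Priestley separation produces, for each $y\in B$, a clopen upset $U_y$ containing $x$ and missing $y$. The family $\{U_y^c:y\in B\}$ covers the compact set $B$, so finitely many suffice; intersecting the corresponding $U_y$'s yields a clopen upset $U$ containing $x$ and disjoint from $B$. Writing $U=a^+$ for the corresponding $a\in L$ gives $A\in\Diamond_{a^+}$ and $B\in\Box_{a^+}$, which is alternative (1).

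The second case is dual: $B\not\subseteq{\uparrow}A$ yields some $y\in B$ with $x\not\le y$ for every $x\in A$, so for each $x\in A$ a clopen upset $V_x$ contains $x$ and misses $y$; compactness of $A$ collapses these into a finite union $V=a^+$ that contains $A$ and misses $y$, and then $a^-$ is a clopen downset containing $y$ and disjoint from $A$, giving $A\in\Box_{a^-}$ and $B\in\Diamond_{a^-}$, which is alternative (2). The only potential pitfall is bookkeeping with the direction of the Egli-Milner order and with whether one works with the clopen upset or its complement, but there is no substantive obstacle beyond these standard Priestley-space compactness and separation arguments.
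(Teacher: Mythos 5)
Your proposal is correct and follows essentially the same route as the paper: the monotonicity of the four subbasic sets is read off from the two halves of the Egli--Milner definition together with $a^+$ being an upset and $a^-$ a downset, and the separation clause splits into the same two cases. The only difference is that you spell out the compactness argument producing the separating clopen upset, where the paper simply invokes the standard Priestley-space fact that a point outside a closed downset (resp.\ upset) is separated from it by a clopen upset.
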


\begin{proof}
Suppose $A\sqsubseteq B$, so $A\subseteq\down B$ and $B\subseteq\up A$. If $B\in\Box_{a^+}$, then $B\cap a^+=\varnothing$. As $a^+$ is an upset, $\down B\cap a^+=\varnothing$, so $A\cap a^+=\varnothing$, giving $A\in\Box_{a^+}$. Thus $\Box_{a^+}$ is a downset. If $B\in\Diamond_{a^-}$ then $B\cap a^-\neq\varnothing$, so $\up A\cap a^-\neq\varnothing$. As $a^-$ is a downset, $A\cap a^-\neq\varnothing$, giving $A\in\Diamond_{a^-}$. So $\Diamond_{a^-}$ is a downset. The other two cases follow since they are set-theoretic complements of these. 
For the further statement, since $A\not\sqsubseteq B$ either $A\not\subseteq\down B$ or $B\not\subseteq\up A$. Assume the first. Then there is $x\in A$ with $x\not\in\down B$. Therefore, there is a clopen upset $a^+$ with $x\in a^+$ and $\down B\cap a^+=\varnothing$. Thus, $A\in\Diamond_{a^+}$ and $B\in\Box_{a^+}$. The other case gives the second possibility in the statement of the lemma by a similar argument. 
\end{proof}

Thus, $\F^{\Diamond\Box}$ with the quasi-ordering $\sqsubseteq$ satisfies all the conditions to be a Priestley space with the exception of being quasi-ordered rather than partially ordered. Indeed, $\F^{\Diamond\Box}$ is a Stone space since it is the Vietoris space of a Stone space, and Lemma~\ref{aqw} provides that 
incomparable points can be separated by a clopen upset. 
We repair this defect of having a quasi-order rather than a partial order by taking a quotient. This is the approach taken by Palmigiano \cite[Prop.~32]{Pal04} where Proposition~\ref{prop: Palmigiano} was established. We include a short proof for convenience.

\begin{definition}
Let $\sim$ be the equivalence relation on $\F$ associated with the quasi-order $\sqsubseteq$, so $A\sim B$ iff $A\sqsubseteq B$ and $B\sqsubseteq A$. Let $\leq$ be the corresponding partial order on $\F/{\sim}$, and let $q:\F\to \F/{\sim}$ be the quotient map. 
\end{definition}

We equip $\F/{\sim}$ with the quotient topology from $\F^{\Diamond\Box}$ and refer to this as $\F^{\Diamond\Box}/{\sim}$. We recall that a set $S\subseteq \F$ is {\em saturated with respect to} $\sim$ if whenever it contains one member of an equivalence class of $\sim$ it contains all members of that class. This is standard terminology, but differs from another standard usage of saturated introduced in Section~\ref{sec: prelims}. The quotient topology on $\F/{\sim}$ has as opens exactly the sets $q(S)$ where $S$ is open in $\F$ and saturated with respect to $\sim$. 

\begin{proposition} \label{prop: Palmigiano}
With the quotient topology and order, $\F^{\Diamond\Box}/{\sim}$ is a Priestley space with the sets $q(\Box_{a^+})$, $q(\Box_{a^-})$, $q(\Diamond_{a^+})$, $q(\Diamond_{a^-})$ for $a\in L$ as a subbasis. 
\end{proposition}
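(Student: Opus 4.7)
The plan is to compare the quotient topology $\tau_q$ on $\F/{\sim}$ with the topology $\tau_s$ generated by the claimed subbasis, and show that $\tau_q=\tau_s$ is a Priestley topology for the order $\leq$.

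First I would observe that each of the sets $\Box_{a^+},\Box_{a^-},\Diamond_{a^+},\Diamond_{a^-}$ is either an $\sqsubseteq$-upset or an $\sqsubseteq$-downset by Lemma~\ref{aqw}, and hence saturated with respect to $\sim$. This gives $q^{-1}(q(\Box_{a^\pm}))=\Box_{a^\pm}$, and similarly for $\Diamond_{a^\pm}$, so each image is open in $\tau_q$; since $\Box_{a^\pm}$ and $\Diamond_{a^\pm}$ are set-theoretic complements, the images $q(\Box_{a^\pm})$ and $q(\Diamond_{a^\pm})$ are complementary open subsets of $\F/{\sim}$. Thus each subbasic set is clopen, and $\tau_s\subseteq\tau_q$.

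Next I would verify Priestley separation for $(\F/{\sim},\tau_s,\leq)$. Given $[A]\not\leq[B]$ we have $A\not\sqsubseteq B$, so Lemma~\ref{aqw} supplies $a\in L$ with either (i) $A\in\Diamond_{a^+}$ and $B\in\Box_{a^+}$, or (ii) $A\in\Box_{a^-}$ and $B\in\Diamond_{a^-}$. In case (i), $q(\Diamond_{a^+})$ is a clopen upset (upset because $\Diamond_{a^+}$ is $\sqsubseteq$-upward closed) containing $[A]$ and missing $[B]$; case (ii) is symmetric using $q(\Box_{a^-})$. In particular $\tau_s$ is Hausdorff. For compactness, $\F^{\Diamond\Box}$ is compact as the Vietoris space of the Stone space $(X,\pi)$, so its continuous image $(\F/{\sim},\tau_q)$ is compact. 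Because $\tau_s\subseteq\tau_q$ with $\tau_s$ Hausdorff and $\tau_q$ compact, the identity map $(\F/{\sim},\tau_q)\to(\F/{\sim},\tau_s)$ is a continuous bijection from a compact space to a Hausdorff space, hence a homeomorphism, and $\tau_s=\tau_q$. This gives the Priestley space structure and the asserted subbasis simultaneously.

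The main obstacle is bookkeeping rather than computation: one must use both halves of Lemma~\ref{aqw}, first that each subbasic set is $\sqsubseteq$-monotone (so it is $\sim$-saturated and descends to an open set in $\tau_q$), and second that incomparable equivalence classes in $\F/{\sim}$ are actually separated by one of these subbasic sets. The ``coarser Hausdorff equals finer compact'' coincidence then lets me sidestep a direct Alexander subbase compactness argument for $\tau_s$ and extract compactness from the quotient side instead.
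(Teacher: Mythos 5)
Your proof is correct and follows essentially the same route as the paper's: both use Lemma~\ref{aqw} to get that the subbasic sets are $\sim$-saturated upsets/downsets (hence descend to clopen sets in the quotient) and to get Priestley separation, both get compactness from the quotient of the compact space $\F^{\Diamond\Box}$, and both conclude with the observation that a Hausdorff topology coarser than a compact one must coincide with it. The only difference is presentational: you verify separation for the subbasis-generated topology first and then identify it with the quotient topology, while the paper does this in the opposite order.
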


\begin{proof}
The quotient of a compact space is compact. By Lemma~\ref{aqw} the sets $\Box_{a^+},\Box_{a^-},\Diamond_{a^+},\Diamond_{a^-}$ are upsets or downsets, hence are open sets that are saturated with respect to $\sim$. 
Since this collection of sets is closed under complements, their images are clopen. Finally, if $q(P)\nleq q(Q)$, then $P\not\sqsubseteq Q$, so by Lemma~\ref{aqw} they are separated by sets of the specified form, hence $q(P)$ and $q(Q)$ are separated by a clopen upset and downset. So the quotient is a Priestley space. The indicated sets of the quotient clearly generate a Hausdorff topology coarser than the compact quotient topology, and therefore generate the quotient topology. 
\end{proof}

This quotient has an alternate description. A subset $C$ of a partially ordered set is {\em convex} if $x,y\in C$ and $x\leq z\leq y$ imply $z\in C$. This is equivalent to having $C=\down C\cap \up C$. Each set $A$ has a smallest convex set $A^*$ that contains it given by $A^*=\down A\cap \up A$. This is called its {\em convex hull}. It is not difficult to show that $A\sqsubseteq B$ iff $A^*\sqsubseteq B^*$ and the restriction of the Egli-Milner order to convex sets is a partial order. We next apply this to $\F$. 

\begin{definition}\label{def: C(X)}
Let $\C$ be the set of convex closed subsets of $(X,\pi,\leq)$ under the Egli-Milner order $\sqsubseteq$ with topology generated by taking for all $a\in L$ the subsets $\Box_{a^+},\Box_{a^-},\Diamond_{a^+},\Diamond_{a^-}$ of $\F$ and restricting them to $\C$. We call this the {\em weak hit-or-miss topology} on $\C$. 
\end{definition}

It was pointed out in \cite{BK07} that it is a consequence of \cite{Joh85} that $\C$ with the weak hit-or-miss topology forms a Priestley space. 
A direct proof was given in \cite{VV14}, where $\C(X)$ was denoted by ${\rm V_c}(X)$ and called the {\em Vietoris convex hyperspace}. A bitopological point of view on ${\rm V_c}(X)$ was given in \cite{Lau15}. (see also \cite[Ch.~4]{Jakl2017}). The connection to the quotient construction of \cite{Pal04} was not addressed in the above papers, so we address it next. 

\begin{theorem}\label{prop: Conv}
$\C$ is order homeomorphic to $\F^{\Diamond\Box}/{\sim}$, hence is a Priestley space. 
\end{theorem}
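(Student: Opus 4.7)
The plan is to define the natural comparison map $\varphi:\C\to\F^{\Diamond\Box}/{\sim}$ as the composition $\varphi=q\circ\iota$ where $\iota:\C\hookrightarrow\F$ is inclusion, and then verify that $\varphi$ is a bijection that preserves and reflects both the order and the specified subbasis. The map $\varphi$ is well-defined on the nose. To show bijectivity, I would use the convex hull $A^{*}={\downarrow}A\cap{\uparrow}A$: in a Priestley space ${\downarrow}A$ and ${\uparrow}A$ are closed whenever $A$ is, so $A^{*}\in\C$. A direct check that $A\sqsubseteq A^{*}$ and $A^{*}\sqsubseteq A$ yields $A\sim A^{*}$, which shows surjectivity of $\varphi$; injectivity is immediate since the Egli–Milner quasi-order restricts to a genuine partial order on $\C$ (if $A,B\in\C$ and $A\sim B$ then $A=B$).

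Next I would handle the order. For $A,B\in\C$, by definition $\varphi(A)\le\varphi(B)$ in $\F/{\sim}$ iff $A\sqsubseteq B$, so $\varphi$ is an order-isomorphism of $(\C,\sqsubseteq)$ with $(\F/{\sim},\le)$.

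The topological step is the heart of the matter. I need to show that $\varphi$ sends the chosen subbasis of $\C$ bijectively to the subbasis of $\F^{\Diamond\Box}/{\sim}$ given in Proposition~\ref{prop: Palmigiano}. For each $a\in L$ and each of the four sets $S\in\{\Box_{a^{+}},\Box_{a^{-}},\Diamond_{a^{+}},\Diamond_{a^{-}}\}$ I will verify $q(S\cap\C)=q(S)$. One inclusion is trivial. For the other, given $A\in S$ I claim its convex hull $A^{*}$ also lies in $S$, which then gives $q(A^{*})=q(A)\in q(S\cap\C)$. This uses Lemma~\ref{aqw}: for instance, $\Box_{a^{+}}$ is an Egli–Milner downset and $A^{*}\sqsubseteq A$, so $A\in\Box_{a^{+}}$ forces $A^{*}\in\Box_{a^{+}}$; the analogous monotonicity handles the other three subbasic sets. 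This makes $\varphi$ a continuous bijection that is also open on a subbasis, hence a homeomorphism. Since $\F^{\Diamond\Box}/{\sim}$ is Priestley by Proposition~\ref{prop: Palmigiano}, so is $\C$, which is the remaining assertion of the theorem.

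The main obstacle I anticipate is the last step verifying $q(S\cap\C)=q(S)$ for all four subbasic families. It is not enough that each equivalence class possesses a convex representative; one must pick the canonical one, $A^{*}$, and know it inherits membership in $S$. This is where the monotonicity of the four subbasic sets under the Egli–Milner order, established in Lemma~\ref{aqw}, is essential and has to be invoked for each of the four cases separately. Once this monotonicity is in hand the argument is uniform, but without it the surjectivity of $\varphi$ on subbasic opens would fail.
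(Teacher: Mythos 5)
Your proposal is correct and follows essentially the same route as the paper: the paper works with the map $\alpha(A)=A^{*}$ and factors it through $q$ to get $\overline{\alpha}:\F/{\sim}\to\C$, which is exactly the inverse of your $\varphi=q\circ\iota$, and both arguments rest on the same three facts (closedness of $A^{*}$ via closedness of ${\uparrow}A$ and ${\downarrow}A$, the identification of $\sim$ with the kernel of the convex-hull map, and the correspondence of the four subbasic families, which the paper checks directly from $a^{+}$ being an upset while you derive it from the monotonicity in Lemma~\ref{aqw}). No gaps.
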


\begin{proof}
Let $\alpha:\F\to\C$ be given by $\alpha(A)=A^*$. Since $\F$ is compact and satisfies the Priestley separation axiom, the upset and downset generated by a closed set is closed (see, e.g., \cite[Prop.~2.3]{BMM02}), so $A^*={\downarrow}A\cap{\uparrow}A$ is closed and convex. Since $A\sqsubseteq B$ iff $A^*\sqsubseteq B^*$, the kernel of $\alpha$ is the equivalence relation $\sim$ and there is an order-isomorphism $\overline{\alpha}:\F/{\sim}\to\C$ with $\overline{\alpha}\circ q = \alpha$. For $a\in L$, we have that $a^+$ is an upset of $X$. So for $A\in\F$ we have $A\cap a^+=\varnothing$ iff $A^*\cap a^+=\varnothing$. It follows that $\alpha(\Box_{a^+})=\Box_{a^+}\cap\C$, hence $\overline{\alpha}q(\Box_{a^+})=\alpha(\Box_{a^+})=\Box_{a^+}\cap\C$. A similar result holds for $\Box_{a^-}$, $\Diamond_{a^+}$ and $\Diamond_{a^-}$. Thus, $\overline{\alpha}$ maps a subbasis of $\F/{\sim}$ to a subbasis of $\C$, hence $\overline{\alpha}$ is an order-homeomorphism. 
\end{proof}

While the convex closed sets $\C$ are realized as a quotient of $\F^{\Diamond\Box}$ under the weak hit-or-miss topology, we can also consider $\C$ as a subspace of $\F^{\Diamond\Box}$. We call this subspace topology the {\em hit-or-miss} topology on $\C$. It has as a subbasis the restrictions of all sets $\Box_K$ and $\Diamond_U$ to $\C$ where $K$ ranges over the compact sets and $U$ over the open sets of $(X,\pi)$. As indicated by the choice of names, the weak hit-or-miss topology is contained in the hit-or-miss topology. One can naturally ask whether these topologies coincide, and whether the hit-or-miss topology of $\C$ is compact. Indeed, this was raised by Palmigiano \cite{Pal04}. The following example shows that this is not the case. 

\begin{example} \label{example}
Suppose $L$ is the bounded distributive lattice whose Priestley space is the poset $X$ shown below, where the clopen sets are the finite sets not containing $x$ and the cofinite sets containing $x$. Therefore, $X$ is the one-point compactification of the discrete space $X \setminus \{x\}$. Thus, $\F$ consists of all subsets that contain $x$ and the finite subsets that do not contain $x$, so the open sets consist of all subsets that do not contain $x$ and the cofinite sets that contain $x$. 
\vspace{2ex}

\begin{center}
\begin{tikzpicture}
\draw[fill] (0,0) circle (1.5pt); 
\draw[fill] (-2,1) circle (1.5pt); \draw[fill] (-1,1) circle (1.5pt); \draw[fill] (0,1) circle (1.5pt); 
\draw[fill] (1,1) circle (1.5pt); \node at (2,1) {$\cdots$}; \draw[fill] (0,2) circle (1.5pt);
\draw (0,0)--(-2,1)--(0,2)--(-1,1)--(0,0)--(0,1)--(0,2)--(1,1)--(0,0);
\node at (.4,0) {$x$}; \node at (.4,2) {$z$}; \node at (-2.3,1) {$y_1$}; \node at (-1.3,1) {$y_2$}; \node at (-.3,1) {$y_3$};
\node at (.6,1) {$y_4$};
\end{tikzpicture}
\end{center}
\vspace{2ex}

We show that $\{x,z\}$ belongs to the closure of $\C$. Since $\{x,z\}$ is not convex, this implies that $\C$ is not closed in $\F^{\Diamond\Box}$. Thus, $\C$ with the hit-or-miss topology is not compact, and hence the hit-or-miss topology is strictly stronger than the weak hit-or-miss topology. For this it is sufficient to show that each basic open neighborhood of $\{x,z\}$ contains a convex set of the form $\{y_n,z\}$, for which it is enough to show that each subbasic open neighborhood $\Box_K\cap\Diamond_U$ of $\{x,z\}$, where $K$ is compact and $U$ is open, contains cofinitely many of the sets of the form $\{y_n,z\}$. 

To see this, let $\{x,z\}\in\Box_K\cap\Diamond_U$. Then $K$ does not contain $x$ and $z$. Since $K$ is a closed set that doesn't contain $x$ and $z$, it must be a finite subset of $\{y_n\mid n\in\mathbb{N}\}$, so cofinitely many $\{y_n,z\}$ are in $\Box_K$. The open set $U$ must contain at least one of $x,z$. If it contains $z$, then all sets $\{y_n,z\}$ belong to $\Diamond_U$. If it does not contain $z$, then it must contain $x$, and as it is open, it must be cofinite. Thus, cofinitely many of the sets $\{y_n,z\}$ are in $\Diamond_U$, and hence also in $\Box_K\cap\Diamond_U$. 
\end{example}

We next turn to the matter of realizing the Priestley space $\C$ as the Priestley space of a bounded distributive lattice constructed from $L$. The following definition originates with Johnstone \cite{Joh82,Joh85} (see also Dunn \cite{Dun95}).

\begin{definition}\label{polp}
Let $L^{\Diamond\Box}$ be the bounded distributive lattice freely generated by the set of symbols $\Diamond_a,\Box_a$ for $a\in L$ modulo the following relations. 
\begin{center}
\begin{tabular}{ll}
$\Diamond_0 = 0$ & $\Diamond_a \vee \Diamond_b = \Diamond_{a \vee b}$ \\
$\Box_1 = 1$ & $\Box_a \wedge \Box_b = \Box_{a \wedge b}$ \\
$\Box_{a \vee b} \le \Box_a \vee \Diamond_b$ \hspace{.5in} & $\Box_a \wedge \Diamond_b \le \Diamond_{a \wedge b}$
\end{tabular}
\end{center}
\end{definition}

This construction can be viewed as follows.

\begin{definition} \label{def: DL diamond box}
Let ${\sf DL}^{\Diamond\Box}$ be the category whose objects are bounded distributive lattices and whose morphisms are pairs $(f,g):L\to K$ such that $f$ preserves finite joins, $g$ preserves finite meets, $g (a \vee b) \le g(a) \vee f(b)$, and $g(a) \wedge f(b) \le f (a \wedge b)$. 
\end{definition}

Phrased in terms of the category ${\sf DL}^{\Diamond\Box}$, Definition~\ref{polp} means that $(\Diamond,\Box):L\to L^{\Diamond\Box}$ is a morphism in ${\sf DL}^{\Diamond\Box}$ and if $(f,g):L\to K$ is a morphism in ${\sf DL}^{\Diamond\Box}$ then there is a unique morphism $h:L^{\Diamond\Box}\to K$ in $\sf DL$ with $(h,h)\circ(\Diamond,\Box) = (f,g)$. 
\begin{center}
\begin{tikzpicture}[scale = 1.2]
\node at (0,1.5) {$L$};
\node at (3,1.5) {$L^{\Diamond\Box}$};
\node at (3,0) {$K$};
\draw[->] (0.35,1.45)--(2.65,1.45);
\draw[->] (0.35,1.55)--(2.65,1.55);
\draw[->] (0.35,1.2) -- (2.65,.1);
\draw[->] (0.3,1.1) -- (2.65,-0.025);
\draw[dashed,->] (3,1.1) -- (3,0.3);
\node at (3.5,.75) {$h$};
\node at (1.7,.85) {$f$};
\node at (1.3,.35) {$g$};
\node at (1.5,1.77) {$\Diamond$};
\node at (1.5,1.25) {$\Box$};
\end{tikzpicture}
\end{center}

This observation together with \cite[p.~81]{Mac98} yields the following.

\begin{proposition}\label{prop: L}
There is a functor $\mathcal R:{\sf DL}\to{\sf DL}^{\Diamond\Box}$ that is the identity on objects and sends a ${\sf DL}$-morphism $f:L\to K$ to the ${\sf DL}^{\Diamond\Box}$-morphism $(f,f)$. This functor has a left adjoint $\mathcal L:{\sf DL}^{\Diamond\Box}\to{\sf DL}$ that sends $L$ to $L^{\Diamond\Box}$.
\end{proposition}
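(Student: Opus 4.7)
The crux of the proof is recognizing that the universal property recorded immediately after Definition~\ref{polp} is literally the defining property of a universal arrow from $L$ to the functor $\mathcal R$. Once this is observed, the existence and construction of the left adjoint $\mathcal L$ is formal (cf.~\cite[Thm.~2, p.~81]{Mac98}).

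First I would check that $\mathcal R$ is a well-defined functor. Given a ${\sf DL}$-morphism $f:L\to K$, the pair $(f,f)$ satisfies the four defining conditions of Definition~\ref{def: DL diamond box}: both components preserve finite joins and finite meets, and the two mixed inequalities $f(a\vee b)\le f(a)\vee f(b)$ and $f(a)\wedge f(b)\le f(a\wedge b)$ hold with equality. Functoriality is then immediate since composition in ${\sf DL}^{\Diamond\Box}$ is componentwise and $\mathcal R$ clearly preserves identities.

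Next I would observe that the six defining relations of $L^{\Diamond\Box}$ in Definition~\ref{polp} are tailored precisely so that the pair $(\Diamond,\Box):L\to L^{\Diamond\Box}$ is a morphism in ${\sf DL}^{\Diamond\Box}$ from $L$ to $\mathcal R(L^{\Diamond\Box})$. Indeed, the two relations involving only $\Diamond$ say that $\Diamond$ preserves finite joins; the two involving only $\Box$ say that $\Box$ preserves finite meets; and the two remaining mixed inequalities $\Box_{a\vee b}\le\Box_a\vee\Diamond_b$ and $\Box_a\wedge\Diamond_b\le\Diamond_{a\wedge b}$ are exactly the last two conditions for a ${\sf DL}^{\Diamond\Box}$-morphism.

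Then I would interpret the universal property stated just below Definition~\ref{polp}: any ${\sf DL}^{\Diamond\Box}$-morphism $(f,g):L\to\mathcal R(K)$ with $K\in{\sf DL}$ factors uniquely as $(h,h)\circ(\Diamond,\Box)$ for a unique ${\sf DL}$-morphism $h:L^{\Diamond\Box}\to K$. This is precisely the statement that $(\Diamond,\Box)$ is a universal arrow from $L$ to $\mathcal R$. By the standard characterization of adjoints via universal arrows, this yields a left adjoint $\mathcal L$ of $\mathcal R$ with $\mathcal L(L)=L^{\Diamond\Box}$ on objects. On a ${\sf DL}^{\Diamond\Box}$-morphism $(f,g):L\to L'$, $\mathcal L(f,g):L^{\Diamond\Box}\to L'^{\Diamond\Box}$ is the unique ${\sf DL}$-morphism furnished by the universal property applied to the composite ${\sf DL}^{\Diamond\Box}$-morphism $(\Diamond',\Box')\circ(f,g):L\to\mathcal R(L'^{\Diamond\Box})$; functoriality of $\mathcal L$ then follows by uniqueness. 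The only non-formal step is verifying that $(\Diamond,\Box)$ really is a ${\sf DL}^{\Diamond\Box}$-morphism, but this is immediate by inspection of the relations, so no genuine obstacle remains.
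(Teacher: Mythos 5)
Your proposal is correct and follows exactly the route the paper takes: the paper gives no separate proof but simply notes that the universal property displayed after Definition~\ref{polp} together with the universal-arrow characterization of adjunctions in \cite[p.~81]{Mac98} yields the result. Your additional verifications --- that $(f,f)$ satisfies the mixed inequalities of Definition~\ref{def: DL diamond box} with equality, and that functoriality of $\mathcal L$ follows from uniqueness --- are accurate fillings-in of the details the paper leaves implicit.
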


We next give a concrete realization of $L^{\Diamond\Box}$ as a sublattice of the powerset $\wp(B_L)$ of the free Boolean extension $B_L$ of $L$.

\begin{theorem} \label{nop}
$L^{\Diamond\Box}$ is the sublattice of $\wp(B_L)$ generated by the sets $\Box_a={\downarrow}a$ and $\Diamond_b=({\downarrow}b')^c$ where $a,b$ range over all elements of $L$ and $b'$ is the complement of $b$ in $B_L$. 
\end{theorem}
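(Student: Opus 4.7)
Let $S$ denote the sublattice of $\wp(B_L)$ in the statement. I will produce a lattice isomorphism $\varphi : L^{\Diamond\Box} \to S$ induced by $\Box_a \mapsto {\downarrow}a$ and $\Diamond_a \mapsto ({\downarrow}a')^c$.

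First I would check that the assignment respects the six defining relations of Definition~\ref{polp}; each reduces to a routine Boolean identity in $B_L$. For instance, $\Box_a \wedge \Diamond_b \le \Diamond_{a \wedge b}$ amounts to the claim that any $d \in B_L$ with $d \le a$ and $d \wedge b \ne 0$ satisfies $d \wedge (a \wedge b) = d \wedge b \ne 0$, whence $d \not\le (a \wedge b)'$. By the universal property of $L^{\Diamond\Box}$, the assignment extends to a bounded lattice homomorphism $\varphi : L^{\Diamond\Box} \to \wp(B_L)$ whose image is $S$, so surjectivity onto $S$ is automatic.

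The main obstacle is injectivity, for which I plan to exhibit a separating family of evaluations. For each $d \in B_L$, evaluation $\sigma_d : \wp(B_L) \to 2$ is a lattice homomorphism, so $\sigma_d \circ \varphi : L^{\Diamond\Box} \to 2$ corresponds via Proposition~\ref{prop: L} to the $\sf DL^{\Diamond\Box}$-morphism $L \to 2$ given by the filter $F_d = \{a \in L : d \le a\}$ and ideal $I_d = \{a \in L : d \wedge a = 0\}$. Given $\alpha \ne \beta$ in $L^{\Diamond\Box}$, pick a $\sf DL$-morphism $h : L^{\Diamond\Box} \to 2$ with $h(\alpha) \ne h(\beta)$; again by Proposition~\ref{prop: L}, $h$ corresponds to a filter-ideal pair $(F, I)$ of $L$ satisfying (a) $a \vee b \in F$ and $b \in I \Rightarrow a \in F$, and (b) $a \in F$ and $a \wedge b \in I \Rightarrow b \in I$. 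Let $R \subseteq L$ be the finite set of subscripts occurring in $\alpha$ or $\beta$. Since $h(\alpha)$ and $h(\beta)$ depend only on $(F \cap R, I \cap R)$, it suffices to find $d \in B_L$ with $F_d \cap R = F \cap R$ and $I_d \cap R = I \cap R$.

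The candidate is
\[
d = \bigwedge (R \cap F) \wedge \Bigl(\bigvee (R \cap I)\Bigr)' \in B_L,
\]
with empty meet read as $1$ and empty join as $0$. For $r \in R \cap F$, $d \le r$ is immediate, and for $r \in R \cap I$, $d \wedge r \le (\bigvee (R \cap I))' \wedge r = 0$. The crux is handling $r \in R \setminus (F \cup I)$: supposing $d \wedge r = 0$ in $B_L$ translates to $\bigwedge(R \cap F) \wedge r \le \bigvee(R \cap I)$ as an inequality in $L$, which by distributivity puts $\bigwedge(R \cap F) \wedge r$ in $I$, and then mixing condition (b) forces $r \in I$, a contradiction. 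Dually, supposing $d \le r$ translates to $\bigwedge(R \cap F) \le \bigvee(R \cap I) \vee r$ in $L$, so $\bigvee(R \cap I) \vee r \in F$, and mixing condition (a) applied with $b = \bigvee(R \cap I) \in I$ forces $r \in F$, a contradiction. This last step is the hard part: both mixing conditions of Definition~\ref{polp} are used essentially to ensure that the finite truncation of any such pair $(F,I)$ is realized by evaluation at a concrete element of $B_L$.
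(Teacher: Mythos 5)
Your proposal is correct in outline but takes a genuinely different route from the paper. The paper never constructs a map out of $L^{\Diamond\Box}$ at all: it shows directly that the sublattice $S$ together with $\Box a={\downarrow}a$, $\Diamond b=({\downarrow}b')^c$ \emph{is} the universal solution, by taking an arbitrary ${\sf DL}^{\Diamond\Box}$-morphism $(f,g):L\to D$ and extending $\Box a\mapsto g(a)$, $\Diamond b\mapsto f(b)$ to a lattice homomorphism on $S$ via the Balbes--Dwinger criterion. Everything then reduces to one syntactic computation: a containment $\Box a\wedge\Diamond b_1\wedge\cdots\wedge\Diamond b_m\le\Box a_1\vee\cdots\vee\Box a_n\vee\Diamond b$ in $\wp(B_L)$ rewrites as ${\downarrow}(a\wedge b')\subseteq{\downarrow}a_1\cup\cdots\cup{\downarrow}a_n\cup{\downarrow}b_1'\cup\cdots\cup{\downarrow}b_m'$, and since the left side is a \emph{principal} downset its generator lands in a single term on the right, giving $a\le a_i\vee b$ or $a\wedge b_j\le b$, which the morphism axioms convert into the target inequality. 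You instead take the canonical surjection $\varphi:L^{\Diamond\Box}\to S$ for granted and prove injectivity by separating points with the evaluations $\sigma_d$, $d\in B_L$; this requires the prime filter theorem for $L^{\Diamond\Box}$, the identification of homomorphisms $L^{\Diamond\Box}\to 2$ with weakly prime pairs (which the paper only develops afterwards, in Proposition~\ref{ghu}, though not circularly), and your realization lemma producing $d=\bigwedge(R\cap F)\wedge(\bigvee(R\cap I))'$. Your route is more semantic and arguably illuminates \emph{why} $\wp(B_L)$ has enough points, but it is longer and leans on more machinery; the paper's buys a shorter, self-contained argument that also exhibits $S$ as free without invoking representation theorems.

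There is one real hole in your verification that $F_d\cap R=F\cap R$ and $I_d\cap R=I\cap R$: you check the forward inclusions ($r\in F\Rightarrow d\le r$, $r\in I\Rightarrow d\wedge r=0$) and the case $r\in R\setminus(F\cup I)$, but not that $r\in F\setminus I$ forces $r\notin I_d$ nor that $r\in I\setminus F$ forces $r\notin F_d$. Both of these fail precisely when $d=0$, and $d=\bigwedge(R\cap F)\wedge(\bigvee(R\cap I))'$ can equal $0$. To close this you need the observation that if $F\cap I\neq\varnothing$ then the weakly prime conditions force $F=I=L$ (from $a\in F\cap I$ one gets $a\wedge b\in I$ and $a\vee b\in F$ for all $b$, and the two mixing conditions then put every $b$ in both $I$ and $F$). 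Hence either $(F,I)=(L,L)$, in which case $d=0$ and $F_d=I_d=L$ match $F$ and $I$ on all of $R$, or $F\cap I=\varnothing$, in which case $\bigwedge(R\cap F)\in F$ cannot lie below $\bigvee(R\cap I)\in I$, so $d\neq 0$ and no $r$ can lie in both $F_d$ and $I_d$. With that dichotomy added, your argument is complete.
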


\begin{proof}
Let $S$ be the described sublattice of $\wp(B_L)$ and note that this contains the bounds. We first show that $(\Diamond,\Box):L\to S$ is a morphism in ${\sf DL}^{\Diamond\Box}$ where $\Diamond a=\Diamond_a$ and $\Box a=\Box_a$. Clearly $\Diamond$ preserves finite joins and $\Box$ finite meets. Note that $\Box(a\vee b)\leq \Box a\vee\Diamond b$ is equivalent to ${\downarrow}(a\vee b)\subseteq {\downarrow}a\cup ({\downarrow}b')^c$. This in turn is equivalent to ${\downarrow}(a\vee b)\cap {\downarrow}b'\subseteq {\downarrow}a$, which is equivalent to the obviously true statement ${\downarrow}(a\wedge b')\subseteq{\downarrow}a$. Verifying that $\Box a\wedge \Diamond b\leq \Diamond(a\wedge b)$ is similar. 

It remains to show that this provides the universal solution to this mapping problem. Suppose $D$ is a bounded distributive lattice and $(f,g):L\to D$ is a morphism in  ${\sf DL}^{\Diamond\Box}$. We must show there is a bounded lattice homomorphism $h:S\to D$ with $h\circ\Diamond = f$ and $h\circ\Box = g$. On the generators of $S$ define $h(\Box a) = g(a)$ and $h(\Diamond b) = f(b)$. To show that this extends to a lattice homomorphism, by \cite[p.~86]{BD74} we must show that if a finite meet of generators of $S$ lies below a finite join of generators of $S$, then the meet of their images under $h$ lies below the join of their images under $h$. 

Since the generators $\Box a$ are closed under finite meets, the generators $\Diamond b$ are closed under finite joins, $g$ preserves finite meets, and $f$ preserve finite joins, this amounts to showing that $\Box a\wedge \Diamond b_1\wedge \cdots \wedge \Diamond b_m \,\leq\, \Box a_1\vee \cdots \vee \Box a_n\vee \Diamond b$ 
implies
$g(a)\wedge f(b_1)\wedge \cdots \wedge f(b_m)\,\,\leq\,\, g(a_1)\vee\cdots\vee g(a_n)\vee f(b)$.
The assumption is equivalent to having
\[{\downarrow}a\cap ({\downarrow}b_1'\cup\cdots\cup{\downarrow}b_m')^c\,\,\subseteq\,\, {\downarrow}a_1\cup\cdots\cup{\downarrow}a_n\cup ({\downarrow}b')^c.\]
This is equivalent to 
\[{\downarrow}a\cap{\downarrow}b' \, \subseteq\, {\downarrow}a_1\cup\cdots\cup{\downarrow}a_n\cup {\downarrow}b_1'\cup\cdots\cup{\downarrow}b_m' \]
This implies that $a\wedge b'\leq a_i$ for some $i\leq n$ or $a\wedge b'\leq b_j'$ for some $j\leq m$. Therefore either $a\leq a_i\vee b$ for some $i\leq n$ or $a\wedge b_j\leq b$ for some $j\leq n$. If $a\leq a_i\vee b$, then since $(f,g)$ is a morphism in  ${\sf DL}^{\Diamond\Box}$, we have $g(a)\leq g(a_i\vee b) \leq g(a_i)\vee f(b)$; and if $a\wedge b_j\leq b$, then $g(a)\wedge f(b_j)\leq f(a\wedge b_j)\leq f(b)$. Either case yields the desired conclusion that $g(a)\wedge f(b_1)\wedge \cdots \wedge f(b_m)\,\,\leq\,\, g(a_1)\vee\cdots\vee g(a_n)\vee f(b)$.  
\end{proof}

We now describe the Priestley space of $L^{\Diamond\Box}$ following the suggestion made in \cite[Rem.~4.5.6]{Jakl2017}. As in the previous section, it is convenient to work with an intermediary structure, this time built from the filters and ideals of $L$ (see \cite{Massas2016}). 

\begin{definition}
A {\em weakly prime pair} $(F,I)$ of $L$ consists of a filter $F$ and an ideal $I$ that satisfy 
\begin{align*}
a\vee b \in F &\Rightarrow a\in F\mbox{ or } b\not\in I\\
a\wedge b \in I &\Rightarrow a\not\in F\mbox{ or }b\in I
\end{align*}
Let $\W$ be the set of weakly prime pairs of $L$. Partially order $\W$ by setting $(F,I)\leq (F',I')$ if $F\subseteq F'$ and $I'\subseteq I$ and topologize $\W$ with the following sets, for all $a\in L$, as a subbasis. 
\begin{align*}
\Box_a^\uparrow=\{(F,I)\mid a\in F\}\quad &\quad \Box_a^\downarrow=\{(F,I)\mid a\not\in F\}\\
\Diamond_a^\uparrow\,=\,\{(F,I)\mid a\not\in I\}\quad &\quad \Diamond_a^\downarrow=\{(F,I)\mid a\in I\}
\end{align*}
\end{definition}

\begin{proposition}\label{ghu}
There is an order-homeomorphism $\Lambda:P(L^{\Diamond\Box})\to\W$ given by $\Lambda(G)=(F,I)$ where $F=\{a\mid \Box_a\in G\}$ and $I=\{a\mid \Diamond_a\not\in G\}$. Further, for each $a\in L$ we have $\Lambda(\Box_a^+)=\Box_a^\uparrow$, $\Lambda(\Box_a^-)=\Box_a^\downarrow$, $\Lambda(\Diamond_a^+)=\Diamond_a^\uparrow$ and $\Lambda(\Diamond_a^-)=\Diamond_a^\downarrow$.
\end{proposition}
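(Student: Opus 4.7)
The plan is to verify the four things needed: (i) $\Lambda$ is well defined, i.e., sends a prime filter of $L^{\Diamond\Box}$ to a weakly prime pair of $L$; (ii) $\Lambda$ is a bijection; (iii) $\Lambda$ is an order-isomorphism; (iv) $\Lambda$ sends the four specified subbasic sets as claimed, so is a homeomorphism.

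For (i), fix a prime filter $G$ of $L^{\Diamond\Box}$ and set $F=\{a \mid \Box_a\in G\}$ and $I=\{a\mid \Diamond_a\notin G\}$. From $\Box_1=1$ and $\Box_{a\wedge b}=\Box_a\wedge\Box_b$ one sees that $F$ is closed under finite meets and contains $1$; the relations also force $\Box$ to be order-preserving (since $a\le b$ yields $\Box_a=\Box_{a\wedge b}=\Box_a\wedge\Box_b\le \Box_b$), so $F$ is an upset, and hence a filter. Dually, $I$ is an ideal. The first weakly prime condition comes from the relation $\Box_{a\vee b}\le \Box_a\vee \Diamond_b$ together with primality of $G$: if $a\vee b\in F$ then $\Box_a\vee\Diamond_b\in G$, forcing $\Box_a\in G$ or $\Diamond_b\in G$. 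The second condition comes symmetrically from $\Box_a\wedge\Diamond_b\le \Diamond_{a\wedge b}$: if $a\in F$ and $b\notin I$ then $\Box_a\wedge\Diamond_b\in G$, whence $\Diamond_{a\wedge b}\in G$, i.e.\ $a\wedge b\notin I$.

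For (ii), the main work, I pass through the universal property of $L^{\Diamond\Box}$. A prime filter $G$ of $L^{\Diamond\Box}$ corresponds to a $\sf DL$-homomorphism $h\colon L^{\Diamond\Box}\to 2$ via $G=h^{-1}(1)$, and by Definition~\ref{polp} such an $h$ corresponds bijectively to a ${\sf DL}^{\Diamond\Box}$-morphism $(f,g)\colon L\to 2$, where $f=h\circ\Diamond$ and $g=h\circ\Box$. I now check that ${\sf DL}^{\Diamond\Box}$-morphisms $L\to 2$ are in bijection with weakly prime pairs, via $F=g^{-1}(1)$ and $I=f^{-1}(0)$. The defining clauses of Definition~\ref{def: DL diamond box} translate term by term: $g$ preserving finite meets and $1$ says $F$ is a filter, $f$ preserving finite joins and $0$ says $I$ is an ideal, the inequality $g(a\vee b)\le g(a)\vee f(b)$ evaluated in $2$ becomes the first weakly prime condition, and $g(a)\wedge f(b)\le f(a\wedge b)$ becomes the second. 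Conversely, for a weakly prime pair $(F,I)$ the characteristic functions $g=\chi_F$ and $f=\chi_{L\setminus I}$ assemble into a ${\sf DL}^{\Diamond\Box}$-morphism $L\to 2$ by the same equivalences. Composing the three bijections, and noting $F=\{a:h(\Box_a)=1\}=\{a:\Box_a\in G\}$ and $I=\{a:h(\Diamond_a)=0\}=\{a:\Diamond_a\notin G\}$, shows $\Lambda$ is a bijection.

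For (iii), since every element of $L^{\Diamond\Box}$ is a lattice combination of the generators $\Box_a$ and $\Diamond_a$, one has $G\subseteq G'$ iff $\Box_a\in G\Rightarrow \Box_a\in G'$ and $\Diamond_a\in G\Rightarrow \Diamond_a\in G'$ for all $a\in L$. In terms of $\Lambda(G)=(F,I)$ and $\Lambda(G')=(F',I')$, the first implication says $F\subseteq F'$ and the second says $I'\subseteq I$, which is exactly $\Lambda(G)\le\Lambda(G')$ in $\W$. For (iv), the computed identities $\Lambda((\Box_a)^+)=\Box_a^\uparrow$, $\Lambda((\Box_a)^-)=\Box_a^\downarrow$, $\Lambda((\Diamond_a)^+)=\Diamond_a^\uparrow$, $\Lambda((\Diamond_a)^-)=\Diamond_a^\downarrow$ are immediate from the definitions of $\Lambda$ and the subbasic sets of $\W$. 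Since $\{(\Box_a)^{\pm},(\Diamond_a)^{\pm}:a\in L\}$ is a subbasis for the Priestley topology on $P(L^{\Diamond\Box})$ (generators of $L^{\Diamond\Box}$ generate the clopens of $P(L^{\Diamond\Box})$ as a Boolean algebra), and the corresponding images are a subbasis for $\W$, the bijection $\Lambda$ is a homeomorphism. The main obstacle is the middle step: matching ${\sf DL}^{\Diamond\Box}$-morphisms $L\to 2$ with weakly prime pairs, as this is where one has to be careful that the two inequality conditions of Definition~\ref{def: DL diamond box} correspond exactly to the two displayed implications defining $\W$.
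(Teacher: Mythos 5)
Your proposal is correct and follows essentially the same route as the paper's proof: both pass from prime filters of $L^{\Diamond\Box}$ to bounded lattice homomorphisms into $2$, use the universal property of $L^{\Diamond\Box}$ (Proposition~\ref{prop: L}) to match these with ${\sf DL}^{\Diamond\Box}$-morphisms $L\to 2$, identify the latter with weakly prime pairs via characteristic functions, and then read off the order-isomorphism and the subbasis correspondence on the generators. Your explicit three-step factorization of the bijection is a slightly cleaner packaging of the same argument, but there is no substantive difference.
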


\begin{proof}
Rather than work directly with prime filters $G$, it is more convenient, and equivalent, to work with their associated bounded lattice homomorphisms $h:L^{\Diamond\Box}\to 2$. Then $\Lambda(G)$ is given by $(F,I)$ where $F=\{a\mid h(\Box_a)=1\}$ and $I=\{a\mid h(\Diamond_a)=0\}$. By the second row of conditions in Definition~\ref{polp} $F$ is a filter, and by the first row $I$ is an ideal. For the further conditions to be weakly prime, let $a\vee b\in F$ and $b\in I$. Then $h(\Box_{a\vee b})=1$ and $h(\Diamond_b)=0$. By the first condition in the third row of conditions $h(\Box_a)=1$, so $a\in F$. The other condition to verify that $(F,I)$ is weakly prime is similar. So $\Lambda(G)$ is a weakly prime pair. 
Since $L^{\Diamond\Box}$ is generated by $\{\Diamond_a,\Box_a\mid a\in L\}$, it follows that any $h:L^{\Diamond\Box}\to 2$ is determined by $(F,I)$, so $\Lambda$ is one-one. Suppose $(F,I)$ is a weakly prime pair. Define $f,g:L\to 2$ by setting $f(a)=0$ iff $a\in I$ and $g(b)=1$ iff $b\in F$. Then $(f,g):L\to 2$ is a morphism in  ${\sf DL}^{\Diamond\Box}$. By Proposition~\ref{prop: L} there is a bounded lattice homomorphism $h:L^{\Diamond\Box}\to 2$ with $h\circ\Diamond = f$ and $h\circ\Box=g$. This homomorphism gives a prime filter $G$ with $\Lambda(G)=(F,I)$. So $\Lambda$ is onto. 

Suppose $G,H$ are prime filters with associated homomorphisms $g,h:L^{\Diamond\Box}\to 2$. Then $G\subseteq H$ iff $g\leq h$ in the usual pointwise order, and since $\{\Diamond_a,\Box_a\mid a\in L\}$ is a generating set of $L^{\Diamond\Box}$, we have $g\leq h$ iff this occurs pointwise on this generating set. Then if $\Lambda(G)=(F,I)$ and $\Lambda(H) = (F',I')$ we have $G\subseteq H$ iff $F\subseteq F'$ and $I'\subseteq I$, hence iff $\Lambda(G)\leq\Lambda(H)$ in the ordering of $\W$. So $\Lambda$ is an order-isomorphism. 

 The definition of $\Lambda$ directly yields $\Lambda(\Box_a^+)=\Box_a^\uparrow$, $\Lambda(\Box_a^-)=\Box_a^\downarrow$, $\Lambda(\Diamond_a^+)=\Diamond_a^\uparrow$ and $\Lambda(\Diamond_a^-)=\Diamond_a^\downarrow$. For instance, if $G$ is a prime filter with $\Lambda(G)=(F,I)$, then $G \in\Box_a^+$ iff $\Box_a\in G$ iff $a\in F$ iff $(F,I)\in\Box_a^\uparrow$. The others are similar. Thus, the bijection $\Lambda$ carries a subbasis of one space to a subbasis of the other, hence is a homeomorphism. 
\end{proof}

For a closed convex set $C$ let $F_C=\{a\mid C\subseteq a^+\}$ and $I_C=\{a\mid C\subseteq a^-\}$. This extends earlier notation used for closed upsets and downsets. It is easy to see that $F_C$ is a filter and $I_C$ is an ideal. For a filter $F$ and ideal $I$ let $C_F=\bigcap\{a^+\mid a\in F\}$ and $C_I=\bigcap\{a^-\mid a\in I\}$. In Proposition~\ref{nmk} we noted that $F\leadsto C_F$ and $C\leadsto F_C$ provide bijections between filters and closed upsets, and $I \leadsto C_I$ and $C \leadsto I_C$ between ideals and closed downsets. For closed convex sets, slightly weaker conditions hold: $F_{C_F}=F$, $I_{C_I}=I$, $C_{F_C}=\up C$, and $C_{I_C}=\down C$. The last two follow from the fact that the upset of a closed set is the intersection of clopen upsets containing it and similarly for the downset.

\begin{proposition} \label{mko}
The maps $\Phi:\W\to\C$ and $\Upsilon:\C\to\W$ given by $\Phi(F,I)=C_F\cap C_I$ and $\Upsilon(C)=(F_C,I_C)$ are mutually inverse order-homeomorphisms. Further, for each $a\in L$ we have $\Phi(\Box_a^\uparrow) = \Box_{a^-}$, $\Phi(\Box_a^\downarrow) = \Diamond_{a^-}$, $\Phi(\Diamond_a^\uparrow) = \Diamond_{a^+}$ and $\Phi(\Diamond_a^\downarrow) = \Box_{a^+}$.
\end{proposition}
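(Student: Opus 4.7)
The plan is to establish in order: (i) $\Phi$ and $\Upsilon$ land in the stated codomains, (ii) $\Phi\circ\Upsilon=\text{id}_\C$ and $\Upsilon\circ\Phi=\text{id}_\W$, (iii) both maps preserve order, and (iv) $\Phi$ sends the subbasis of $\W$ to the subbasis of $\C$ from Definition~\ref{def: C(X)} as claimed, hence is a homeomorphism.

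For (i), $\Phi(F,I)=C_F\cap C_I$ is the intersection of a closed upset with a closed downset, so it is closed and convex. That $\Upsilon(C)=(F_C,I_C)$ satisfies the two weakly prime conditions is a direct consequence of the convexity of $C$: for the first, if $a\vee b\in F_C$ and $b\in I_C$ then $C\subseteq(a^+\cup b^+)\cap b^-\subseteq a^+$, giving $a\in F_C$; the second is the symmetric computation intersecting $C\subseteq a^-\cup b^-$ with $C\subseteq a^+$.

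The crux is $\Upsilon\circ\Phi=\text{id}_\W$, i.e.\ $F_{C_F\cap C_I}=F$ and $I_{C_F\cap C_I}=I$. The inclusions $F\subseteq F_{C_F\cap C_I}$ and $I\subseteq I_{C_F\cap C_I}$ are immediate from the definitions. For the reverse direction I would invoke compactness of $(X,\pi)$: if $a\in F_{C_F\cap C_I}$ then the family of closed sets $\{b^+:b\in F\}\cup\{c^-:c\in I\}\cup\{a^-\}$ has empty intersection, so a finite subfamily does too, and taking $b$ to be a finite meet of elements of $F$ and $c$ a finite join of elements of $I$ yields $b^+\cap c^-\subseteq a^+$. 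This translates to $b\le a\vee c$ in $L$, whence $a\vee c\in F$, and the first weakly prime axiom together with $c\in I$ then forces $a\in F$. The ideal side is the dual calculation producing $b\wedge a\in I$ with $b\in F$, to which the second weakly prime axiom applies. The identity $\Phi\circ\Upsilon=\text{id}_\C$ is then immediate from $C_{F_C}=\up C$, $C_{I_C}=\down C$, and convexity of $C$.

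For (iii), $\Upsilon$ is order-preserving directly from the Egli-Milner definition together with the fact that each $a^+$ is an upset and each $a^-$ is a downset; the converse direction uses $C_{F_C}=\up C$ and $C_{I_C}=\down C$ to translate containments of filters and ideals back into the two Egli-Milner inequalities. For (iv), a one-line computation using (ii) gives $(F,I)\in\Box_a^\uparrow$ iff $a\in F=F_{C_F\cap C_I}$ iff $\Phi(F,I)\subseteq a^+$ iff $\Phi(F,I)\in\Box_{a^-}$, and the three other identities are analogous. Since $\Phi$ is then a bijection carrying a subbasis to a subbasis, it is a homeomorphism. The main obstacle is the compactness-and-weakly-prime argument in (ii); this is the only place where the defining properties of $\W$ are used in an essential way, and it is what forces the pair $(F,I)$ to be recoverable from the single convex set $C_F\cap C_I$.
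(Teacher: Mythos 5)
Your proposal is correct and follows essentially the same route as the paper's proof: the same convexity computation for well-definedness, the same compactness argument (finite meets from $F$, finite joins from $I$) combined with the weakly prime axioms to get $\Upsilon\circ\Phi=\mathrm{id}_\W$, the identity $\up C\cap\down C=C$ for the other composite, and the same subbasis correspondence to conclude homeomorphism. You have also correctly identified the recovery of $(F,I)$ from $C_F\cap C_I$ as the one step where the weakly prime conditions are essential.
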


\begin{proof}
For $(F,I)\in\W$, surely $C_F\cap C_I$ is a closed convex set, and for $C\in\C$ the pair $(F_C,I_C)$ is a filter and ideal pair. To verify that it is a weakly prime pair, suppose $a\vee b\in F_C$ and $b\in I_C$. Then $C\subseteq (a^+\cup b^+)\cap b^- \subseteq a^+$. So $a\in F$ as required. The other condition is verified similarly. Therefore, both maps are well-defined. Using that $C$ is convex, $C_{F_C}\cap C_{I_C}=\up C\cap \down C = C$, so one composite is the identity. To see that the other composite is the identity, set $C=C_F\cap C_I$. We must show that $F_C=F$ and $I_C=I$. If $a\in F$ then $C\subseteq a^+$, hence $a\in F_C$. For the other containment suppose $a\in F_C$. Then $C\subseteq a^+$, giving $C_F\cap C_I\cap a^-=\varnothing$. Since $C_F$ and $C_I$ are down-directed intersections of clopen sets, by compactness there are $c\in F$ and $d\in I$ with $c^+\cap d^-\cap a^-=\varnothing$. Then $c^+\subseteq d^+\cup a^+$. This gives $c\leq d\vee a$. Since $c\in F$ we then have $d\vee a\in F$. But $d\in I$ and $(F,I)$ is a weakly prime pair, so $a\in F$. The argument to show that $I_C= I$ is similar. Thus, $\Phi$ and $\Upsilon$ are mutually inverse bijections. 

Suppose $(F,I)\in\W$ corresponds to $C\in\C$ under these bijections. Then $C\subseteq a^+$ iff $a\in F$ and $C\subseteq a^-$ iff $a\in I$. Therefore, from the correspondence mentioned above, we have $C_F={\uparrow}C$ and $C_I={\down}C$. If $(F',I')\in\W$ corresponds to $C'\in\C$, it follows that $(F,I)\leq (F',I')$ in $\W$ iff $C\sqsubseteq C'$ in the Egli-Milner order of $\C$. So $\Phi$ and $\Upsilon$ are order-isomorphisms. 

Finally, for $a\in L$ we have $\Phi(\Box_a^\uparrow) = \Box_{a^-}$, $\Phi(\Box_a^\downarrow) = \Diamond_{a^-}$, $\Phi(\Diamond_a^\uparrow) = \Diamond_{a^+}$ and $\Phi(\Diamond_a^\downarrow) = \Box_{a^+}$. To see the first statement, suppose $(F,I)\in\W$ corresponds to $C\in\C$. Then $(F,I)\in\Box_a^\uparrow$ iff $a\in F$ iff $C\subseteq a^+$ iff $C\cap a^-=\varnothing$ iff $C\in\Box_{a^-}$. Similarly, $(F,I)\in\Diamond_a^\uparrow$ iff $a\not\in I$ iff $C\not\subseteq a^-$ iff $C\cap a^+\neq\varnothing$ iff $C\in\Diamond_{a^+}$. The other statements follow as they involve complementary sets and $\Phi$ is a bijection. Since the inverse bijections $\Phi$ and $\Upsilon$ carry one subbasis to another, they are homeomorphisms. 
\end{proof}

Combining Propositions~\ref{ghu} and \ref{mko} yields the following (cf.~\cite[Thm.~4.13]{BKR07}). 

\begin{theorem}
There is a map $\Pi:P(L^{\Diamond\Box})\to\C$ from the Priestley space of $L^{\Diamond\Box}$ to the convex closed sets of the Priestley space of $L$ with the Egli-Milner order and weak hit-or-miss topology given by 
\[ \Pi(G)=\bigcap\{a^+\mid \Box_a\in G\}\cap\bigcap\{a^-\mid\Diamond_a\not\in G\} \]
This map is an order-homeomorphism and for each $a\in L$ we have $\Pi(\Box_a^+)=\Box_{a^-}$, $\Pi(\Box_a^-)=\Diamond_{a^-}$, $\Pi(\Diamond_a^+)=\Diamond_{a^+}$ and $\Pi(\Diamond_a^-)=\Box_{a^+}$.
\label{thm 3.24}
\end{theorem}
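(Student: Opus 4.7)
The plan is to simply define $\Pi$ to be the composition $\Phi \circ \Lambda$, where $\Lambda : P(L^{\Diamond\Box}) \to \W$ is the order-homeomorphism of Proposition~\ref{ghu} and $\Phi : \W \to \C$ is the order-homeomorphism of Proposition~\ref{mko}. Since the composition of two order-homeomorphisms is again an order-homeomorphism, this immediately yields that $\Pi$ is an order-homeomorphism from $P(L^{\Diamond\Box})$ to the Priestley space $\C$ of closed convex sets under the Egli-Milner order and the weak hit-or-miss topology.

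The first step is to verify the given formula for $\Pi$. Unfolding the definitions, for a prime filter $G$ of $L^{\Diamond\Box}$, $\Lambda(G) = (F,I)$ with $F = \{a \mid \Box_a \in G\}$ and $I = \{a \mid \Diamond_a \notin G\}$, and then $\Phi(F,I) = C_F \cap C_I = \bigcap\{a^+ \mid a \in F\} \cap \bigcap\{a^- \mid a \in I\}$. Substituting the definitions of $F$ and $I$ gives precisely the stated formula for $\Pi(G)$.

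The second step is to compute the action of $\Pi$ on the subbasic sets $\Box_a^+, \Box_a^-, \Diamond_a^+, \Diamond_a^-$ of $P(L^{\Diamond\Box})$. Proposition~\ref{ghu} supplies $\Lambda(\Box_a^+)=\Box_a^\uparrow$, $\Lambda(\Box_a^-)=\Box_a^\downarrow$, $\Lambda(\Diamond_a^+)=\Diamond_a^\uparrow$, and $\Lambda(\Diamond_a^-)=\Diamond_a^\downarrow$; Proposition~\ref{mko} then supplies $\Phi(\Box_a^\uparrow)=\Box_{a^-}$, $\Phi(\Box_a^\downarrow)=\Diamond_{a^-}$, $\Phi(\Diamond_a^\uparrow)=\Diamond_{a^+}$, and $\Phi(\Diamond_a^\downarrow)=\Box_{a^+}$. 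Composing yields the four identities in the theorem statement.

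There is no real obstacle here; the entire content has been absorbed into Propositions~\ref{ghu} and~\ref{mko}, so all that remains is the bookkeeping of the composition. The only point worth a brief remark is that the formula for $\Pi(G)$, even though written as an intersection indexed by two separate conditions on $G$, is just the transcription of $C_F \cap C_I$ once the filter--ideal pair $(F,I) = \Lambda(G)$ is expanded.
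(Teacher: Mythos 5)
Your proposal is correct and is exactly the paper's argument: the theorem is stated there as an immediate consequence of combining Propositions~\ref{ghu} and~\ref{mko}, i.e., setting $\Pi=\Phi\circ\Lambda$ and composing the formulas and subbasis computations. Nothing is missing.
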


By Priestley duality, the order-homeomorphism $\Pi:P(L^{\Diamond\Box})\to\C$ gives rise to a lattice isomorphism $CU(\Pi):CU(\C)\to CU(P(L^{\Diamond\Box}))$ given by $\Pi^{-1}$. Since $\Pi^{-1}(\Box_{a^-}) =\Box_a^+$ and $\Pi^{-1}(\Diamond_{a^+})=\Diamond_a^+$, composing with the natural isomorphism from $CU(P(L^{\Diamond\Box}))\to L^{\Diamond\Box}$ we have a lattice isomorphism $\mu:CU(\C)\to L^{\Diamond\Box}$ with $\mu(\Box_{a^-})=\Box_a$ and $\mu(\Diamond_{a^+})=\Diamond_a$. Taking the inverse of the isomorphism $\mu$ yields the following.

\begin{corollary}\label{mko1}
There is an isomorphism $\varepsilon: L^{\Diamond\Box}\to CU(\C)$ whose behavior on the generating set $\{\Box_a, \Diamond_a\mid a\in L\}$ of $L^{\Diamond\Box}$ is given by $\varepsilon(\Box_a)=\Box_{a^-}$ and $\varepsilon(\Diamond_a)=\Diamond_{a^+}$.  
\end{corollary}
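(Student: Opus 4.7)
The plan is to obtain $\varepsilon$ as the inverse of the isomorphism $\mu:CU(\C)\to L^{\Diamond\Box}$ that is sketched in the paragraph immediately preceding the corollary; no new ingredients are needed beyond Theorem~\ref{thm 3.24} and Priestley duality, but we should be careful about tracking generators.

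First I would invoke Theorem~\ref{thm 3.24}, which supplies the order-homeomorphism $\Pi:P(L^{\Diamond\Box})\to\C$ together with the explicit values $\Pi(\Box_a^+)=\Box_{a^-}$ and $\Pi(\Diamond_a^+)=\Diamond_{a^+}$ on the sub-basic clopen upsets. Applying the contravariant functor $CU$ of Priestley duality then yields a bounded lattice isomorphism $CU(\Pi):CU(\C)\to CU(P(L^{\Diamond\Box}))$ which is given by inverse image, i.e.\ by $\Pi^{-1}$. Because $\Pi$ is an order-homeomorphism, the sets $\Box_{a^-}$ and $\Diamond_{a^+}$, which sit in the sub-basis of $\C$ described in Definition~\ref{def: C(X)}, are indeed clopen upsets of $\C$, so they are legitimate elements of the domain $CU(\C)$.

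Next I would compose $CU(\Pi)$ with the canonical Priestley-duality isomorphism $\eta:CU(P(L^{\Diamond\Box}))\to L^{\Diamond\Box}$, which sends a clopen upset of the form $b^+$ back to the generator $b\in L^{\Diamond\Box}$. In particular $\eta(\Box_a^+)=\Box_a$ and $\eta(\Diamond_a^+)=\Diamond_a$. Composing gives a bounded lattice isomorphism $\mu=\eta\circ CU(\Pi):CU(\C)\to L^{\Diamond\Box}$ whose values on the generating clopen upsets read off directly from Theorem~\ref{thm 3.24}:
\[
\mu(\Box_{a^-})=\eta(\Pi^{-1}(\Box_{a^-}))=\eta(\Box_a^+)=\Box_a,\qquad \mu(\Diamond_{a^+})=\eta(\Pi^{-1}(\Diamond_{a^+}))=\eta(\Diamond_a^+)=\Diamond_a.
\]

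Finally I would set $\varepsilon=\mu^{-1}$. As the inverse of a bounded lattice isomorphism, $\varepsilon$ is itself a bounded lattice isomorphism $L^{\Diamond\Box}\to CU(\C)$, and the two generator identities above immediately give $\varepsilon(\Box_a)=\Box_{a^-}$ and $\varepsilon(\Diamond_a)=\Diamond_{a^+}$, as required. The only ``obstacle'' is bookkeeping: making sure the direction of the functor $CU$ is taken into account (which is why we must invert at the end) and that we use the right natural isomorphism $\eta$ sending $(-)^+$ on prime filters back to the element of $L^{\Diamond\Box}$. Both are routine once Theorem~\ref{thm 3.24} is in hand.
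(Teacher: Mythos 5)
Your proposal is correct and is essentially identical to the paper's own argument: the paper also forms $CU(\Pi)=\Pi^{-1}$ from Theorem~\ref{thm 3.24}, composes with the canonical isomorphism $CU(P(L^{\Diamond\Box}))\to L^{\Diamond\Box}$ to get $\mu$ with $\mu(\Box_{a^-})=\Box_a$ and $\mu(\Diamond_{a^+})=\Diamond_a$, and defines $\varepsilon=\mu^{-1}$. Your generator bookkeeping matches the paper's exactly.
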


In Sections~\ref{spectral} and~\ref{Priestley} we considered a large number of hyperspaces and their realizations as spectral or Priestley spaces of various distributive lattices. We summarize these results in Table~\ref{table}.

\begin{table}[h!]
  \begin{center}
    \caption{Summary.}
    \label{table}
    \begin{tabular}{c|c|c|c} 
      \textbf{Hyperspace} & \textbf{Realization} & \textbf{Type} & \textbf{Appears}\\
      \hline
      $\F_-^{\Diamond\Box}$ & $P(L^\Box)$ & Priestley & \ref{thm:dual of Lbox} \\
      $\F_-^{\Diamond}$ & $P^-(L^\Box)$ & Spectral & \ref{thm:dual of Lbox} \\
      $\F_-^{\Box}$ & $P^+(L^\Box)$ & Spectral & \ref{thm:dual of Lbox} \\
      \hline
      $\F_+^{\Diamond\Box}$ & $P(L^\Diamond)$ & Priestley & \ref{cor:Ldiamond} \\
      $\F_+^{\Diamond}$ & $P^+(L^\Diamond)$ & Spectral & \ref{cor:Ldiamond} \\
      $\F_+^{\Box}$ & $P^-(L^\Diamond)$ & Spectral & \ref{cor:Ldiamond} \\
      \hline
      $\F^{\Diamond\Box}$ & $P(B_L^{\Diamond}) \cong^d P(B_L^{\Box})$ & Priestley & \ref{cor:free-bool-ext} \\
      $\F^{\Diamond}$ & $P^+(B_L^{\Diamond})$ & Spectral & \ref{cor:free-bool-ext} \\
      $\F^{\Box}$ & $P^+(B_L^{\Box})$ & Spectral & \ref{cor:free-bool-ext} \\
      \hline
      $\C \cong \F^{\Diamond\Box}/{\sim}$ & $P(L^{\Diamond\Box})$ & Priestley & \ref{prop: Conv}, \ref{thm 3.24} \\
          \end{tabular}
  \end{center}
\end{table}

\section{Duality for coalgebras for positive modal logic}

As mentioned in the introduction, there are applications of hyperspace constructions of Priestley spaces to the duality theory for positive modal logic that have been considered by several authors \cite{Pal04,BK07,BKR07,VV14,Lau15,Jakl2017}. In this final section we give a brief account of this, with the focus on the duality of free functor constructions as developed in the previous two sections. 
Recall that in Proposition~\ref{prop: L} we showed that the functor $\mathcal R:{\sf DL}\to{\sf DL}^{\Diamond\Box}$ has a left adjoint $\mathcal L:{\sf DL}^{\Diamond\Box}\to{\sf DL}$ which sends $L$ to $L^{\Diamond\Box}$.

\begin{definition}
Set $\mathcal K=\mathcal L\circ\mathcal R$.
\end{definition}

By Theorem~\ref{prop: Conv}, for a Priestley space $X$, the space $\C(X)$ of convex closed subsets of $X$ with the Egli-Milner order and weak hit-or-miss topology is a Priestley space. Recall that for $A\subseteq X$ we write $A^*$ for the convex hull of $A$. The following appears in \cite[Thm.~13]{VV14} (see also \cite{BK07,Lau15}).

\begin{proposition}
There is an endofunctor $\C:\sf Pries\to Pries$ sending $X$ to $\C(X)$ and $f:X\to Y$ to $\C(f)$ defined by $\C(f)(C)=f(C)^*$ for each $C\in\C(X)$. 
\end{proposition}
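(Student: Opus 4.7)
The plan is to verify in turn that (i) $\C(f)(C)=f(C)^*$ is a closed convex subset of $Y$, (ii) $\C(f)$ is order-preserving with respect to the Egli-Milner order, (iii) $\C(f)$ is continuous in the weak hit-or-miss topologies, and (iv) $\C$ preserves identities and composition. For (i), since $f$ is continuous and $C$ is closed (hence compact), $f(C)$ is compact and therefore closed in the Hausdorff space $Y$. The same appeal to \cite[Prop.~2.3]{BMM02} used in the proof of Theorem~\ref{prop: Conv} shows that $\up f(C)$ and $\down f(C)$ are closed, so $f(C)^* = \down f(C)\cap \up f(C)$ is closed; it is convex since any intersection of a downset and an upset is convex. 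For (ii), suppose $C_1\sqsubseteq C_2$ in $\C(X)$, so $C_1\subseteq \down C_2$ and $C_2\subseteq \up C_1$. Applying $f$ and using its monotonicity, $f(C_1)\subseteq \down f(C_2)$ and $f(C_2)\subseteq \up f(C_1)$, so $f(C_1)\sqsubseteq f(C_2)$. The observation recalled in the excerpt that $A\sqsubseteq B$ iff $A^*\sqsubseteq B^*$ then yields $\C(f)(C_1)\sqsubseteq \C(f)(C_2)$.

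For (iii), it suffices to compute preimages of a subbasis of $\C(Y)$. By Priestley separation, clopen upsets together with clopen downsets form a subbasis of the topology of $Y$, and by Definition~\ref{def: C(X)} the corresponding sets $\Box_U\cap \C(Y)$ and $\Diamond_U\cap \C(Y)$ form a subbasis of $\C(Y)$. The key reduction is that for any clopen upset $U$ of $Y$,
\[
f(C)^*\cap U = \varnothing \iff f(C)\cap U = \varnothing \iff C\cap f^{-1}(U)=\varnothing.
\]
The forward direction of the first equivalence is immediate from $f(C)\subseteq f(C)^*$; for the converse, any $y\in f(C)^*\cap U$ satisfies $y\le f(x)$ for some $x\in C$, and $U$ being an upset forces $f(x)\in U$. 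A symmetric statement holds for clopen downsets, using elements of $f(C)^*$ lying above some image. Since $f$ is continuous and order-preserving, $f^{-1}(U)$ is clopen and is an upset (resp.\ downset) whenever $U$ is an upset (resp.\ downset). Consequently $\C(f)^{-1}(\Box_U\cap\C(Y)) = \Box_{f^{-1}(U)}\cap\C(X)$ and $\C(f)^{-1}(\Diamond_U\cap\C(Y)) = \Diamond_{f^{-1}(U)}\cap\C(X)$, both of which are subbasic open in $\C(X)$.

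For (iv), $\C(\operatorname{id}_X)(C)=C^*=C$ since $C$ is already convex. Given $g:Y\to Z$ in $\sf Pries$, the task reduces to checking $g(f(C)^*)^* = (g\circ f)(C)^*$. The inclusion $(g\circ f)(C)^*\subseteq g(f(C)^*)^*$ follows from $f(C)\subseteq f(C)^*$. For the reverse, monotonicity of $g$ gives $g(f(C)^*)\subseteq \down g(f(C))\cap \up g(f(C))= (g\circ f)(C)^*$, and the right-hand side being already convex lets us take convex hulls on the left. The main obstacle is (iii): it is the only step that simultaneously uses continuity and monotonicity of $f$ together with the precise form of the weak hit-or-miss subbasis, and the reduction to $\Box_{f^{-1}(U)}$ and $\Diamond_{f^{-1}(U)}$ relies essentially on $U$ being an upset or downset. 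Items (i), (ii) and (iv) are then formal consequences of the explicit description $f(C)^*=\down f(C)\cap\up f(C)$ and the fact that $\sqsubseteq$ is preserved by passage to convex hulls.
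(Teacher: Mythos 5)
Your proof is correct. Note that the paper itself does not prove this proposition; it defers to \cite[Thm.~13]{VV14} (see also \cite{BK07,Lau15}), so there is no in-text argument to compare against. Your direct verification uses exactly the ingredients the paper assembles elsewhere: the closedness of ${\uparrow}A$ and ${\downarrow}A$ for closed $A$ via \cite[Prop.~2.3]{BMM02} (as in the proof of Theorem~\ref{prop: Conv}), the equivalence $A\sqsubseteq B$ iff $A^*\sqsubseteq B^*$ stated before Definition~\ref{def: C(X)}, and the subbasis of the weak hit-or-miss topology from that definition (every clopen upset of $P(L)$ is of the form $a^+$ and every clopen downset of the form $a^-$, so your subbasis matches the one given there). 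The key computation $f(C)^*\cap U=\varnothing \iff C\cap f^{-1}(U)=\varnothing$ for $U$ a clopen upset (and its downset analogue), which reduces preimages of subbasic opens to subbasic opens, is sound, and the composition identity $g(f(C)^*)^*=(g\circ f)(C)^*$ is argued correctly in both directions. The result is a self-contained proof of a statement the paper only cites.
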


Using the Priestley duality functors $P:{\sf DL}\to{\sf Pries}$ and $CU:{\sf Pries}\to{\sf DL}$ we have the situation depicted below.

\begin{center}
\begin{tikzcd}[column sep = 5pc] 
{\sf DL} \arrow[r, shift left = .5ex, "P"] \arrow[d, "\mathcal K"'] & {\sf PS} \arrow[d, "\C"] \arrow[l, shift left, "CU"] \\
{\sf DL}  \arrow[r, shift left = .5ex, "P"] & {\sf PS} \arrow[l, shift left, "CU"]
\end{tikzcd}
\end{center}

The following result shows that the endofunctors $\mathcal K$ and $\C$ are dual to each other (see also \cite[Sec.~4.5.1]{Jakl2017}), where we use $F\simeq G$ to indicate that the functors $F$ and $G$ are naturally isomorphic. 

\begin{theorem}\label{thm:Hwp=wpP}
\begin{enumerate}
\item[]
\item $P\circ\mathcal K\simeq\C\circ P$.
\item $\mathcal K\circ CU\simeq CU\circ\C$.
\end{enumerate}
\end{theorem}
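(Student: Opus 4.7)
My plan is to prove part (2) directly from Corollary~\ref{mko1} by using the universal property of the $(-)^{\Diamond\Box}$ construction to define a natural transformation, and then to obtain part (1) from part (2) by Priestley duality. Indeed, applying $P$ and $CU$ to the isomorphism $\mathcal K\circ CU\simeq CU\circ\C$ and exploiting the natural isomorphisms $CU\circ P\simeq\text{id}_{\sf DL}$ and $P\circ CU\simeq\text{id}_{\sf PS}$ converts $\mathcal K\circ CU\simeq CU\circ\C$ into $P\circ\mathcal K\simeq\C\circ P$.

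To construct the natural isomorphism for (2), I fix a Priestley space $X$ and define a candidate $\sf DL^{\Diamond\Box}$-morphism $(f_X,g_X):CU(X)\to CU(\C(X))$ on a clopen upset $U$ of $X$ by setting $f_X(U)=\{C\in\C(X):C\cap U\neq\varnothing\}$ and $g_X(U)=\{C\in\C(X):C\subseteq U\}$. Lemma~\ref{aqw} together with Definition~\ref{def: C(X)} shows these are clopen upsets in $\C(X)$. Direct set-theoretic checks verify that $f_X$ preserves finite joins and $0$, that $g_X$ preserves finite meets and $1$, and that the mixed inequalities $g_X(U\vee V)\le g_X(U)\vee f_X(V)$ and $g_X(U)\wedge f_X(V)\le f_X(U\wedge V)$ hold (for the first, if $C\subseteq U\cup V$ and $C\not\subseteq U$, pick $x\in C\setminus U\subseteq V$; for the second, any $x\in C\cap V$ with $C\subseteq U$ lies in $U\cap V$). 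By Proposition~\ref{prop: L} this induces a unique bounded lattice homomorphism $\eta_X:CU(X)^{\Diamond\Box}\to CU(\C(X))$ with $\eta_X(\Diamond_U)=f_X(U)$ and $\eta_X(\Box_U)=g_X(U)$.

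That $\eta_X$ is an isomorphism is obtained by transporting along the Priestley duality isomorphism $X\cong P(L)$ with $L=CU(X)$: on the generators $\{\Diamond_a,\Box_a:a\in L\}$ of $L^{\Diamond\Box}$, $\eta_X$ agrees with the isomorphism $\varepsilon$ of Corollary~\ref{mko1}, since $\varepsilon(\Box_a)=\Box_{a^-}=\{C:C\subseteq a^+\}=g_X(a^+)$ and $\varepsilon(\Diamond_a)=\Diamond_{a^+}=\{C:C\cap a^+\neq\varnothing\}=f_X(a^+)$. Uniqueness in the universal property forces $\eta_X=\varepsilon$ under this identification, so $\eta_X$ is an isomorphism.

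Naturality of $\eta$ in $X$ is the step requiring the most attention. Given a Priestley morphism $h:X\to Y$ and $U\in CU(Y)$, the two routes around the naturality square send $\Box_U$ to $\{C\in\C(X):C\subseteq h^{-1}(U)\}$ and to $CU(\C(h))(\{D:D\subseteq U\})=\{C:h(C)^*\subseteq U\}$, respectively. The key observation, and the principal calculation in the proof, is that because $U$ is a clopen \emph{upset} of $Y$, $h(C)\subseteq U$ implies $\up h(C)\subseteq U$ and hence $h(C)^*=\down h(C)\cap\up h(C)\subseteq\up h(C)\subseteq U$, while the reverse implication is immediate from $h(C)\subseteq h(C)^*$. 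A symmetric argument for $\Diamond_U$ uses that any $x\in h(C)^*\cap U$ satisfies $x\le z$ for some $z\in h(C)$, and since $U$ is an upset $z\in U$, so $h(C)\cap U\neq\varnothing$. The main obstacle, beyond this bookkeeping, is keeping track that the convex-hull operation implicit in $\C(h)$ is invisible to clopen upsets of $\C(X)$ of the forms $\Box_{U^c}$ and $\Diamond_U$ with $U\in CU(Y)$; once this is isolated, the rest is mechanical.
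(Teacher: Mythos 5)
Your proposal is correct and follows essentially the same route as the paper: the componentwise isomorphism is the one from Corollary~\ref{mko1}, naturality is verified on the generators $\Box_U,\Diamond_U$ via the observation that passing to the convex hull does not change containment in, or intersection with, a clopen upset, and the remaining statement is recovered by composing with the duality equivalences $CU\circ P\simeq \mathrm{id}$ and $P\circ CU\simeq \mathrm{id}$. The only cosmetic difference is that you index the natural transformation by Priestley spaces $X$ (building $\eta_X:\mathcal K(CU(X))\to CU(\C(X))$ from the universal property and then matching it with $\varepsilon$), whereas the paper indexes by lattices $L$ and takes $\varepsilon_L:L^{\Diamond\Box}\to CU(\C(P(L)))$ directly.
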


\begin{proof}
We show there is a natural isomorphism $\varepsilon:\mathcal{K}\to CU\circ\C\circ P$. This provides $\mathcal{K}\simeq CU\circ\C\circ P$. Then applying $P$ on the left to each side and using that $P\circ CU\simeq 1_{\sf PS}$ we have $P\circ\mathcal{K}\simeq \C\circ P$; and applying $CU$ on the right to each side of the original equation and using that $P\circ CU\simeq 1_{\sf PS}$ gives $\mathcal{K}\circ CU\simeq CU\circ\C$. 

For each $L\in\DL$ with Priestley space $P(L)=X$, let $\varepsilon_L:L^{\Diamond\Box}\to CU\circ\C(X)$ be the lattice isomorphism of Corollary~\ref{mko1}. For naturality, suppose $M\in\DL$ with Priestley space $P(M)=Y$, and let $f:L\to M$ be a bounded lattice homomorphism. 
\[
\begin{tikzcd}[column sep = 5pc]
L \arrow[d,"f"]&X&L^{\Diamond\Box} \arrow[d, "\mathcal{K}\!f"'] \arrow[r, "\varepsilon_L"] &  CU\,\C(X) \arrow[d, "CU(\C(Pf))"] \\
M&Y \arrow[u,"Pf"]&M^{\Diamond\Box} \arrow[r, "\varepsilon_M"'] & CU\,\C(Y)
\end{tikzcd}
\]
We must show that the square of bounded lattice homomorphisms at right commutes. For this, it is enough to show that it commutes when applied to generators $\Box_a,\Diamond_a$ of $L^{\Diamond\Box}$. We show this for $\Box_a$; the argument for $\Diamond_a$ is similar. By definition, $\varepsilon_L(\Box_a)=\Box_{a^-}$ and then as $\mathcal K\!f(\Box_a)=\Box_{f(a)}$, we have $\varepsilon_M(\mathcal{K}\!f)(\Box_a)=\Box_{f(a)^-}$. Since $\C(Pf) = Pf(\cdot)^*$ we have $CU(\C(Pf))=(Pf(\cdot)^*)^{-1}$. So $CU(\C(Pf))(\Box_{a^-})$ is equal to 
\[
\{D\in\C(Y)\mid Pf(D)^*\in\Box_{a^-}\}.  
\]
But $Pf(D)^*\in\Box_{a^-}\,$ iff  $\,Pf(D)^*\cap a^-=\varnothing\,$ iff $\,Pf(D)^*\subseteq a^+$. Since $a^+$ is convex, we have $Pf(D)^*\subseteq a^+\,$ iff $\,Pf(D)\subseteq a^+\,$. Therefore, the set above is equal to  
\[
\{D\in\C(Y)\mid Pf(D)\subseteq a^+\}. 
\]
But $Pf(D)\subseteq a^+\,$ iff $\,Pf(y)\in a^+$ for each $y\in D$, which occurs iff $a\in Pf(y)$ for each $y\in D$. Since $Pf(y)=f^{-1}(y)$, we have $a\in Pf(y)$ iff $f(a)\in y$ iff $y \in f(a)^+$. Thus, the sets in the displayed equations are equal to 
\[
\{D\in\C(Y)\mid D \subseteq f(a)^+\}.
\]
This set is $\Box_{f(a)^-}$ as required. 
\end{proof}

The next definition is well known (see, e.g., \cite[Def.~5.37]{AHS06}).

\begin{definition}
Let $\sf{C}$ be a category and $\mathcal T:\sf{C} \to \sf{C}$ an endofunctor on $\sf{C}$. An {\em algebra} for $\mathcal T$ is a pair $(A,f)$ where $A$ is an object of $\sf{C}$ and $f:\mathcal T(A) \to A$ is a $\sf{C}$-morphism. A {\em morphism} between algebras $(A_1,f_1)$ and $(A_2,f_2)$ is a $\sf{C}$-morphism $\alpha : A_1 \to A_2$ such that the following square is commutative.
\[
\begin{tikzcd}[column sep = 5pc]
\mathcal T(A_1) \arrow[d, "f_1"'] \arrow[r, "\mathcal T(\alpha)"] &  \mathcal T(A_2) \arrow[d, "f_2"] \\
A_1 \arrow[r, "\alpha"'] & A_2
\end{tikzcd}
\]
Let ${\sf Alg}(\mathcal T)$ be the category whose objects are algebras for $\mathcal T$ and whose morphisms are morphisms of algebras.
\end{definition}

For the next definition see \cite{Dun95}.

\begin{definition}
A {\em positive modal algebra} is a triple $(L,\Diamond,\Box)$ such that $L\in{\sf DL}$ and $\Diamond,\Box$ are unary functions on $L$ satisfying:
\begin{center}
\begin{tabular}{ll}
$\Diamond 0 = 0$ & $\Diamond a \vee \Diamond b = \Diamond (a \vee b)$ \\
$\Box 1 = 1$ & $\Box a \wedge \Box b = \Box (a \wedge b)$ \\
$\Box (a \vee b) \le \Box a \vee \Diamond b$ \hspace{.5in} & $\Box a \wedge \Diamond b \le \Diamond (a \wedge b)$
\end{tabular}
\end{center}
Let $\sf PMA$ be the category of positive modal algebras and bounded lattice homomorphisms preserving $\Diamond$ and $\Box$.
\end{definition}

There is a close connection between the above definition and Definition~\ref{polp}. Indeed, positive modal algebra structures on $L \in \sf DL$ correspond to bounded lattice homomorphisms from $L^{\Diamond\Box}$ to $L$. 
This is reflected 
in the following lemma, which generalizes a similar result for modal algebras (see, e.g., \cite[Prop.~3.12]{KKV04}). The lemma was first established in \cite[Thm.~4.3]{BKR07} as an instance of a more general categorical result. We briefly sketch a direct proof.

\begin{lemma}\label{lem:alg-pma}
$\sf PMA$ is isomorphic to ${\sf Alg}(\mathcal K)$.
\end{lemma}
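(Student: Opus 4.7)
The plan is to use the universal property of $L^{\Diamond\Box}$ (equivalently, the adjunction $\mathcal L\dashv\mathcal R$ of Proposition~\ref{prop: L}) to exhibit a bijection between bounded lattice homomorphisms $h\colon L^{\Diamond\Box}\to L$ and positive modal algebra structures on $L$, and to check that this bijection extends to morphisms.

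On objects I would argue as follows. Given an algebra $(L,h)$ for $\mathcal K$, define $\Box a = h(\Box_a)$ and $\Diamond a = h(\Diamond_a)$. Since $h$ is a bounded lattice homomorphism and the symbols $\Box_a,\Diamond_a$ satisfy exactly the six defining relations of $L^{\Diamond\Box}$ from Definition~\ref{polp}, their images satisfy the same relations in $L$, which are the six defining axioms of a positive modal algebra. Conversely, given a positive modal algebra $(L,\Diamond,\Box)$, the pair $(\Diamond,\Box)\colon L\to L$ is a morphism in $\sf DL^{\Diamond\Box}$ by the very definition of that category, and so by the universal property of $\mathcal L$ (Proposition~\ref{prop: L}) there is a unique $\sf DL$-morphism $h\colon L^{\Diamond\Box}\to L$ with $h\circ\Diamond = \Diamond$ and $h\circ\Box=\Box$ on generators. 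Since $L^{\Diamond\Box}$ is generated by $\{\Diamond_a,\Box_a\mid a\in L\}$, these two constructions are mutually inverse.

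On morphisms, a $\sf DL$-morphism $\alpha\colon L_1\to L_2$ is a morphism of $\mathcal K$-algebras from $(L_1,h_1)$ to $(L_2,h_2)$ iff $\alpha\circ h_1 = h_2\circ\mathcal K(\alpha)$. Functoriality of $\mathcal K=\mathcal L\circ\mathcal R$, together with how $\mathcal L$ was described in Proposition~\ref{prop: L}, identifies $\mathcal K(\alpha)$ as the unique bounded lattice homomorphism $L_1^{\Diamond\Box}\to L_2^{\Diamond\Box}$ with $\mathcal K(\alpha)(\Box_a)=\Box_{\alpha(a)}$ and $\mathcal K(\alpha)(\Diamond_a)=\Diamond_{\alpha(a)}$. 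Evaluating the commutativity condition on the generators $\Box_a,\Diamond_a$ yields precisely $\alpha(\Box a)=\Box\alpha(a)$ and $\alpha(\Diamond a)=\Diamond\alpha(a)$, i.e.\ the condition that $\alpha$ be a $\sf PMA$-morphism; and conversely, if these hold on generators, the two $\sf DL$-morphisms $\alpha\circ h_1$ and $h_2\circ\mathcal K(\alpha)$ agree on a generating set of $L_1^{\Diamond\Box}$ and hence coincide.

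Finally I would package this as a functor $\Phi\colon{\sf Alg}(\mathcal K)\to{\sf PMA}$ that is the identity on underlying lattices, sending $(L,h)$ to $(L,\Diamond,\Box)$ as above and acting as the identity on morphisms, with inverse given by the universal extension. Functoriality and the isomorphism are essentially immediate once the hom-set correspondences have been verified. The only delicate point to watch is the precise identification of $\mathcal K(\alpha)$ on the generators $\Diamond_a,\Box_a$, which follows from chasing the adjunction $\mathcal L\dashv\mathcal R$ through its unit; this is the only step where care is needed, and it is not really an obstacle but rather bookkeeping.
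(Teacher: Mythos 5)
Your proposal is correct and follows essentially the same route as the paper's own (sketched) proof: both directions use the universal property of $L^{\Diamond\Box}$ from Proposition~\ref{prop: L} to pass between $\mathcal K$-algebra structure maps and positive modal algebra structures, and check the morphism correspondence on the generators $\Box_a,\Diamond_a$. Your treatment of the morphism condition via the identification of $\mathcal K(\alpha)$ on generators is in fact slightly more detailed than the paper's sketch, but it is the same argument.
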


\begin{proof} 
(Sketch).  
Let $(L,\Diamond,\Box) \in {\sf PMA}$. Since $(\Diamond,\Box) : L \to L$ is a morphism in ${\sf DL}^{\Diamond\Box}$ by Proposition~
\ref{prop: L}, there is a unique $\sf DL$-morphism $\tau_{\Diamond\Box} : L^{\Diamond\Box} \to L$ such that $\tau_{\Diamond\Box}(\Diamond_a) = \Diamond a$ and $\tau_{\Diamond\Box}(\Box_a) = \Box a$ for each $a \in L$. Therefore, $(L, \tau_{\Diamond\Box}) \in {\sf Alg}(\mathcal K)$. Moreover, if $f : L \to K$ is a morphism in $\sf PMA$, then $f$ is also a morphism in ${\sf Alg}(\mathcal K)$.
This gives a covariant functor $\mathcal A : {\sf PMA} \to {\sf Alg}(\mathcal K)$.

Conversely, let $L \in {\sf DL}$ and $\tau : \mathcal K(L) \to L$ be a $\sf DL$-morphism. If we define $\Diamond_\tau$ and $\Box_\tau$ on $L$ by $\Diamond_\tau a = \tau(\Diamond_a)$ and $\Box_\tau a = \tau(\Box_a)$, then $(L, \Diamond_\tau,\Box_\tau) \in {\sf PMA}$. Moreover, if $f : L \to K$ is a morphism in ${\sf Alg}(\mathcal K)$, then 
$f$ is also a $\sf PMA$-morphism. This defines a covariant functor $\mathcal M :  {\sf Alg}(\mathcal K) \to {\sf PMA}$.

Finally, if $(L,\Diamond,\Box) \in {\sf PMA}$, then $\Diamond_{\tau_{\Diamond\Box}} = \Diamond$ and $\Box_{\tau_{\Diamond\Box}} = \Box$; and if $(L, \tau) \in {\sf Alg}(\mathcal K)$, then $\tau_{\Diamond_\tau\Box_\tau}  = \tau$.
Thus, the functors $\mathcal A$ and $\mathcal M$ yield an isomorphism of $\sf PMA$ and ${\sf Alg}(\mathcal K)$.
\end{proof}

The notion of a coalgebra for an endofunctor is dual to that of an algebra (see, e.g., \cite[Def.~2.1]{KKV04}). To describe concretely coalgebras for the endofunctor $\C:{\sf PS}\to{\sf PS}$, we recall that a binary relation $R$ on $X$ is {\em point-closed} if $R[x]$ is closed for each $x\in X$. 
For $U\subseteq X$ let 
\[
\Box_R(U)=\{x\in X\mid R[x]\subseteq U\} \ \mbox{ and } \ \Diamond_R(U)=R^{-1}[U].
\]
In \cite{CJ99} Celani and Jansana generalized J\'onsson-Tarski duality to a duality for positive modal algebras. This resulted in the category of ${\sf K}^+$-spaces and p-morphisms between them. The following definitions are due to them, but we prefer to use modal Priestley space instead of ${\sf K}^+$-space as was done in \cite{BK07}.

\begin{definition} \label{def: modal Priestley space}
A {\em modal Priestley space} is a pair $(X,R)$ where $X$ is a Priestley space and $R$ is a binary relation on $X$ such that
\begin{enumerate}
\item $R$ is point-closed.
\item $U$ clopen upset implies $\Box_R U,\Diamond_R U$ are clopen upsets.
\item $R[x]={\uparrow}R[x]\cap {\downarrow}R[x]$.
\end{enumerate}
\end{definition}

\begin{definition}\label{def:p-mor}
Let $(X_1,R_1)$ and $(X_2,R_2)$ be two modal Priestley spaces. A map $f: X_1\to X_2$ is a {\em p-morphism} provided 
\begin{enumerate}
\item $f$ is order-preserving;
\item $x R_1 z$ implies $f(x) R_2 f(z)$;
\item $f(x) R_2 y$ implies there exist $z,z'\in X_1$ such that $x R_1 z,z'$ and $f(z) \le y \le f(z')$.
\end{enumerate}
Let $\sf MPS$ be the category of modal Priestley spaces and continuous p-morphisms between them.
\end{definition}

It is straightforward to check that $f:X_1 \to X_2$ satisfies Conditions (2) and (3) of Definition~\ref{def:p-mor} iff it satisfies the following condition
\begin{equation}
{\uparrow}fR_1[x] \cap {\downarrow} fR_1[x] = R_2 [f(x)]. \tag{$\dagger$}
\end{equation}

The following lemma is an adaptation of the well-known result in coalgebraic modal logic that the category of descriptive frames is isomorphic to the category of coalgebras for the Vietoris endofunctor on Stone spaces (see, e.g., \cite[Thm.~3.9]{KKV04}). A version of it, based on $\F^{\Diamond\Box}/{\sim}$ instead of $\C$, is given in \cite[Thm.~44]{Pal04}. Another version, using the category of bitopological spaces isomorphic to $\sf PS$, is given in \cite{Lau15}. We sketch a proof in our setting.

\begin{lemma}\label{lem:coalg-mps}
$\sf MPS$ is isomorphic to ${\sf Coalg}(\C)$.
\end{lemma}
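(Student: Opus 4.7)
The plan is to construct mutually inverse functors between $\sf MPS$ and ${\sf Coalg}(\C)$, each acting as the identity on underlying Priestley spaces and continuous maps. Given $(X,R)\in{\sf MPS}$, set $\gamma_R:X\to\C(X)$ by $\gamma_R(x)=R[x]$. Conversely, given $(X,\gamma)\in{\sf Coalg}(\C)$, set $R_\gamma=\{(x,y)\mid y\in\gamma(x)\}$. The identities $\gamma_{R_\gamma}=\gamma$ and $R_{\gamma_R}=R$ are immediate, so it remains to verify (a) each construction lands in the target category and (b) morphisms correspond.

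For (a) in the forward direction, $\gamma_R(x)\in\C(X)$ by conditions~(1) and~(3) of Definition~\ref{def: modal Priestley space}. To see that $\gamma_R$ is order-preserving I would prove $x\le y\Rightarrow R[x]\sqsubseteq R[y]$ using condition~(2): if $R[y]\not\subseteq{\uparrow}R[x]$, pick $w\in R[y]\setminus{\uparrow}R[x]$; Priestley separation together with compactness of $R[x]$ produces a clopen upset $U$ with $R[x]\subseteq U$ and $w\notin U$, so $x\in\Box_R U$. Since $\Box_R U$ is an upset by~(2), $y\in\Box_R U$, giving $w\in U$, a contradiction. The companion containment $R[x]\subseteq{\downarrow}R[y]$ follows by a dual separation argument using $\Diamond_R V$ in place of $\Box_R U$. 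For continuity, the weak hit-or-miss subbasis of Definition~\ref{def: C(X)} consists of the sets $\Box_V,\Diamond_V$ with $V$ a clopen upset or downset, and one has $\gamma_R^{-1}(\Box_V\cap\C)=\Box_R(V^c)$ and $\gamma_R^{-1}(\Diamond_V\cap\C)=\Diamond_R V$; condition~(2) makes each of these clopen, either directly or by passing to complements.

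For (a) in the backward direction, $R_\gamma[x]=\gamma(x)$ is closed and convex, establishing conditions~(1) and~(3). For condition~(2), given a clopen upset $U$, I would write $\Diamond_{R_\gamma}U=\gamma^{-1}(\Diamond_U\cap\C)$ and $\Box_{R_\gamma}U=\gamma^{-1}(\Box_{U^c}\cap\C)$. By Lemma~\ref{aqw} both $\Diamond_U$ and $\Box_{U^c}$ are Egli-Milner upsets, and each is clopen in $\C(X)$ since its complement also lies in the subbasis. Continuity and order-preservation of $\gamma$ then force the preimages to be clopen upsets of $X$.

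For (b), a coalgebra morphism $f:(X_1,\gamma_1)\to(X_2,\gamma_2)$ is a continuous order-preserving map satisfying $\C(f)\gamma_1(x)=\gamma_2 f(x)$, i.e., $f(R_1[x])^{*}=R_2[f(x)]$, which is precisely condition~$(\dagger)$; by the remark immediately after Definition~\ref{def:p-mor}, this is equivalent to $f$ being a p-morphism. Hence the two functors are mutual inverses on morphisms as well. The main obstacle is the order-preservation step for $\gamma_R$, where one must turn a single point witnessing failure of an Egli-Milner inequality into a clopen upset certifying membership in $\Box_R$ (respectively, $\Diamond_R$); this is the only place where Priestley separation and the compactness of $R[x]$ are essentially combined. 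Everything else amounts to careful bookkeeping between the two subbases.
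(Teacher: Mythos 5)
Your proposal is correct and follows essentially the same route as the paper's (sketched) proof: the same mutually inverse constructions $R\mapsto\gamma_R$ and $\gamma\mapsto R_\gamma$, the same identification of $\Box_R U$ and $\Diamond_R U$ with preimages of subbasic clopen upsets of $\C(X)$, and the same appeal to $(\dagger)$ for morphisms. The one place you go beyond the paper's sketch is the explicit separation-plus-compactness argument showing $\gamma_R$ is order-preserving for the Egli--Milner order, a detail the paper leaves implicit; your argument for it is correct.
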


\begin{proof}
(Sketch). 
Let $(X,R)$ be a modal Priestley space. Define $\rho_R:X\to\C(X)$ by $\rho_R(x)=R[x]$. It follows from Definition~\ref{def: modal Priestley space} that $\rho_R$ is well defined and continuous. Thus, $(X,\rho_R)$ is a  $\C$-coalgebra. 
If $f:X_1 \to X_2$ is a p-morphism between modal Priestley spaces $(X_1, R_1)$ and $(X_2, R_2)$, then it follows from ($\dagger$) that $f$ is also a morphism between the coalgebras $(X_1,\rho_{R_1})$ and $(X_2, \rho_{R_2})$. 
This defines a covariant functor $\mathcal I : {\sf MPS} \to {\sf Coalg}(\mathcal{P})$. 

Let $(X,\rho)$ be a coalgebra for $\C$. Define $R_\rho$ on $X$ by $R_\rho[x]=\rho(x)$ for each $x\in X$. Since $\rho(x)$ is closed and convex, $(X,R_\rho)$ satisfies Conditions (1) and (3) of Definition~\ref{def: modal Priestley space}. To see Condition (2), for a clopen upset $U$ we have
$\Box_R U = \rho^{-1}(\Box_{U^c})$ and $\Diamond_R U = \rho^{-1}(\Diamond_U)$. This shows that $(X,R_\rho)$ is a modal Priestley space because $\rho$ is continuous and order-preserving and $\Box_{U^c},\Diamond_U$ are clopen upsets of $\C(X)$ (see  Lemma~\ref{aqw}).
If $f$ is a morphism between two coalgebras $(X_1,\rho_1)$ and $(X_2,\rho_2)$, then $f$ is also a p-morphism between the modal Priestley spaces $(X_1,R_{\rho_1})$ and $(X_2, R_{\rho_2})$ since $R_{\rho_1}$ and $R_{\rho_2}$ satisfy $(\dagger)$. 
This defines a covariant functor $\mathcal J : {\sf Coalg}(\C) \to {\sf MPS}$. 

Finally, it is straightforward to see that $R=R_{\rho_R}$ for each $(X,R) \in {\sf MPS}$ and $\rho=\rho_{R_\rho}$ for each $(X,\rho) \in {\sf Coalg}(\C)$. 
Thus, the functors $\mathcal I$ and $\mathcal J$ yield an isomorphism of $\sf MPS$ and ${\sf Coalg}(\C)$.
\end{proof}

As a direct consequence of Theorem~\ref{thm:Hwp=wpP} (see \cite[Thm.~2.5.9]{Jac17} and \cite[Lem.~4.9]{BCM22}), we obtain the following result, which was also obtained in \cite[Sec.~4.3]{BKR07} using the language of spectral spaces. 

\begin{theorem}\label{thm:alg-coalg}
${\sf Alg}(\mathcal K)$ is dually equivalent to ${\sf Coalg}(\C)$.
\end{theorem}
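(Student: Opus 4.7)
The plan is to invoke the general principle from categorical coalgebra that a dual equivalence $P : \mathsf{A} \leftrightarrows \mathsf{B} : CU$ together with endofunctors $T$ on $\mathsf{A}$ and $S$ on $\mathsf{B}$ intertwined by a natural isomorphism $P \circ T \simeq S \circ P$ lifts to a dual equivalence between ${\sf Alg}(T)$ and ${\sf Coalg}(S)$. Since Theorem~\ref{thm:Hwp=wpP} supplies exactly the intertwining natural isomorphisms $\eta : P \circ \mathcal K \to \C \circ P$ and $\theta : \mathcal K \circ CU \to CU \circ \C$ for the Priestley dual equivalence, the task reduces to assembling the lift explicitly.

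First I would define a contravariant functor $\tilde P : {\sf Alg}(\mathcal K) \to {\sf Coalg}(\C)$ by sending an algebra $(L, \alpha : \mathcal K(L) \to L)$ to the coalgebra $(P(L),\, \eta_L \circ P(\alpha))$, whose structure map is the composite
\[
P(L) \xrightarrow{\;P(\alpha)\;} P(\mathcal K(L)) \xrightarrow{\;\eta_L\;} \C(P(L)),
\]
and sending an algebra morphism $f : (L_1, \alpha_1) \to (L_2, \alpha_2)$ to $P(f)$. Naturality of $\eta$ combined with the defining square of $f$ as an algebra morphism shows that $P(f)$ is indeed a coalgebra morphism. Dually, I would define $\widetilde{CU} : {\sf Coalg}(\C) \to {\sf Alg}(\mathcal K)$ by setting $\widetilde{CU}(X, \rho) = (CU(X),\, CU(\rho) \circ \theta_X)$ and $\widetilde{CU}(g) = CU(g)$, with the analogous verification that coalgebra morphisms are sent to algebra morphisms.

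The next step is to show $\tilde P$ and $\widetilde{CU}$ are mutually quasi-inverse. The Priestley duality units and counits $\iota_L : L \to CU(P(L))$ and $\kappa_X : X \to P(CU(X))$ are natural isomorphisms; I would show that these refine to isomorphisms of algebras and coalgebras respectively. The essential input is that $\eta$ and $\theta$ are \emph{mates} under the adjunction $P \dashv CU$, and this is transparent in our setup: the proof of Theorem~\ref{thm:Hwp=wpP} derives both $\eta$ and $\theta$ from the single natural isomorphism $\varepsilon : \mathcal K \to CU \circ \C \circ P$ of Corollary~\ref{mko1} by whiskering with $P$ and $CU$ respectively, so $\theta_X$ is by construction the image under $CU$ of $\eta_{CU(X)}$, conjugated by Priestley duality units. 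From this mate compatibility, a standard diagram chase shows that $\widetilde{CU}\, \tilde P(L,\alpha)$ is isomorphic to $(L,\alpha)$ via $\iota_L$, and symmetrically $\tilde P\, \widetilde{CU}(X,\rho)$ is isomorphic to $(X,\rho)$ via $\kappa_X$.

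The main obstacle is the bookkeeping in this last coherence step: one has to chain together three or four squares built from $\eta$, $\theta$, $\varepsilon$, $\iota$, and $\kappa$ and verify that the composite matches the identity structure map on $\alpha$ (respectively $\rho$). Conceptually this is the familiar mate correspondence for an adjoint pair, and one can either quote \cite[Thm.~2.5.9]{Jac17} or \cite[Lem.~4.9]{BCM22} to dispatch it uniformly, or unfold it by hand using that $\eta$ and $\theta$ both descend from $\varepsilon$. Either way, once this compatibility is in place, functoriality and the 2-out-of-3 property yield that $\tilde P$ and $\widetilde{CU}$ are mutually inverse up to natural isomorphism, establishing the desired dual equivalence.
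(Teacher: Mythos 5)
Your proposal is correct and follows essentially the same route as the paper: the paper's proof is simply the observation that Theorem~\ref{thm:Hwp=wpP} supplies the intertwining natural isomorphisms needed to apply the standard lifting result (citing \cite[Thm.~2.5.9]{Jac17} and \cite[Lem.~4.9]{BCM22}), which is exactly the principle you invoke. Your additional unfolding of the lifted functors $\tilde P$ and $\widetilde{CU}$ and the mate compatibility is a correct elaboration of what those references dispatch.
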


Putting Theorem~\ref{thm:alg-coalg} together with Lemmas~\ref{lem:alg-pma} and~\ref{lem:coalg-mps} yields the following duality theorem of Celani and Jansana \cite{CJ99} (see also Hartonas \cite{Har98}). 

\begin{corollary} 
$\sf PMA$ is dually equivalent to $\sf MPS$.
\end{corollary}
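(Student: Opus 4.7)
The proof will be essentially immediate by composing the three equivalences already established. The plan is to chain together Lemma~\ref{lem:alg-pma}, Theorem~\ref{thm:alg-coalg}, and Lemma~\ref{lem:coalg-mps} in that order.

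More concretely, first I would invoke Lemma~\ref{lem:alg-pma} to get an isomorphism of categories $\sf PMA \cong {\sf Alg}(\mathcal K)$ via the functors $\mathcal A$ and $\mathcal M$ described there. Next, I would apply Theorem~\ref{thm:alg-coalg}, which yields that ${\sf Alg}(\mathcal K)$ is dually equivalent to ${\sf Coalg}(\C)$; this dual equivalence is induced by the Priestley duality functors $P$ and $CU$ together with the natural isomorphisms $P\circ\mathcal K\simeq\C\circ P$ and $\mathcal K\circ CU\simeq CU\circ\C$ from Theorem~\ref{thm:Hwp=wpP}. Finally, Lemma~\ref{lem:coalg-mps} provides an isomorphism ${\sf Coalg}(\C)\cong{\sf MPS}$ via the functors $\mathcal I$ and $\mathcal J$.

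Composing these gives the chain
\[
{\sf PMA} \;\cong\; {\sf Alg}(\mathcal K) \;\simeq^{\mathrm{op}}\; {\sf Coalg}(\C) \;\cong\; {\sf MPS},
\]
which yields the desired dual equivalence between $\sf PMA$ and $\sf MPS$. Since two of the three links are isomorphisms of categories and the middle one is an established dual equivalence, there is essentially no obstacle to overcome; the entire argument is a one-line composition. If anything, the only thing worth remarking on is that the explicit functors realizing the dual equivalence can be read off by composing $\mathcal M\circ(CU\circ\C\circ P)\circ\mathcal I^{-1}$ in one direction and its adjoint in the other, so that a positive modal algebra $(L,\Diamond,\Box)$ corresponds to the modal Priestley space $(P(L),R)$ where $xRy$ iff $y\in R[x]$ is governed by the $\C$-coalgebra structure coming from $(\Diamond,\Box)$ via Corollary~\ref{mko1}.
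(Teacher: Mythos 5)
Your proposal is correct and matches the paper's argument exactly: the corollary is stated immediately after the sentence ``Putting Theorem~\ref{thm:alg-coalg} together with Lemmas~\ref{lem:alg-pma} and~\ref{lem:coalg-mps} yields the following duality theorem,'' which is precisely your chain ${\sf PMA}\cong{\sf Alg}(\mathcal K)$, ${\sf Alg}(\mathcal K)$ dually equivalent to ${\sf Coalg}(\C)$, and ${\sf Coalg}(\C)\cong{\sf MPS}$. Your closing remark about reading off the explicit functors is a harmless elaboration the paper does not spell out.
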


The results 
for the endofunctors $\mathcal K$ and $\C$ involving the $L^{\Diamond\Box}$ construction have obvious analogues for $L^\Diamond$ and $L^\Box$. In particular, they yield coalgebraic proofs of the duality theorems of Goldblatt \cite{Gol89} when his meet- and join-hemimorphisms are unary (see also \cite{CLP91} and \cite{Pet96}). 

Consider the diagram below. The functor $\mathcal K_\Box$ is the composition of the forgetful functor $U:{\sf DL}\to{\sf MS}$ and its left adjoint $L:{\sf MS}\to{\sf DL}$ and $\mathcal K_\Diamond$ is the composition of the forgetful functor $U:{\sf DL}\to{\sf JS}$ and its left adjoint $L:{\sf JS}\to{\sf DL}$.
It is not difficult to see that there are endofunctiors $\mathcal V_+$ and $\mathcal V_-$ on $\sf PS$ that on objects take a Priestley space $(X,\pi,\le)$ to $\mathcal F^{\Diamond\Box}_-$ and $\mathcal F^{\Diamond\Box}_+$ respectively. We follow the standard practice in calling $\mathcal V_+$ the {\em upper Vietoris endofunctor} and $\mathcal V_-$ the {\em lower Vietoris endofunctor}.   
\vspace{1ex}

\begin{center}
\begin{tikzcd}[column sep = 5pc] 
{\sf DL} \arrow[r, shift left = .5ex, "P"] \arrow[d, "\mathcal K_\Box"'] & {\sf PS} \arrow[d, "\mathcal V_+"] \arrow[l, shift left, "CU"] \\
{\sf DL}  \arrow[r, shift left = .5ex, "P"] & {\sf PS} \arrow[l, shift left, "CU"]
\end{tikzcd}
\quad\quad\quad\quad
\begin{tikzcd}[column sep = 5pc] 
{\sf DL} \arrow[r, shift left = .5ex, "P"] \arrow[d, "\mathcal K_\Diamond"'] & {\sf PS} \arrow[d, "\mathcal V_-"] \arrow[l, shift left, "CU"] \\
{\sf DL}  \arrow[r, shift left = .5ex, "P"] & {\sf PS} \arrow[l, shift left, "CU"]
\end{tikzcd}
\end{center}
\vspace{1ex}

A result similar to Theorem~\ref{thm:Hwp=wpP} yields that these diagrams commute up to natural isomorphism. Therefore, the category ${\sf Alg}(\mathcal K_\Box)$ of algebras for $\mathcal K_\Box$ is dually equivalent to the category ${\sf Coalg}({\mathcal V_+})$ of coalgebras for $\mathcal V_+$ and ${\sf Alg}(\mathcal K_\Diamond)$ is dually equivalent to ${\sf Coalg}({\mathcal V_-})$. We next describe these categories of algebras and coalgebras.  

\begin{definition}
A {\em positive $\Box$-algebra} is a pair $(L,\Box)$ such that $L\in{\sf DL}$ and $\Box$ is a unary function on $L$ satisfying $\Box 1 = 1$ and $\Box a \wedge \Box b = \Box (a \wedge b)$. Let $\sf PMA^\Box$ be the category of positive $\Box$-algebras and bounded lattice homomorphisms preserving $\Box$.
\end{definition}

As a consequence of Lemma~\ref{lem:alg-pma} we obtain that $\sf PMA^\Box$ is isomorphic to ${\sf Alg}(\mathcal K_\Box)$.

\begin{definition}
A {\em modal $\Box$-Priestley space} is a pair $(X,R_\Box)$ where $X$ is a Priestley space and $R_\Box$ is a binary relation on $X$ such that $R_\Box(x)$ is a closed upset for each $x\in X$ and $\Box_R U$ is a clopen upset for each clopen upset $U$ of $X$.
\end{definition}

\begin{definition}
Let $(X,R_\Box)$ and $(X',R_\Box')$ be two modal $\Box$-Priestley spaces. A map $f: X\to X'$ is a {\em p-morphism} provided 
\begin{enumerate}
\item $f$ is order-preserving;
\item $xR_\Box z$ implies $f(x)R_\Box'f(z)$;
\item $f(x)R_\Box'y$ implies there is $z\in X$ such that $xR_\Box z$ and $f(z) \le y$.
\end{enumerate}
Let $\sf MPS^\Box$ be the category of modal $\Box$-Priestley spaces and continuous p-morphisms between them.
\end{definition}

An argument similar to Lemma~\ref{lem:coalg-mps} yields that $\sf MPS^\Box$ is isomorphic to ${\sf Coalg}(\mathcal V_+)$ (see also \cite[Sec.~4.2]{BKR07}). This provides a coalgebraic proof of the following duality theorem of Goldblatt \cite{Gol89}: 

\begin{theorem} 
$\sf PMA^\Box$ is dually equivalent to $\sf MPS^\Box$.
\end{theorem}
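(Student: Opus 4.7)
The plan is to mimic exactly the argument that produced the Celani--Jansana duality $\sf PMA\simeq^{\rm op} MPS$ in the preceding paragraphs, but now applied to the pair $(\mathcal K_\Box,\mathcal V_+)$ in place of $(\mathcal K,\C)$. The three ingredients I need are: (i) an isomorphism of categories ${\sf PMA}^\Box\cong{\sf Alg}(\mathcal K_\Box)$; (ii) an isomorphism ${\sf MPS}^\Box\cong{\sf Coalg}(\mathcal V_+)$; and (iii) a dual equivalence ${\sf Alg}(\mathcal K_\Box)\simeq^{\rm op}{\sf Coalg}(\mathcal V_+)$. Stringing these together yields the theorem.

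For (i), a positive $\Box$-algebra structure on $L\in\sf DL$ is the same as a unital meet-semilattice homomorphism $L\to L$, which by the universal property of Definition~\ref{def: Lbox} (Theorem~\ref{thm:Dfg} and Proposition~\ref{prop: L} in the $\Box$-case) is the same as a $\sf DL$-morphism $\tau:L^\Box\to L$, i.e.\ a $\mathcal K_\Box$-algebra structure. On morphisms, a $\sf DL$-map $f:L\to K$ preserves $\Box$ iff the obvious square with $\tau_\Box^L$ and $\tau_\Box^K$ commutes, which is exactly the algebra-morphism condition. This is a verbatim copy of Lemma~\ref{lem:alg-pma}.

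For (ii), I would use the description of $\mathcal V_+ X=\F_-^{\Diamond\Box}$ as the space of closed upsets of $(X,\pi,\le)$ (Theorem~\ref{them:F-}). Given $(X,R_\Box)\in{\sf MPS}^\Box$, the defining clauses force $R_\Box[x]$ to be a closed upset, so $\rho_\Box(x):=R_\Box[x]$ lands in $\mathcal V_+ X$; continuity and order-preservation follow from the clopen-upset clause, since $\rho_\Box^{-1}(\Box_{a^-})=\Box_{R_\Box}(a^+)$ and similarly for $\Diamond$-subbasic opens of $\F_-^{\Diamond\Box}$, together with Lemma~\ref{rew}. Conversely, a coalgebra $\rho:X\to\mathcal V_+ X$ determines $R_\rho$ by $R_\rho[x]=\rho(x)$, and the three conditions on a modal $\Box$-Priestley space come from $\rho(x)$ being a closed upset and from pulling back subbasic clopens. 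The p-morphism condition of the $\Box$-case drops condition (3) to its upward half, which corresponds precisely to the fact that $\mathcal V_+$-morphism $f:X_1\to X_2$ satisfies ${\uparrow}fR_1[x]=R_2[f(x)]$ instead of the full two-sided $(\dagger)$.

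For (iii), I first need the analogue of Theorem~\ref{thm:Hwp=wpP}, namely $P\circ\mathcal K_\Box\simeq\mathcal V_+\circ P$ and $\mathcal K_\Box\circ CU\simeq CU\circ \mathcal V_+$. The natural isomorphism on objects is exactly Theorem~\ref{thm:dual of Lbox}: $P(L^\Box)\cong^d\F_-^{\Diamond\Box}=\mathcal V_+(P(L))$, mediated by the map $\Gamma$ of Proposition~\ref{nmk}. Naturality in $L$ is a routine check using that both $\mathcal K_\Box f$ and $\mathcal V_+(Pf)$ are determined by how they send the generators/basic clopens, which I would verify on the subbasic sets $a^\uparrow$ using $\Gamma(a^\uparrow)=\Box_{a^-}$ (Lemma~\ref{rew}). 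Once this commuting square is in hand, the standard abstract result cited in the proof of Theorem~\ref{thm:alg-coalg} (\cite[Thm.~2.5.9]{Jac17}, \cite[Lem.~4.9]{BCM22}) delivers the dual equivalence ${\sf Alg}(\mathcal K_\Box)\simeq^{\rm op}{\sf Coalg}(\mathcal V_+)$.

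The main obstacle I anticipate is verifying (ii) carefully: the subtlety, relative to the convex-set case, is confirming that the p-morphism condition in Definition~\ref{def:p-mor}'s $\Box$-version is precisely the coalgebra-morphism condition for $\mathcal V_+$. In the convex setting the condition $(\dagger)$ has a symmetric form because $\C(f)=f(\cdot)^*$ involves both $\uparrow$ and $\downarrow$; here $\mathcal V_+(f)$ should be $C\mapsto {\uparrow}f(C)$ (closed-upset saturation of the image), which matches clauses (1)--(3) of the $\sf MPS^\Box$ p-morphism definition exactly. Confirming that this is the correct action of $\mathcal V_+$ on morphisms, and that it gives a well-defined endofunctor, is the bookkeeping step that needs to be nailed down; everything else is a direct port of the arguments already given.
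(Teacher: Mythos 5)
Your proposal is correct and follows essentially the same route as the paper: the paper also obtains the theorem by combining ${\sf PMA}^\Box\cong{\sf Alg}(\mathcal K_\Box)$ (as a consequence of the argument of Lemma~\ref{lem:alg-pma}), ${\sf MPS}^\Box\cong{\sf Coalg}(\mathcal V_+)$ (by an argument similar to Lemma~\ref{lem:coalg-mps}, with $\rho_{R_\Box}(x)=R_\Box[x]$ and the one-sided version of $(\dagger)$), and the dual equivalence ${\sf Alg}(\mathcal K_\Box)\simeq^{\rm op}{\sf Coalg}(\mathcal V_+)$ coming from the commuting square $P\circ\mathcal K_\Box\simeq\mathcal V_+\circ P$ established as in Theorem~\ref{thm:Hwp=wpP} via Theorem~\ref{thm:dual of Lbox}. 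The details you flag as needing care (the action of $\mathcal V_+$ on morphisms as $C\mapsto{\uparrow}f(C)$ and its match with the $\Box$-p-morphism clauses) are exactly the bookkeeping the paper leaves implicit, and your treatment of them is sound.
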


\begin{definition}
A {\em positive $\Diamond$-algebra} is a pair $(L,\Diamond)$ such that $L\in{\sf DL}$ and $\Diamond$ is a unary function on $L$ satisfying $\Diamond 0 = 0$ and $\Diamond a \vee \Diamond b = \Diamond (a \vee b)$. Let $\sf PMA^\Diamond$ be the category of positive $\Diamond$-algebras and bounded lattice homomorphisms preserving $\Diamond$.
\end{definition}

As a consequence of Lemma~\ref{lem:alg-pma} we obtain that $\sf PMA^\Diamond$ is isomorphic to ${\sf Alg}(\mathcal K_\Diamond)$.
The definition of $\Diamond$-Priestley spaces and the category $\sf MPS_\Diamond$ is similar to the above.

\begin{definition}
A {\em modal $\Diamond$-Priestley space} is a pair $(X,R_\Diamond)$ where $X$ is a Priestley space and $R_\Diamond$ is a binary relation on $X$ such that $R_\Diamond(x)$ is a closed downset for each $x\in X$ and $\Diamond_R U$ is a clopen upset for each clopen upset $U$ of $X$.
\end{definition}

\begin{definition}
Let $(X,R_\Diamond)$ and $(X',R_\Diamond')$ be two modal $\Diamond$-Priestley spaces. A map $f: X\to X'$ is a {\em p-morphism} provided 
\begin{enumerate}
\item $f$ is order-preserving;
\item $xR_\Diamond z$ implies $f(x)R_\Diamond'f(z)$;
\item $f(x)R_\Diamond'y$ implies there exists $z\in X$ such that $xR_\Diamond z$ and $y \le f(z)$.
\end{enumerate}
Let $\sf MPS^\Diamond$ be the category of modal $\Diamond$-Priestley spaces and continuous p-morphisms between them.
\end{definition}

An argument similar to Lemma~\ref{lem:coalg-mps} yields that $\sf MPS^\Diamond$ is isomorphic to ${\sf Coalg}(\mathcal V_-)$ (see also \cite[Sec.~4.1]{BKR07}). This provides a coalgebraic proof of the following duality theorem of Goldblatt \cite{Gol89} (see also \cite{CLP91} and \cite{Pet96}): 

\begin{theorem} 
$\sf PMA^\Diamond$ is dually equivalent to $\sf MPS^\Diamond$.
\end{theorem}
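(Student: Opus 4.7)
The plan is to assemble the theorem from three pieces already built (or sketched) in the excerpt, exactly parallel to the route that produced the Celani–Jansana duality as Corollary after Theorem~\ref{thm:alg-coalg}. First I would invoke the statement, noted just before the theorem, that $\sf PMA^\Diamond$ is isomorphic to ${\sf Alg}(\mathcal K_\Diamond)$; this is the join-semilattice analogue of Lemma~\ref{lem:alg-pma}. Explicitly, by the universal property of $\mathcal K_\Diamond = \mathcal L\circ U$ with $U:{\sf DL}\to{\sf JSLat}_0$ and $\mathcal L:{\sf JSLat}_0\to{\sf DL}$ free, every $\sf DL$-morphism $\tau_\Diamond:\mathcal K_\Diamond(L)\to L$ corresponds bijectively to a unary operation $\Diamond_\tau$ on $L$ with $\Diamond_\tau 0=0$ and $\Diamond_\tau(a\vee b)=\Diamond_\tau a\vee \Diamond_\tau b$, and morphisms in the two categories agree.

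Second, I would establish $\sf MPS^\Diamond \cong {\sf Coalg}(\mathcal V_-)$ by the obvious adaptation of Lemma~\ref{lem:coalg-mps}. Given $(X,R_\Diamond)\in \sf MPS^\Diamond$, define $\rho_{R_\Diamond}:X\to\mathcal V_-(X)=\F_+^{\Diamond\Box}$ by $\rho_{R_\Diamond}(x)=R_\Diamond[x]$; the axioms ($R_\Diamond[x]$ closed downset, $\Diamond_{R_\Diamond}U$ clopen upset) correspond, through the basic subbasis $\Box_{a^-}, \Diamond_{a^+}$ of $\F_+^{\Diamond\Box}$ coming from Corollary~\ref{cor:Ldiamond}, precisely to well-definedness and continuity of $\rho_{R_\Diamond}$. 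Conversely, any $\mathcal V_-$-coalgebra $\rho$ defines $R_\rho$ by $R_\rho[x]=\rho(x)$. The p-morphism condition (3) matches the coalgebra-morphism square via the single-sided analogue of ($\dagger$): ${\downarrow}fR_\Diamond[x]=R_\Diamond'[f(x)]$.

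Third, I would establish the analogue of Theorem~\ref{thm:Hwp=wpP}, namely $P\circ \mathcal K_\Diamond \simeq \mathcal V_-\circ P$ and $\mathcal K_\Diamond\circ CU \simeq CU\circ\mathcal V_-$. The natural isomorphism $\varepsilon_L:\mathcal K_\Diamond(L)=L^\Diamond\to CU(\mathcal V_-(P(L)))$ on the generator $\Diamond_a$ sends it to $\Diamond_{a^+}\subseteq \F_+^{\Diamond\Box}$; this is an isomorphism by Corollary~\ref{cor:Ldiamond}. Naturality in $f:L\to M$ is checked exactly as in the proof of Theorem~\ref{thm:Hwp=wpP}, using that $\mathcal V_-(Pf)(D)={\downarrow}Pf(D)$ on closed downsets, combined with the identity $D\subseteq f(a)^+\iff Pf(D)\subseteq a^+\iff {\downarrow}Pf(D)\cap a^+\neq\varnothing$ handled dually.

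Once these three ingredients are in place, the theorem is immediate: by the general fact that a pair of endofunctors commuting (up to natural isomorphism) with a dual equivalence induces a dual equivalence between the corresponding algebra and coalgebra categories (the citation to \cite[Thm.~2.5.9]{Jac17} and \cite[Lem.~4.9]{BCM22} used for Theorem~\ref{thm:alg-coalg} applies verbatim), we deduce ${\sf Alg}(\mathcal K_\Diamond)$ is dually equivalent to ${\sf Coalg}(\mathcal V_-)$. Chaining the isomorphisms from the first two steps with this dual equivalence yields $\sf PMA^\Diamond$ is dually equivalent to $\sf MPS^\Diamond$. The only step requiring genuine bookkeeping rather than pure invocation is the verification of the natural isomorphism $\varepsilon_L$ in step three, since one must track the asymmetry (closed downsets versus upsets, $\Diamond_{a^+}$ versus $\Box_{a^-}$) coming from passing from $L^{\Diamond\Box}$ to $L^\Diamond$; this is where I expect the bulk of the work to lie, though it is entirely analogous to the $\Box\Diamond$ case already treated.
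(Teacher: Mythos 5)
Your proposal follows the paper's route exactly: the paper likewise assembles this theorem from the isomorphisms ${\sf PMA}^\Diamond\cong{\sf Alg}(\mathcal K_\Diamond)$ and ${\sf MPS}^\Diamond\cong{\sf Coalg}(\mathcal V_-)$ (obtained by adapting Lemma~\ref{lem:alg-pma} and Lemma~\ref{lem:coalg-mps}), the analogue of Theorem~\ref{thm:Hwp=wpP} for the pair $(\mathcal K_\Diamond,\mathcal V_-)$, and the general ${\sf Alg}$/${\sf Coalg}$ duality principle, so the structure is correct and matches the paper's (itself only sketched) argument. Two bookkeeping slips should be fixed: the standard subbasis of $\F_+^{\Diamond\Box}$ consists of the complementary pairs $\Diamond_{a^+}$ and $\Box_{a^+}$, not $\Box_{a^-}$ and $\Diamond_{a^+}$; and in your third step the displayed chain of equivalences conflates the containment computation appropriate to $\Box$ with the one needed for $\Diamond$ --- the correct chain is $D\cap f(a)^+\neq\varnothing\iff Pf(D)\cap a^+\neq\varnothing\iff{\downarrow}Pf(D)\cap a^+\neq\varnothing$, the last step using that $a^+$ is an upset. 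Neither slip affects the viability of the argument.
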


\section*{Acknowledgements}

We are thankful to Tom\'{a}\v{s} Jakl for sharing his knowledge of the subject with us, including pointers to the literature. 
We are also grateful to the referees for constructive comments. 

\bibliographystyle{amsplain}
\bibliography{bibliography}

\providecommand{\bysame}{\leavevmode\hbox to3em{\hrulefill}\thinspace}
\providecommand{\MR}{\relax\ifhmode\unskip\space\fi MR }
\providecommand{\MRhref}[2]{%
  \href{http://www.ams.org/mathscinet-getitem?mr=#1}{#2}
}
\providecommand{\href}[2]{#2}
\begin{thebibliography}{10}

\bibitem{Abr88}
S.~Abramsky, \emph{A {C}ook's tour of the finitary non-well-founded sets},
  Invited Lecture at BCTCS, 1988. Available at ar{X}iv:1111.7148.

\bibitem{Abr91}
\bysame, \emph{Domain theory in logical form}, vol.~51, 1991, Second Annual
  IEEE Symposium on Logic in Computer Science (Ithaca, NY, 1987), pp.~1--77.

\bibitem{AJ94}
S.~Abramsky and A.~Jung, \emph{Domain theory}, Handbook of logic in computer
  science, {V}ol. 3, Handb. Log. Comput. Sci., vol.~3, Oxford Univ. Press, New
  York, 1994, pp.~1--168.

\bibitem{AHS06}
J.~Ad\'{a}mek, H.~Herrlich, and G.~E. Strecker, \emph{Abstract and concrete
  categories: the joy of cats}, Repr. Theory Appl. Categ. (2006), no.~17,
  1--507, Reprint of the 1990 original [Wiley, New York; MR1051419].

\bibitem{BD74}
R.~Balbes and P.~Dwinger, \emph{Distributive lattices}, University of Missouri
  Press, Columbia, Mo., 1974.

\bibitem{BBGK10}
G.~Bezhanishvili, N.~Bezhanishvili, D.~Gabelaia, and A.~Kurz,
  \emph{Bitopological duality for distributive lattices and {H}eyting
  algebras}, Math. Structures Comput. Sci. \textbf{20} (2010), no.~3, 359--393.

\bibitem{BCM22}
G.~Bezhanishvili, L.~Carai, and P.~J. Morandi, \emph{Duality for powerset
  coalgebras}, Log. Methods Comput. Sci. \textbf{18} (2022), no.~1, Paper No.
  27, 17.

\bibitem{BMM02}
G.~Bezhanishvili, R.~Mines, and P.~J. Morandi, \emph{The {P}riestley separation
  axiom for scattered spaces}, Order \textbf{19} (2002), no.~1, 1--10.

\bibitem{BK07}
N.~Bezhanishvili and A.~Kurz, \emph{Free modal algebras: a coalgebraic
  perspective}, Algebra and coalgebra in computer science, Lecture Notes in
  Comput. Sci., vol. 4624, Springer, Berlin, 2007, pp.~143--157.

\bibitem{BKR07}
M.~M. Bonsangue, A.~Kurz, and I.~M. Rewitzky, \emph{Coalgebraic representations
  of distributive lattices with operators}, Topology Appl. \textbf{154} (2007),
  no.~4, 778--791.

\bibitem{CJ99}
S.~Celani and R.~Jansana, \emph{Priestley duality, a {S}ahlqvist theorem and a
  {G}oldblatt-{T}homason theorem for positive modal logic}, Log. J. IGPL
  \textbf{7} (1999), no.~6, 683--715.

\bibitem{CLP91}
R.~Cignoli, S.~Lafalce, and A.~Petrovich, \emph{Remarks on {P}riestley duality
  for distributive lattices}, Order \textbf{8} (1991), no.~3, 299--315.

\bibitem{Dun95}
J.~M. Dunn, \emph{Positive modal logic}, Studia Logica \textbf{55} (1995),
  no.~2, 301--317.

\bibitem{Eng89}
R.~Engelking, \emph{General topology}, second ed., Sigma Series in Pure
  Mathematics, vol.~6, Heldermann Verlag, Berlin, 1989.

\bibitem{Fel62}
J.~M.~G. Fell, \emph{A {H}ausdorff topology for the closed subsets of a locally
  compact non-{H}ausdorff space}, Proc. Amer. Math. Soc. \textbf{13} (1962),
  472--476.

\bibitem{GHKLMS03}
G.~Gierz, K.~H. Hofmann, K.~Keimel, J.~D. Lawson, M.~Mislove, and D.~S. Scott,
  \emph{Continuous lattices and domains}, Cambridge University Press,
  Cambridge, 2003.

\bibitem{Gol89}
R.~Goldblatt, \emph{Varieties of complex algebras}, Ann. Pure Appl. Logic
  \textbf{44} (1989), no.~3, 173--242.

\bibitem{Har98}
C.~Hartonas, \emph{Duality for modal {$\mu$}-logics}, Theoret. Comput. Sci.
  \textbf{202} (1998), no.~1-2, 193--222.

\bibitem{HP79}
M.~C.~B. Hennessy and G.~D. Plotkin, \emph{Full abstraction for a simple
  parallel programming language}, Mathematical foundations of computer science,
  1979 ({P}roc. {E}ighth {S}ympos., {O}lomouc, 1979), Lecture Notes in Comput.
  Sci., vol.~74, Springer, Berlin-New York, 1979, pp.~108--120.

\bibitem{Hof85}
R.-E. Hoffmann, \emph{The {F}ell compactification revisited}, Continuous
  lattices and their applications ({B}remen, 1982), Lecture Notes in Pure and
  Appl. Math., vol. 101, Dekker, New York, 1985, pp.~57--116.

\bibitem{Jac17}
B.~Jacobs, \emph{Introduction to coalgebra}, Cambridge Tracts in Theoretical
  Computer Science, vol.~59, Cambridge University Press, Cambridge, 2017.

\bibitem{Jakl2017}
T.~Jakl, \emph{d-{F}rames as algebraic duals of bitopological spaces}, Ph.D.
  thesis, Charles University and University of Birmingham, 2017.

\bibitem{Joh82}
P.~T. Johnstone, \emph{Stone spaces}, Cambridge University Press, Cambridge,
  1982.

\bibitem{Joh85}
\bysame, \emph{Vietoris locales and localic semilattices}, Continuous lattices
  and their applications ({B}remen, 1982), Lecture Notes in Pure and Appl.
  Math., vol. 101, Dekker, New York, 1985, pp.~155--180.

\bibitem{KKV04}
C.~Kupke, A.~Kurz, and Y.~Venema, \emph{Stone coalgebras}, Theoret. Comput.
  Sci. \textbf{327} (2004), no.~1-2, 109--134.

\bibitem{Lau15}
F.M. Lauridsen, \emph{Bitopological vietoris spaces and positive modal logic},
  Master's thesis, University of Amsterdam, 2015, {ILLC} Master of Logic
  Thesis.

\bibitem{Law11}
J.~Lawson, \emph{Stably compact spaces}, Math. Structures Comput. Sci.
  \textbf{21} (2011), no.~1, 125--169.

\bibitem{Mac98}
S.~Mac~Lane, \emph{Categories for the working mathematician}, second ed.,
  Springer-Verlag, New York, 1998.

\bibitem{Massas2016}
G.~Massas, \emph{Possibility spaces, {Q}-completions and {R}asiowa-{S}ikorski
  lemmas for non-classical logics}, Master's thesis, University of Amsterdam,
  2016, {ILLC} Master of Logic Thesis.

\bibitem{Mat75}
G.~Matheron, \emph{Random sets and integral geometry}, Wiley Series in
  Probability and Mathematical Statistics, John Wiley \& Sons, New
  York-London-Sydney, 1975.

\bibitem{Pal04}
A.~Palmigiano, \emph{A coalgebraic view on positive modal logic}, Theoret.
  Comput. Sci. \textbf{327} (2004), no.~1-2, 175--195.

\bibitem{Pet96}
A.~Petrovich, \emph{Distributive lattices with an operator}, Studia Logica
  \textbf{56} (1996), no.~1-2, 205--224.

\bibitem{Pri70}
H.~A. Priestley, \emph{Representation of distributive lattices by means of
  ordered {S}tone spaces}, Bull. London Math. Soc. \textbf{2} (1970), 186--190.

\bibitem{Pri72}
\bysame, \emph{Ordered topological spaces and the representation of
  distributive lattices}, Proc. London Math. Soc. \textbf{24} (1972), 507--530.

\bibitem{Pri84}
\bysame, \emph{Ordered sets and duality for distributive lattices}, Orders:
  description and roles (L'Arbresle, 1982), North-Holland Math. Stud., vol.~99,
  North-Holland, Amsterdam, 1984, pp.~39--60.

\bibitem{Smy83}
M.~B. Smyth, \emph{Power domains and predicate transformers: a topological
  view}, Automata, languages and programming ({B}arcelona, 1983), Lecture Notes
  in Comput. Sci., vol. 154, Springer, Berlin, 1983, pp.~662--675.

\bibitem{Smy91}
\bysame, \emph{Totally bounded spaces and compact ordered spaces as domains of
  computation}, Topology and category theory in computer science ({O}xford,
  1989), Oxford Sci. Publ., Oxford Univ. Press, New York, 1991, pp.~207--229.

\bibitem{Sto37b}
M.~Stone, \emph{Topological representation of distributive lattices and
  {B}rouwerian logics}, \v{C}asopis pe\v{s}t. mat. fys. \textbf{67} (1937),
  1--25.

\bibitem{VV14}
Y.~Venema and J.~Vosmaer, \emph{Modal logic and the {V}ietoris functor}, Leo
  {E}sakia on duality in modal and intuitionistic logics (G.~Bezhanishvili,
  ed.), Outst. Contrib. Log., vol.~4, Springer, Dordrecht, 2014, pp.~119--153.

\bibitem{Vic89}
S.~Vickers, \emph{Topology via logic}, Cambridge Tracts in Theoretical Computer
  Science, vol.~5, Cambridge University Press, Cambridge, 1989.

\end{thebibliography}

\end{document}